\theoremstyle{plain}
\newtheorem{theorem}{Theorem}
\newtheorem{lemma}{Lemma}
\newtheorem{corollary}{Corollary}
\newtheorem{proposition}{Proposition}
\theoremstyle{definition}
\newtheorem{problem}{Problem}
\newtheorem{example}{Example}
\theoremstyle{remark}
\newtheorem{remark}{Remark}
\numberwithin{equation}{section}
\def\OO{\mathcal{O}}
\def\II{\mathcal{I}}
\def\N{\mathcal{N}}
\def\PP{\mathbb{P}}
\def\Z{\mathbb{Z}}
\def\T{\mathcal{T}}
\def\op{\operatorname}
\def\rk{\operatorname{rk}}
\begin{document}

\title[Cohomology of Normal Sheaves]{Cohomology of normal bundles of special rational varieties}

\author{Alberto Alzati }
\address{Dipartimento di Matematica Univ. di Milano 
via C. Saldini 50 - 20133 Milano (Italy)}

\email[]{alberto.alzati@unimi.it}

\author{Riccardo Re
}

\address{Dipartimento di Scienza ed alta Tecnologia Univ. dell'Insubria
via Valleggio 11 22100-Como (Italy). }

\email[]{riccardo.re@uninsubria.it}

\urladdr{
}
\date{January, 2nd 2020
}\subjclass[2010]{Primary 14H45; Secondary 14F17, 14N05}
\keywords{Rational varieties, cohomology, normal bundle
}
\dedicatory{
}

\thanks{This work has been done within the framework of the national project
"Geometry on Algebraic Varieties", Prin (Cofin) 2015 of MIUR}

\begin{abstract} We give a new method for calculating the cohomology of the normal bundles over rational varieties which are smooth projections of Veronese embeddings. The method can be used also when the projections are not smooth, in this case it provides informations about the critical locus of maps between projective spaces.
\end{abstract}
\maketitle

\section{Introduction}
The problem of calculating the restricted tangent and especially the normal bundle to a smooth or at most nodal rational curve in a projective space has been very popular since the '80's, when a series of papers was devoted to the problem of classifying the rational curves with normal bundle having a given {\em splitting type}, when pulled back to $\mathbb{P}^1$ by means of the parametrization map, see for example \cite{EiVdV1}, \cite{EiVdV2}, \cite{Sac}, \cite {Verd}. This problem has received renewed attention in recent times in a line of research that answered some of the questions left open in those earlier papers, notably the question of the irreducibility of the Hilbert scheme of smooth rational curves having a given splitting type of the normal bundle, see \cite{A-R1} and \cite{CoRi}.
 In order to classify curves having normal bundle with a given splitting type, one has has to develop general methods of calculating this splitting type from a given embedding $\PP^1\to\PP^s$. This is equivalent to calculating the cohomology groups of any twist of the normal bundle pulled back to $\PP^1$.

In \cite{A-R1} and \cite{A-R0} we introduced a new method to calculate the cohomology groups of the tangent and the normal bundle of smooth rational curves embedded in projective spaces. This method has been useful in some applications developed in those papers and other applications are given in \cite{A-R2} and \cite{A-R-T}. In \cite{CoRi} a very simple method is found for calculating the splitting type of the normal bundle of a curve parametrized by a {\em monomial} map $\PP^1\to\PP^s$. It is natural to try to extend the method of calculation of cohomology from our previous papers to any smooth rational variety of dimension $n \geq 1$.

Let us consider maps $f:\PP^n\to \PP^s$, given by $s+1$ homogeneous, degree $d$ polynomials. In this case we can define the normal bundle $\mathcal{N}_{f}$ of the map $f$ as the cokernel of $\T_{\PP^n}\stackrel{df}\longrightarrow f^\ast \T_{\PP^s}$.  In this article we develop a method to calculate the cohomology of $f^\ast \T_{\PP^s}(-k)$ and $\N_f(-k)$ for any integer $k$. The proofs require some algebraic machinery based on representation theory and developed in section \ref{sec:background}.

When $X:=f(\PP^n)$ is smooth and $f$ is an embedding, $X$ is the smooth projection of some $d$-Veronese embedding of $\mathbb{P}^{n}$ in $\mathbb{P}^{N}$, where $N =\binom{d +n}{d}-1$. In this case $f^\ast \T_{\PP^s}$ is the restriction of the tangent bundle of $\PP^s$ to $X$ and $\N_f$ can be identified with the normal bundle of $X$ in $\PP^s$. So we get a method for calculating the cohomology of the normal and the restricted tangent bundle of smooth projections of $d$-Veronese embeddings, see Theorem \ref{teotg} in section \ref{Section restricted tg}  and Theorem \ref{teo normale} and Corollary 3 in section \ref{Section normal bundle}.

Note that in the case $\dim X=1$ any smooth rational curve is a suitable $d$-Veronese embedding of $\mathbb{P}^{1}$, so the results of the present paper may be considered as a generalization of known results in the case of curves. In that case one can use the cohomology of twisted normal bundles  to determine their splitting type, but, when $\dim X\geq 2$, the dimensions of cohomology groups of the normal bundle in the various shifts appear as the only possible substitute for the splitting type, as the normal bundle itself may very well not be split into a sum of line bundles (it may even be stable), so there is no splitting type in general. Let us recall that, even for monomial maps $\PP^n\to \PP^s$, there is no general recipe for calculating the cohomology groups of twists of the normal bundle, for example the method of \cite{CoRi} for $n=1$ does not apply.

As mentioned above, we compute the full cohomology groups of the shifts of the normal bundle. The non-trivial part of this computation is provided by Theorem \ref{teo normale}. The result of Theorem \ref{teo normale} also makes explicit the upper semicontinuity of the cohomology of $\N_f(-k)$ with respect to $f$. This knowledge in principle could be applied to obtain a stratification of the space of maps $f$ with respect to the cohomology modules of $N_f$. This is not an easy task, indeed even in the case $n=1$ there is still no general description of the admissible Hilbert functions of the normal bundles to smooth rational curves. However, if one admits curves with at most ordinary nodes as singularities, such a description was provided in \cite{GhSa} and \cite{Sac}. Moreover it is known that the strata corresponding to a given Hilbert function of $\N_f$ may have many irreducible components, see for example the results of \cite{A-R1} or \cite{CoRi}. We leave the possible study of this stratification to future work.

When $f$ is not an embedding our results about the cohomology of twists of $\N_f$ provides information about the critical locus $Z$ of $f$, namely numerical bounds on the degree of the $1$-dimensional part of $Z$, if any, or on the degree of its $0$-dimensional part. In the case when $Z$ has both $0$-dimensional and $1$-dimensional components we show that their degrees are non-trivially bounded, by giving some examples and some general results linking these invariants to the cohomology of twists of $\N_f$. We consider these problems a promising line of research to pursue further in a future work.

The paper is organized as follows: in section \ref{sec:background} we set notations and we describe some representation-theoretic constructions involving an $(n+1)$-dimensional vector space $U$ and we prove some of the required properties of them. In section \ref{Section restricted tg} we describe how these constructions allow to compute the cohomology of $f^\ast \T_{\PP^s}$ (shortly $
\T_f$),  hence the cohomology of the restricetd tangent bundle of $X$ when $X$ is a smooth $d$-Veronese embedding. In section \ref{Section normal bundle} we deal with the cohomology of $\N_f$, hence with the cohomolgy of the normal bundle of $X$ when $X$ is a smooth $d$-Veronese embedding. In section \ref{Section n=2} we consider the case $n=2$ in detail, assuming that $X$ is a smooth $d$-Veronese embedding and  $f$ is a monomial map: in this case our method allows to calculate the cohomology by hand in many cases and that section is devoted to give some hints how to perform the calculation in the simplest cases. In the last section we consider maps $f$ which are not embeddings and we get the results about $Z$ quoted above.

\section{
Notation and preliminary results
}\label{sec:background}
\paragraph{\bf Notation}

$V : =$ finite dimensional vector space on $\mathbb{C}$

$ \langle  . . . \rangle  : =$ vector subspace generated by the elements
between the brackets

$V^{ \vee } : =Hom(V ,\mathbb{C})$ the dual vector space of $V$

$M^{t} : =$ transposed of the matrix $M$

$A^{ \bot } : =$ annihilator of $A :$ for any linear subspace $A \subseteq V ,$ when there exists a perfect symmetric pairing $ \langle $ , $ \rangle $$ :V \times V \rightarrow \mathbb{C} ,$ let us denote $A^{ \bot } =\{b \in V\vert  \langle a ,b \rangle  =0$  for any $a \in A\} .$

$Sec(Y) : =$ secant variety
of a projective variety $Y \subset \mathbb{P}^{t}$, the closure of the set of points belonging to all secant lines to $Y$

$Tan(Y) : =$ tangent variety
of a smooth projective variety $Y \subset \mathbb{P}^{t}$, the closure of the set of points belonging to all tangent spaces to $Y$

$V(I) :=$ variety associated to the ideal $I$

Let $U \simeq \mathbb{C}^{n +1}$ be a $(n +1)$-dimensional vector space and let $S^{d}U$ be the $d$-symmetric product of $U .$ Throughout this paper we will always assume $n \geq 1.$ Let $x_{0} ,x_{1} , . . . ,x_{n}$ be a base for $U$ and let $\underline{x} : =[x_{0} . . .x_{n}]$ be the corresponding matrix of vectors. Let us consider the linear operators $ \partial _{x_{0}} , \partial _{x_{1}} , . . . , \partial _{x_{n}}$ (in brief $ \partial _{0} , . . . , \partial _{n})$ acting on $U$ as partial derivatives. It is known that we can choose $ \langle  \partial _{0} , . . . , \partial _{n} \rangle $ as a dual base for $U^{ \vee }$ in such a way that every $\omega  \in U^{ \vee }$ can be written as $\omega  =\alpha _{0} \partial _{0} + . . . +\alpha _{n} \partial _{n}$ and the two bases $\{x_i\}$ and $\{\partial_j\}$ induce a perfect pairing $S^{d}U \times S^{d}U^{ \vee } \rightarrow \mathbb{C}$ for any $d \geq 1,$ defining by letting the second space act on the first by derivations. 

Let us consider a (non zero) subspace $T \subseteq S^{d}U$ with $d \geq 2.$ We can define

\begin{center}$ \partial T : = \langle \omega (T)\vert\  \omega  \in U^{ \vee } \rangle  = \langle  \partial _{0}T + . . . + \partial _{n}T \rangle  ,$\end{center}\par
\noindent for instance $ \partial S^{d}U =S^{d -1}U .$ We can also introduce the space

\begin{center}$ \partial ^{ -1}T : ={\displaystyle\bigcap _{\omega  \in U^{ \vee }}}[\omega ^{ -1}(T)] = \partial _{0}^{ -1}T \cap  . . . \cap  \partial _{n}^{ -1}0T$ , \end{center}\par
\noindent for instance $ \partial ^{ -1}S^{d}U =S^{d +1}U$. By using induction we can also define $ \partial ^{k}T$ and $ \partial ^{ -k}T$ for any $k \geq 2.$

For future use, let us recall some $GL(U)$-invariant operators acting between spaces of tensors on $U$ or $U^{ \vee } .$ Note that, if $y_{0} ,y_{1} , . . . ,y_{n}$ is another base for $U ,$ $\underline{y} : =[y_{0} . . .y_{n}]$ the corresponding matrix of vectors and $M$ is a non singular $(n +1 ,n +1)$ matrix representing the linear map $\underline{y} =M\underline{x}$, then the matrix acting on $\underline{ \partial } : =[ \partial _{0} . . . \partial _{n}] ,$  in order to preserve the perfect pairing, is $(M^{t})^{ -1} .$ In fact, if $\underline{v} =(M^{t})^{ -1}\underline{ \partial } ,$ then $ \langle \underline{v} ,\underline{y} \rangle  = \langle (M^{t})^{ -1}\underline{ \partial } ,M\underline{x} \rangle  =\underline{ \partial }^{t}M^{ -1}M\underline{x} =\underline{ \partial }^{t}\underline{x}$ is the identity $(n +1 ,n +1)$ matrix.  The previous pairs of transformations, acting on $U$ and $U^{ \vee }$ respectively, can be extended to $S^{d}U$ and to $S^{d}U^{ \vee }$ in a natural way and moreover to any tensor product $V$ of such spaces.

A linear operator $\phi  :V \rightarrow V^{ \prime }$, defined between vector spaces $V$ and $V^{ \prime }$ on which the group $GL(U)$ acts as above, is called $GL(U)$-linear if, for any $v \in V$ and for any $g \in GL(U) ,$ we have $\phi [g(v)] =g[\phi (v)] .$ A linear subspace $W \subset V$ is called $GL(U)$-invariant if, for any $w \in W$ and for any $g \in GL(U)$ we have: $g(w) \in W .$ For instance: if $\phi $ is $GL(U)$-linear then $\ker (\phi )$ and $\ensuremath{\operatorname*{Im}}(\phi )$ are $GL(U)$-invariant. Any subspace $W \subset V$ which is $GL(U)$-invariant defines a representation of $GL(U)$ which is a subrepresentation of $V .$

The following operators will be needed in the sequel.

1) General contractions. Due to the perfect pairing $S^{d}U \times S^{d}U^{ \vee } \rightarrow \mathbb{C}$ quoted above, we have contractions maps:

\begin{center}$S^{i}U^{ \vee } \otimes S^{j}U \rightarrow S^{j -i}U$ for $j \geq i \geq 0$\end{center}\par
\noindent such that every element of $S^{i}U^{ \vee }$ is thought as a degree $i$ polynomial of differential operators $ \partial _{0} , . . . , \partial _{n}$ acting on every degree $j$ polynomial of $S^{j}U$. For instance, if $i =j =d \geq 1 ,q \in S^{d}U^{ \vee }$and $l =a_{0}x_{0} + . . . +a_{n}x_{n} \in U$ we have that $q \otimes l^{d} \rightarrow q(l^{d})\  =d !q(a_{0} , . . . ,a_{n})$ .

\medskip

2) Multiplication maps. They are the $GL(U)$ linear maps

\begin{center}
$S^{i}U \otimes S^{j}U \rightarrow S^{i +j}U$ for $i \geq 0 ,j \geq 0$\end{center}\par

\noindent extending linearly the natural maps associating to any pure tensor $a \otimes b$ $ \in S^{i}U \otimes S^{j}U$ the product $ab \in S^{i +j}U$.

\medskip

3) Polarizations maps. They are $GL(U)$ linear maps

\begin{center}$p_{k} :S^{d +k}U \rightarrow S^{k}U \otimes S^{d}U$ for $k \geq 1 ,d \geq 1$\end{center}\par
\noindent which are proportional to the duals of the multiplication maps $m :S^{k}U^{ \vee } \otimes S^{d}U^{ \vee } \rightarrow S^{d +k}U^{ \vee }$ ; the proportionality factor is determined so
 that $m[p_{k}(q)] =q$ for any $q \in S^{d +k}U ,$ the polarizations maps are always injective and they are also uniquely defined by the condition $p_{k}(l^{d +k}) =l^{k} \otimes l^{d}$ for any $l \in U .$

In the sequel we will need an explicit way to write $p_{k}(q)$ for any $q \in S^{d +k}U .$ To this aim, let us introduce some notation. Let $I$ be a multi-index $(i_{0} ,i_{1} , . . . ,i_{n})$ where $i_{j}$ are non negative integers. Let us denote $\vert I\vert  : =\sum \limits _{j =0}^{n}i_{j} .$ By the symbol $x^{I}$ we will denote the monomial $x_{0}^{i_{0}}x_{1}^{i_{1}} . . .x_{n}^{i_{n}}$ of degree $\vert I\vert  .$ By the symbol $ \partial ^{I}f$ we will denote the partial derivative of any polynomial $f \in \mathbb{C}[x_{0} . . .x_{n}]$ by $ \partial _{0}^{i_{1}} \partial _{1}^{i_{1}} . . . \partial _{n}^{i_{n}}$. For any $r =0 , . . . ,n ,$ $I +1_{r} =(i_{0} ,i_{1} , . . . ,i_{r} +1 , . . . ,i_{n})$; analogously $I -1_{r} =(i_{0} ,i_{1} , . . . ,i_{r} -1 , . . . ,i_{n})$, of course only if $i_{r} \geq 1.$ Moreover $\binom{k}{I}$ $ : =\frac{k !}{i_{0} !i_{1} ! . . . .i_{n} !}$. 

Then we can say that (see \cite{A-R1}, formula (3.3)):

\begin{center}$p_{k}(q) =\frac{d !}{(d +k) !}\sum _{\vert I\vert  =k}\binom{k}{I}x^{I} \otimes  \partial ^{I}q .$\end{center}\par
\noindent Note that $\binom{k}{I}i_{r} =\binom{k}{I -1_{r}} .$ Note also that the sum with respect to $\vert I\vert  =k$ is the sum with respect to all monomials of degree $k$ in $n$ variables.

\medskip

4) The $GL(U)$ linear operator $\delta  :S^{k}U \otimes S^{d}U \rightarrow S^{k -1}U \otimes S^{d +1}U$ for any $k \geq 1$, $d \geq 0 ,$ such that:

\begin{center}$\delta (a \otimes b) =\sum \limits _{j =0}^{n} \partial _{j}(a) \otimes x_{j}b$ for any generator $a \otimes b \in S^{k}U \otimes S^{d}U .$\end{center}\par
\medskip

5)The $GL(U)$ linear operator $\theta  :S^{k}U \otimes S^{d}U \rightarrow S^{k +1}U \otimes S^{d -1}U$ for any $k \geq 0$, $d \geq 1 ,$ such that:

\begin{center}$\theta (a \otimes b) =\sum \limits _{j =0}^{n}x_{j}a \otimes  \partial _{j}(b$) for any generator $a \otimes b \in S^{k}U \otimes S^{d}U .$\end{center}\par

\medskip

6) $\psi _{k ,t}$ maps. They are maps

\begin{center}$\psi _{k ,t} :U \otimes S^{t}U \rightarrow U \otimes S^{k}U \otimes S^{t -k}U$$ \rightarrow S^{k}U \otimes S^{t -k +1}U$ for $k \geq 1 ,$  $t \geq k$\end{center}\par
\noindent which are the composition of $1 \otimes p_{k} :U \otimes S^{t}U \rightarrow U \otimes S^{k}U \otimes S^{t -k}U$ and the multiplication on the first and third factor: $U \otimes S^{t -k}U \rightarrow S^{t -k +1}U$. As $1 \otimes p_{k}$ and the multiplication are $GL(U)$ linear maps, we have that $\ensuremath{\operatorname*{Im}}(\psi _{k ,t})$ are $GL(U)$-invariant subspaces of $S^{k}U \otimes S^{t -k +1}U ,$ hence they define $GL(U)$ subrepresentations of this vector space.

\medskip

7) The operators $\xi _{i ,j} .$ For any pair of distinct integers $i ,j$ $ \in [0 ,n]$ with $i <j ,$ let us consider the element $x_{i} \otimes x_{j} -x_{j} \otimes x_{i} \in U \otimes U .$ Then,  we have a map

\begin{center}$\xi _{i ,j} :S^{k -1}U \otimes S^{d -1}U \rightarrow S^{k}U \otimes S^{d}U$ for $k \geq 2 ,$  $d \geq 2 ,$  \end{center}\par
\noindent given by the multiplication of every element of $S^{k -1}U \otimes S^{d -1}U$ by $x_{i} \otimes x_{j} -x_{j} \otimes x_{i} .$

\medskip

8) The operators $D_{i ,j} .$ For any pair of distinct integers $i ,j \in [0 ,n]$ with $i <j ,$ let us consider the element $ \partial _{i} \otimes  \partial _{j} - \partial _{j} \otimes  \partial _{i} \in U^{ \vee } \otimes U^{ \vee }$. Then, we have a map

\begin{center}$D_{i ,j} :S^{k}U \otimes S^{d}U \rightarrow S^{k -1}U \otimes S^{d -1}U$  for any $k \geq 1 ,d \geq 1$, \end{center}\par
\noindent given by the derivation of every element of $S^{k}U \otimes S^{d}U$ by $ \partial _{i} \otimes  \partial _{j} - \partial _{j} \otimes  \partial _{i} .$ Note that, for any $q \in S^{k +d}U ,$ we have $D_{r ,s}[p_{k}(q)] =0$. In fact $D_{r ,s}[p_{k}(q)] =D_{r ,s}[\frac{d !}{(d +k) !}\sum _{\vert I\vert  =k}\binom{k}{I}x^{I} \otimes  \partial ^{I}q] ,$ and forgetting the fixed coefficient $\frac{d !}{(d +k) !}$ we have:

$\sum _{\vert I\vert  =k;i_{r} \geq 1}\binom{k}{I -1_{r}}x^{I -1_{r}} \otimes  \partial ^{I +1_{s}}q -\sum _{\vert I\vert  =k;i_{s} \geq 1}\binom{k}{I -1_{s}}x^{I -1_{s}} \otimes  \partial ^{I +1_{r}}q =$

$\sum _{\vert H\vert  =k -1}\binom{k}{H}x^{H} \otimes  \partial ^{H +1_{r} +1_{s}}q -\sum _{\vert J\vert  =k -1}\binom{k}{J}x^{J} \otimes  \partial ^{J +1_{s} +1_{r}}q =0;$

\noindent for the last equalities we have put $H : =I -1_{r}$ when $i_{r} \geq 1$, and $J : =I -1_{s}$ when $i_{s} \geq 1.$

\medskip

We need to prove the following Lemma.

\begin{lemma}
\label{lemdelta}For any $q \in S^{d +k}U$ with $k \geq 0$ and $d \geq 1$ we have $\theta [p_{k}(q)] =(d)p_{k +1}(q) .$ For any $q \in S^{d +k}U$ with $k \geq 1$ and $d \geq 0$ we have $\delta [p_{k +1}(q)] =(k +1)p_{k}(q) .$ For any $q \in S^{d +k}U$ with $k \geq 1$ and $d \geq 1$ we have $\delta \{\theta [p_{k}(q)]\} =d(k +1)p_{k}(q) ,$ hence $\delta  \circ \theta $ is an automorphism of $p_{k}(S^{d +k}U) ,$ and $\theta \{\delta [p_{k +1}(q)]\} =d(k +1)p_{k +1}(q) ,$ hence $\theta  \circ \delta $ is an automorphism of $p_{k +1}(S^{d +k}U) .$ The operators $\delta $ and $\theta $ commute with all operators $\xi _{i ,j} .$
\end{lemma}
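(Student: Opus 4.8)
The plan is to verify each identity directly from the explicit formula
$$p_{k}(q)=\frac{d!}{(d+k)!}\sum_{|I|=k}\binom{k}{I}\,x^{I}\otimes\partial^{I}q$$
by applying the definitions of $\theta$ and $\delta$ termwise and then re-indexing the resulting sums, exactly in the spirit of the computation already carried out above for $D_{r,s}[p_{k}(q)]=0$. Concretely, for the first identity I would write
$$\theta[p_{k}(q)]=\frac{d!}{(d+k)!}\sum_{|I|=k}\binom{k}{I}\sum_{r=0}^{n}x_{r}x^{I}\otimes\partial_{r}\partial^{I}q
=\frac{d!}{(d+k)!}\sum_{r=0}^{n}\sum_{|I|=k}\binom{k}{I}x^{I+1_{r}}\otimes\partial^{I+1_{r}}q,$$
then substitute $H:=I+1_{r}$, so $|H|=k+1$, and collect: the monomial $x^{H}$ arises from those $r$ with $h_{r}\geq 1$, each contributing $\binom{k}{H-1_{r}}$, and using $\binom{k}{H-1_{r}}=\frac{h_{r}}{k+1}\binom{k+1}{H}$ together with $\sum_{r}h_{r}=|H|=k+1$ one gets the coefficient $\binom{k+1}{H}$ for $x^{H}\otimes\partial^{H}q$; comparing with the formula for $p_{k+1}(q)$ (which carries the prefactor $\frac{(d-1)!}{(d+k)!}$, since there the "degree-$d$ part" is replaced by degree $d-1$) produces exactly the factor $d$. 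The second identity is the dual computation: applying $\delta$ to $p_{k+1}(q)=\frac{(d-1)!}{(d+k)!}\sum_{|I|=k+1}\binom{k+1}{I}x^{I}\otimes\partial^{I}q$ gives $\sum_{r}\partial_{r}(x^{I})\otimes x_{r}\partial^{I}q=\sum_{r}i_{r}\,x^{I-1_{r}}\otimes x_{r}\partial^{I}q$, and re-indexing with $J:=I-1_{r}$, using $\binom{k+1}{I}i_{r}=\binom{k+1}{I-1_{r}}$ (the identity $\binom{k}{I}i_{r}=\binom{k}{I-1_{r}}$ noted in the text, applied with $k+1$) and $\partial^{I}q=\partial^{J+1_{r}}q$, collapses the sum over $r$ to a single term $x^{J}\otimes\partial^{J}q$ with the combinatorial weight rearranging to $(k+1)\binom{k}{J}$, which against the prefactors yields $(k+1)p_{k}(q)$.

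The third and fourth statements are immediate formal consequences: composing, $\delta\{\theta[p_{k}(q)]\}=\delta[(d)p_{k+1}(q)]=d(k+1)p_{k}(q)$ and $\theta\{\delta[p_{k+1}(q)]\}=\theta[(k+1)p_{k}(q)]=(k+1)(d)p_{k+1}(q)$, using in each case the already-proved identities (note the first one is applied to $q\in S^{d+k}U$ viewed with parameters $(k,d)$, the second with parameters $(k,d)$ as well, so the degrees match). Since $d(k+1)\neq 0$ under the stated hypotheses $k\geq 1$, $d\geq 1$, these say that $\delta\circ\theta$ acts on the subspace $p_{k}(S^{d+k}U)$ as the nonzero scalar $d(k+1)$, hence is an automorphism of it, and likewise $\theta\circ\delta$ on $p_{k+1}(S^{d+k}U)$; I would just remark that $p_{k}$ and $p_{k+1}$ are injective (stated in item 3 above), so these image subspaces are genuine copies of $S^{d+k}U$.

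For the final assertion, that $\delta$ and $\theta$ commute with every $\xi_{i,j}$, the cleanest route is a direct termwise check on a generator $a\otimes b$. Writing $\xi_{i,j}(a\otimes b)=x_{i}a\otimes x_{j}b-x_{j}a\otimes x_{i}b$ and applying $\theta$, one gets a sum over $r$ of $x_{r}x_{i}a\otimes\partial_{r}(x_{j}b)-x_{r}x_{j}a\otimes\partial_{r}(x_{i}b)$; expanding $\partial_{r}(x_{j}b)=\delta_{rj}b+x_{j}\partial_{r}b$ (and similarly for $x_{i}b$), the "$\delta_{rj}$" and "$\delta_{ri}$" terms cancel against each other, leaving $\sum_{r}x_{i}(x_{r}a)\otimes x_{j}(\partial_{r}b)-x_{j}(x_{r}a)\otimes x_{i}(\partial_{r}b)=\xi_{i,j}[\theta(a\otimes b)]$; the computation for $\delta$ is the symmetric one, with $\partial_{r}(x_{i}a)=\delta_{ri}a+x_{i}\partial_{r}a$ and the delta-terms again cancelling. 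I expect the only mildly delicate point in the whole lemma to be bookkeeping the two different prefactors $\frac{d!}{(d+k)!}$ versus $\frac{(d-1)!}{(d+k)!}$ attached to $p_{k}$ and $p_{k+1}$ (equivalently, keeping straight which "$d$" is meant when $q$ is fixed but reinterpreted for different values of $k$), since that is precisely where the scalars $d$ and $k+1$ come from; the rest is routine re-indexing of the type already modeled in the text.
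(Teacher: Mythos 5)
Your proof is correct and follows essentially the same route as the paper's: a direct termwise computation from the explicit formula for $p_{k}(q)$, re-indexing of multi-indices, and the Euler relation (equivalently the multinomial identity $\binom{k}{H-1_{r}}=\tfrac{h_{r}}{k+1}\binom{k+1}{H}$) to produce the scalars $d$ and $k+1$ from the differing prefactors of $p_{k}$ and $p_{k+1}$. The only addition is that you spell out the cancellation of the Kronecker-delta terms in the commutation with $\xi_{i,j}$, which the paper dismisses as a straightforward calculation.
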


\begin{proof}
Let us write

$\theta [p_{k}(q)] =\theta [\frac{d !}{(d +k) !}\sum _{\vert I\vert  =k}\binom{k}{I}x^{I} \otimes  \partial ^{I}q] =\frac{d !}{(d +k) !}\sum _{\vert I\vert  =k}\binom{k}{I}\sum \limits _{j =0}^{n}x^{I +1_{j}} \otimes  \partial ^{I +1_{j}}q =$

$ =\frac{d !}{(d +k) !}\sum _{\vert H\vert  =k +1;h_{j} \geq 1}\sum \limits _{j =0}^{n}\binom{k}{H -1_{j}}x^{H} \otimes  \partial ^{H}q$;

\noindent for the last equality we have put $H : =I +1_{j}$, of course $h_{j} \geq 1.$

$(d)p_{k +1}(q) =\frac{d !}{(d +k) !}\sum _{\vert H\vert  =k +1}\binom{k +1}{H}x^{H} \otimes  \partial ^{H}q =$

$ =\frac{d !}{(d +k) !}\sum _{\vert H\vert  =k +1}\binom{k +1}{H}\sum \limits _{j =0}^{n}\frac{1}{k +1}x_{j} \partial _{j}(x^{H}) \otimes  \partial ^{H}q =$

$ =\frac{d !}{(d +k) !}\sum _{\vert H\vert  =k +1}\binom{k}{H}\sum \limits _{j =0}^{n}i_{j}x^{H} \otimes  \partial ^{H}q =$

$ =\frac{d !}{(d +k) !}\sum _{\vert H\vert  =k +1;h_{j} \geq 1}\sum \limits _{j =0}^{n}\binom{k}{H -1_{j}}x^{H} \otimes  \partial ^{H}q$

\noindent and we have done. 

We have used the Euler relation by writing $x^{H} =\frac{1}{k +1}\sum \limits _{j =0}^{n}x_{j} \partial _{j}(x^{H})$ because $\deg (x^{H}) =k +1.$

$\delta [p_{k +1}(q)] =\delta [\frac{(d -1) !}{(d +k) !}\sum _{\vert I\vert  =k +1}\binom{k +1}{I}x^{I} \otimes  \partial ^{I}q] =$

$ =\frac{(d -1) !}{(d +k) !}\sum _{\vert I\vert  =k +1}\binom{k +1}{I}\sum \limits _{j =0}^{n} \partial _{j}(x^{I}) \otimes x_{j} \partial ^{I}q =$

$ =\frac{(d -1) !}{(d +k) !}\sum _{\vert I\vert  =k +1;i_{j} \geq 1}\binom{k +1}{I}\sum \limits _{j =0}^{n}i_{j}x^{I -1_{j}} \otimes x_{j} \partial ^{I}q =$

$ =\frac{(d -1) !}{(d +k) !}\sum _{\vert I\vert  =k +1;i_{j} \geq 1}\sum \limits _{j =0}^{n}\binom{k +1}{I -1_{j}}x^{I -1_{j}} \otimes x_{j} \partial ^{I +1_{j} -1_{j}}q =$

$ =\frac{(d -1) !}{(d +k) !}\sum _{\vert H\vert  =k}\sum \limits _{j =0}^{n}\binom{k +1}{H}x^{H} \otimes x_{j} \partial ^{H +1_{j}}q$;

\noindent for the last equality we have put $H : =$$I -1_{j}$, of course when $i_{j} \geq 1.$

$(k +1)p_{k}(q) =(k +1)[\frac{d !}{(d +k) !}\sum _{\vert H\vert  =k}\binom{k}{H}x^{H} \otimes  \partial ^{H}q] =$

$ =\frac{d !}{(d +k) !}\sum _{\vert H\vert  =k}\binom{k +1}{H}x^{H} \otimes \frac{1}{d}\sum \limits _{j =0}^{n}x_{j} \partial ^{H +1_{j}}q] =$

$ =\frac{(d -1) !}{(d +k) !}\sum _{\vert H\vert  =k}\sum \limits _{j =0}^{n}\binom{k +1}{H}x^{H} \otimes x_{j} \partial ^{H +1_{j}}q$

\noindent and we have done. 

We have used the Euler relation by writing $ \partial ^{H}q =\frac{1}{d}\sum \limits _{j =0}^{n}x_{j} \partial ^{H +1_{j}}q$ because $\deg ( \partial ^{H}q) =d.$ 

The above relations show that $\delta \{\theta [p_{k}(q)]\} =d(k +1)p_{k}(q)$ and $\theta \{\delta [p_{k +1}(q)]\} =d(k +1)p_{k +1}(q) .$ The last property follows by straightforward calculations.
\end{proof}

Now we are ready to prove a particular Pieri decomposition for $S^{k}U \otimes S^{d}U$ as direct sum of irreducible $GL(U)$-representation (for which we refer to \cite{FH91}), for any $1 \leq k \leq d$.

\begin{proposition}
\label{propPieri} In the previous notation, for any $1 \leq k \leq d$, we have:
\end{proposition}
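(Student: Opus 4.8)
The statement to be proved is a Pieri-type decomposition of $S^kU\otimes S^dU$ into irreducible $GL(U)$-representations for $1\le k\le d$. From the classical Pieri rule one knows that
\[
S^kU\otimes S^dU\;\simeq\;\bigoplus_{i=0}^{k}S^{(d+k-i,\,i)}U,
\]
where $S^{(a,b)}U$ denotes the irreducible representation attached to the partition $(a,b)$ with $a\ge b\ge 0$; so the expected form of the statement is this multiplicity-free sum. The plan is to produce this decomposition concretely, by exhibiting explicit $GL(U)$-linear maps that split off each summand, rather than by invoking the abstract Pieri rule as a black box, since the later theorems will need to manipulate these pieces explicitly via the operators $\delta,\theta,\psi_{k,t},\xi_{i,j},D_{i,j}$ introduced above.

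First I would identify the top piece: the polarization map $p_k:S^{d+k}U\hookrightarrow S^kU\otimes S^dU$ realizes $S^{(d+k)}U=S^{d+k}U$ as a subrepresentation, and by Lemma \ref{lemdelta} the composite $\delta\circ\theta$ (or the relevant power) is an automorphism of $p_k(S^{d+k}U)$, which gives a $GL(U)$-equivariant projection onto it and hence a splitting $S^kU\otimes S^dU = p_k(S^{d+k}U)\oplus C_1$ with $C_1=\ker$ of that projection. Then I would iterate: on the complement, the operator $\delta:S^kU\otimes S^dU\to S^{k-1}U\otimes S^{d+1}U$ lowers $k$ by one, and composing with the lower polarizations identifies the next summand $S^{(d+k-1,1)}U$, and so on down to $i=k$. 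Equivalently — and this is cleaner — I would use the images of the maps $\psi_{k,t}$ together with the kernels of the $\xi_{i,j}$ or $D_{i,j}$ to cut out each isotypic piece; the operators $D_{i,j}$ annihilate every $p_k(q)$, which suggests that the "new" part in each stage is detected by the $D_{i,j}$, and this gives an inductive description of the summands indexed by $i$.

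The main technical point — and the one I expect to be the real obstacle — is verifying that these explicitly constructed subspaces are genuinely irreducible and pairwise non-isomorphic, and that they exhaust $S^kU\otimes S^dU$, i.e. that the direct sum is the whole space. Irreducibility and the identification with the Schur functor $S^{(d+k-i,i)}U$ can be pinned down either by a highest-weight computation (exhibiting a highest weight vector of weight $(d+k-i,i,0,\dots,0)$ inside each piece) or by a dimension count using the hook-content / Weyl dimension formula and comparing with $\dim S^kU\cdot\dim S^dU=\binom{n+k}{n}\binom{n+d}{n}$; the condition $k\le d$ is exactly what guarantees no collapsing or truncation occurs in the Pieri expansion, so it must be used at the point where one checks that all $k+1$ partitions $(d+k-i,i)$, $0\le i\le k$, actually occur with multiplicity one. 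I would handle this by the highest-weight-vector route, since it simultaneously proves irreducibility, computes the isomorphism type, and shows multiplicity one, after which surjectivity of $\bigoplus_i S^{(d+k-i,i)}U\to S^kU\otimes S^dU$ follows from the dimension identity $\sum_{i=0}^k\dim S^{(d+k-i,i)}U=\binom{n+k}{n}\binom{n+d}{n}$, a standard combinatorial identity valid for $k\le d$.

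Finally I would record, for later use, the explicit projectors: the projection onto $p_k(S^{d+k}U)$ is a scalar multiple of $\theta\circ\delta$ by Lemma \ref{lemdelta}, and the projections onto the remaining summands are built from iterated applications of $\delta$ followed by the corresponding lower polarization, normalized using the scalar factors $d(k+1)$ etc. computed in that Lemma. This makes the decomposition effective, which is what the cohomology computations in sections \ref{Section restricted tg} and \ref{Section normal bundle} will rely on.
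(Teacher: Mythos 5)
Your reading of the statement is right: the proposition asserts that the Pieri constituents $\mathbb{S}_{(d+k-t,t)}U$ of $S^kU\otimes S^dU$ are realized by the explicit subspaces $A_t=\sum\xi_{i,j}\cdots\xi_{r,s}\,p_{k-t}(S^{d+k-2t}U)$, and your outline (split off $p_k(S^{d+k}U)$, then identify the remaining pieces) starts the same way the paper does. But your route diverges from the paper's at the crux and has a genuine gap there. Exhibiting a highest weight vector of weight $(d+k-t,t,0,\dots,0)$ inside $A_t$ proves only that $A_t$ \emph{contains} the (unique, by multiplicity-freeness) copy of $\mathbb{S}_{(d+k-t,t)}U$; it does not prove $A_t$ is irreducible, i.e.\ it does not rule out that $A_t$ also contains other constituents $\mathbb{S}_{(d+k-s,s)}U$ for $s\neq t$. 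Likewise the identity $\sum_t\dim\mathbb{S}_{(d+k-t,t)}U=\dim S^kU\cdot\dim S^dU$ is just a restatement of the abstract Pieri rule and gives no upper bound on $\dim A_t$, so the combination ``highest weight vector $+$ dimension identity'' shows that the $A_t$ span $S^kU\otimes S^dU$ but not that the sum is direct, nor that $A_t=\mathbb{S}_{(d+k-t,t)}U$. This ``no extra constituents'' step is exactly what the bulk of the paper's proof is devoted to, so your claim that the highest-weight computation ``simultaneously proves irreducibility'' is where the argument would fail as written.

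The paper closes this gap by induction on $k$: assuming the decomposition of $S^{k-1}U\otimes S^{d+1}U$ into irreducibles $B_t$, it uses Lemma \ref{lemdelta} to get $\delta(A_t)=B_t$ for $t\le k-1$ and $\delta(A_k)=0$, and a counting/contradiction argument (any constituent $S$ shared by two distinct $A_t$'s would have to satisfy $\delta(S)=0$, which is then shown to be impossible) forces each $A_t$ to be a single irreducible; the case $n=1$ is settled first by comparing dimensions with the classical Pieri decomposition. If you want to avoid the induction, your route can be repaired by one extra observation: $A_t$ is the image of a $GL(U)$-equivariant map from $(\bigwedge^2U)^{\otimes t}\otimes S^{d+k-2t}U$, and by the Pieri rule for $\bigwedge^2U$ the only constituent of that source whose partition has at most two rows is $\mathbb{S}_{(d+k-t,t)}U$, with multiplicity one; since every constituent of $S^kU\otimes S^dU$ has at most two rows, $A_t$ is either $0$ or exactly $\mathbb{S}_{(d+k-t,t)}U$, and your highest weight vector rules out $0$. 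A smaller imprecision: the fact that $\delta\circ\theta$ acts as a nonzero scalar on $p_k(S^{d+k}U)$ does not by itself yield a projector from the whole space onto that summand (you would need to know its eigenvalues on the other constituents are different, which presupposes the decomposition); complete reducibility gives you the complement anyway, so this one is cosmetic rather than fatal.
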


\begin{center}$S^{k}U \otimes S^{d}U =p_{k}(S^{d +k}U) \oplus \sum \xi _{i ,j}p_{k -1}(S^{d +k -2}U) \oplus $\end{center}\par
\begin{center}$ \oplus \sum \xi _{i ,j}\xi _{r ,s}p_{k -2}(S^{d +k -4}U) \oplus  . . . \oplus \sum \xi _{i ,j} . . .\xi _{r ,s}p_{0}(S^{d -k}U)$\end{center}\par
\noindent assuming that $S^{i}U =0$ for $i <0$, $S^{0}U =\mathbb{C}$ and that $p_{0}$ is the identity. Note that $\sum \xi _{i ,j}p_{k -1}(S^{d +k -2}U)$ is a short notation for the linear space

\begin{center}$ \langle \xi _{0 ,1}p_{k -1}(S^{d +k -2}U) ,\xi _{0 ,2}p_{k -1}(S^{d +k -2}U) , . . . ,\xi _{n -1 ,n}p_{k -1}(S^{d +k -2}U) \rangle $\end{center}\par
and so on.

\begin{proof}
First, let us prove the above Proposition when $n =1.$ In this case we have to prove that

\begin{center}$S^{k}U \otimes S^{d}U =p_{k}(S^{d +k}U) \oplus \xi _{0 ,1}p_{k -1}(S^{d +k -2}U) \oplus $\end{center}\par
\begin{center}$ \oplus \xi _{0 ,1}^{2}p_{k -2}(S^{d +k -4}U) \oplus  . . . \oplus \xi _{0 ,1}^{k}p_{0}(S^{d -k}U)$.\end{center}\par

For any $t =0 , . . . ,k ,$ we have that $\xi _{0 ,1}^{t}p_{k -t}(S^{d +k -2t}U)$ is a $GL(U)$-representation of $S^{k}U \otimes S^{d}U$ of dimension $d +k -2t +1.$ On the other hand, the standard Pieri decomposition of $S^{k}U \otimes S^{d}U$ is the direct sum of $k +1$ irreducible representations $\mathbb{S}_{(k +d ,0)} , . . . ,\mathbb{S}_{(d ,k)}$ such that $\dim (\mathbb{S}_{(k +d -t ,t)}) =d +k -2t +1$ (see \cite{FH91} pag. 81 and Theorem 6.3, in general it is not easy to calculate such dimensions, but it is very easy for $n =1$); therefore we can conclude that $\xi _{0 ,1}^{t}p_{k -t}(S^{d +k -2t}U) =\mathbb{S}_{(k +d -t ,t)}$ for any $t =0 , . . . ,k .$

From now on, let us assume that $n \geq 2$ and let us start with the case $d =k =1.$ It is well known that $U \otimes U =S^{2}U \oplus \bigwedge ^{2}U$ and that this is the decomposition of $U \otimes U$ as the sum of its irreducible $GL(U)$-representations.  On the other hand, $p_{1}(S^{2}U) \simeq S^{2}U$ is a $GL(U)$ invariant subspace of $U \otimes U ,$ as $p_{1}$ is $GL(U)$ linear, and a direct calculus shows that the same is true for $\sum \xi _{i ,j}p_{0}(S^{0}U) = \langle \xi _{0 ,1} ,\xi _{0 ,2} , . . . ,\xi _{n -1 ,n} \rangle $. Hence they are two $GL(U)$-representations of $U \otimes U$, a priori reducible, and they appear in the decomposition $U \otimes U =S^{2}U \oplus \bigwedge ^{2}U$. By calculating the dimension $\binom{n}{2}$ of $\sum \xi _{i ,j}$ it is immediate to see that $p_{1}(S^{2}U)$ $ \simeq S^{2}U$ and that $\sum \xi _{i ,j}$ $ \simeq \bigwedge ^{2}U .$

Now let us consider the cases with $k =1$ and $d \geq 2$. We know that $U \otimes S^{d}U =\mathbb{S}_{(1 +d ,0)}U \oplus \mathbb{S}_{(d ,1)}U$ and that $\mathbb{S}_{(1 +d ,0)}U =S^{d +1}U$ (see \cite{FH91} pag. 81); moreover, as in the case $d =1,$ it can be shown that $p_{1}(S^{d +1}U)$  and $\sum \xi _{i ,j}p_{0}(S^{d -1}U) =\sum \xi _{i ,j}S^{d -1}U$ are $GL(U)$-representations of $U \otimes S^{d}U$ (a priori reducible). Let us recall that the $p_{k}$ maps are always injective, hence $p_{1}(S^{d +1}U) \subseteq S^{d +1}U ,$ this latter intended as the irreducible summand of  $U \otimes S^{d}U =S^{d +1}U\oplus \mathbb{S}_{(d ,1)}U$. It follows that the $GL(U)$-representation $p_{1}(S^{d +1}U)$ is irreducible and it coincides with $\mathbb{S}_{(1 +d ,0)}U =S^{d +1}U .$ Moreover $p_{1}(S^{d +1}U) \cap \sum \xi _{i ,j}S^{d -1}U =0;$ in fact any element of $\sum \xi _{i ,j}S^{d -1}U \subseteq U \otimes S^{d}U$ is of the following type: $\sum _{i ,j =0 , . . . ,n}(x_{i} \otimes x_{j}f^{i ,j} -x_{j} \otimes x_{i}f^{i ,j})$ for suitable $f^{i ,j} \in S^{d -1}U ,$ hence $m[\sum _{i ,j =0 , . . . ,n}(x_{i} \otimes x_{j}f^{i ,j} -x_{j} \otimes x_{i}f^{i ,j}$$)] =0 ,$ where $m :U \otimes S^{d}U \rightarrow S^{d +1}U$ is the multiplication map. This fact proves that $\sum \xi _{i ,j}S^{d -1}U \subseteq \ker (m) .$ On the other hand, any element of $p_{1}(S^{d +1}U)$ is of the following type: $\frac{1}{d +1}(x_{0} \otimes  \partial _{0}q + . . . +x_{n} \otimes  \partial _{n}q)$ for a suitable $q \in S^{d +1}U .$ It follows that $m[\frac{1}{d +1}(x_{0} \otimes  \partial _{0}q + . . . +x_{n} \otimes  \partial _{n}q)] =q$, hence $\ker (m) \cap p_{1}(S^{d +1}$$U) =0$ and therefore $p_{1}(S^{d +1}U) \cap \sum \xi _{i ,j}S^{d -1}U =0.$ As $p_{1}(S^{d +1}U) =\mathbb{S}_{(1 +d ,0)}U$ we get $\sum \xi _{i ,j}S^{d -1}U =\mathbb{S}_{(d ,1)}U$.

Now let us consider the general cases with $k \geq 2$ and $d \geq 2$ and let us proceed by induction on $k .$ Let us fix any $d \geq 2;$ by induction, for any $k \geq 2 ,$ we can assume that

\begin{center}$S^{k -1}U \otimes S^{d +1}U$ $ =p_{k -1}(S^{d +k}U) \oplus \sum \xi _{i ,j}p_{k -2}(S^{d +k -2}U) \oplus $\end{center}\par
\begin{center}$ \oplus \sum \xi _{i ,j}\xi _{r ,s}p_{k -3}(S^{d +k -6}U) \oplus  . . . \oplus \sum \xi _{i ,j} . . .\xi _{r ,s}p_{0}(S^{d -k +2}U) .$\end{center}\par
\noindent and we want to prove that

\begin{center}$S^{k}U \otimes S^{d}U$ $ =p_{k}(S^{d +k}U) \oplus \sum \xi _{i ,j}p_{k -1}(S^{d +k -2}U) \oplus $\end{center}\par
\begin{center}$ \oplus \sum \xi _{i ,j}\xi _{r ,s}p_{k -2}(S^{d +k -4}U) \oplus  . . . \oplus \sum \xi _{i ,j} . . .\xi _{r ,s}p_{0}(S^{d -k}U) .$\end{center}\par
To simplify notations, let us write $A_{t} : =\sum \xi _{i ,j} . . .\xi _{r ,s}p_{k -t}(S^{d +k -2t}U)$ for $t =0 , . . . ,k$ and $B_{t} : =\sum \xi _{i ,j} . .\xi _{r ,s}p_{k -1 -t}(S^{d +k -2t}U)$ for $t =0 , . . . ,k -1.$ So that we can assume that

\begin{center}$S^{k -1}U \otimes S^{d +1}U =B_{0} \oplus B_{1} \oplus  . . . \oplus B_{k -1}$\end{center}\par
\noindent and we have to prove that

\begin{center} $S^{k}U \otimes S^{d}U =A_{0} \oplus A_{1} \oplus  . . . \oplus A_{k -1} \oplus A_{k} .$\end{center}\par
Note that every $A_{t}$ is a $GL(U)$-representation of $S^{k}U \otimes S^{d}U ,$ a priori reducible, and we know that the standard Pieri decomposition of $S^{k}U \otimes S^{d}U$ as direct sum of irreducible $GL(U)$-representations is :

\begin{center}$S^{k}U \otimes S^{d}U =\mathbb{S}_{(k +d ,0)}U \oplus \mathbb{S}_{(k +d -1 ,1)}U \oplus  . . . \oplus \mathbb{S}_{(d ,k)}U$\end{center}\par
\noindent (see \cite{FH91} pag. 81) hence, to complete our proof, it suffices to show that $\{A_{0} , . . . ,A_{k}\} =\{\mathbb{S}_{(k +d ,0)}U , . . . ,\mathbb{S}_{(d ,k)}U\}$ as sets of $k +1$ elements. A priori, every $A_{t}$ is the direct sum of some elements of $\{\mathbb{S}_{(k +d ,0)}U , . . . ,\mathbb{S}_{(d ,k)}U\}$; to complete our proof it suffices to show that every $A_{t}$ coincides exactly with one element of the set $\{\mathbb{S}_{(k +d ,0)}U , . . . ,\mathbb{S}_{(d ,k)}U\}$ and we can assume, by induction, that $\{B_{0} , . . . ,B_{k -1}\} =\{\mathbb{S}_{(d +k -1 ,0)}U , . . . ,\mathbb{S}_{(d ,k -1)}U\}$ as sets of $k$ elements. 

Let us proceed by contradiction: if the previous statement is false, then there exists at least a non zero direct sum of some elements of $\{\mathbb{S}_{(k +d ,0)}U , . . . ,\mathbb{S}_{(d ,k)}U\}$, say $S ,$ and two distinct integers $\alpha  ,\beta  \in [0 ,k]$ such that $A_{\alpha } =S \oplus S_{\alpha }$ and $A_{\beta } =S \oplus S_{\beta }$ where $S_{\alpha }$ and $S_{\beta }$ are other direct sums of elements of $\{\mathbb{S}_{(k +d ,0)}U , . . . ,\mathbb{S}_{(d ,k)}U\}$ with $S_{\alpha } \cap S_{\beta } =0$ (possibly $S_{\alpha } =0 ,$ and/or $S_{\beta } =0$).

By Lemma \ref{lemdelta} we know that, for any $t =0 , . . . ,k -1$, we have: $\theta (B_{t}) \subseteq A_{t}$; $\delta (A_{t}) \subseteq B_{t}$ $;$ $\delta [\theta (B_{t})] \simeq B_{t}$  and $\delta (A_{k}) =0.$ Of course $\delta (A_{t})$ is a $GL(U)$-representation of $S^{k -1}U \otimes S^{d +1}U ,$ by Lemma \ref{lemdelta} it follows that it is non zero and it is contained in $B_{t}$ for any $t =0 , . . . ,k -1$. By induction assumption, every $B_{t}$ is an irreducible representation of $S^{k -1}U \otimes S^{d +1}U ,$ then we can conclude that $\delta (A_{t}) =B_{t}$ for any $t =0 , . . . ,k -1$, while $\delta (A_{k}) =0.$

Now let us apply $\delta $ to $A_{\alpha }$ and $A_{\beta } .$ If $\alpha  \leq k -1$ and $\beta  \leq k -1$ we get $B_{\alpha } =\delta (A_{\alpha }) =\delta (S \oplus S_{\alpha })$, hence $\delta (S) \subseteq B_{\alpha }$ and $\delta (S_{\alpha }) \subseteq B_{\alpha };$ analogously $B_{\beta } =\delta (A_{\beta }) =\delta (S \oplus S_{\beta })$, hence $\delta (S) \subseteq B_{\beta }$ and $\delta (S_{\beta }) \subseteq B_{\beta } .$ As $\delta (S) ,$ $\delta (S_{\alpha }) ,$ $\delta (S_{\beta })$ are $GL(U)$-representations of $S^{k -1}U \otimes S^{d +1}U$ and $B_{\alpha } \cap B_{\beta } =0$ the unique possibility is $\delta (S_{\alpha }) =B_{\alpha } ,$ $\delta (S_{\beta }) =B_{\beta } ,$ $\delta (S) =0.$ If $\alpha  \leq k -1$ and $\beta  =k$ we have $0 =\delta (A_{k}) =\delta (S \oplus S_{k}) ,$ hence $\delta (S) =0.$ The same argument runs if $\alpha  =k$ and $\beta  \leq k -1.$ In any case $\delta (S) =0$. This means that for any pair $A_{\alpha } =S \oplus S_{\alpha }$ and $A_{\beta } =S \oplus S_{\beta }$ as above, we have $\delta (S) =0.$ Hence $S \neq 0$ is the direct sum of some elements in $\{\mathbb{S}_{(k +d ,0)}U , . . . ,\mathbb{S}_{(d ,k)}U\}$ all contained in $\ker (\delta ) .$

 Therefore we have the following conditions:

- every $A_{t}$ is the direct sum of some elements of $\{\mathbb{S}_{(k +d ,0)}U , . . . ,\mathbb{S}_{(d ,k)}U\}$ for $t =0 , . . . ,k$;

- $\delta (A_{t}) =B_{t} \neq 0$ for $t =0 , . . . ,k -1$ and $\delta (A_{k}) =0$;

- at least one element in $\{\mathbb{S}_{(k +d ,0)}U , . . . ,\mathbb{S}_{(d ,k)}U\}$ is contained in $\ker (\delta )$;

\noindent but there are no sufficient elements in $\{\mathbb{S}_{(k +d ,0)}U , . . . ,\mathbb{S}_{(d ,k)}U\}$ to guarantee the above conditions, unless $S$ $ \supseteq $ $A_{k}$ (for instance, it could be $k =3 ,$ $A_{0} =\mathbb{S}_{(3 +d ,0)}U ,$ $A_{1} =\mathbb{S}_{(2 +d ,1)}U$, $A_{2} =S_{(1 +d ,2)}U \oplus S_{(d ,3)}U ,$ $A_{3} =S_{(d ,3)}U$).

However $S$ cannot contain $A_{k}$, otherwise $A_{k}$ would be also contained in some $A_{\alpha }$ with $\alpha  \in [0 ,k -1]$ and this is not possible, otherwise the same thing would be true also for $n =1 ,$ but we know that $A_{0} \oplus A_{1} \oplus  . . . \oplus A_{k}$ is a direct sum when $n =1,$ as a consequence of the fact that the $A_i$'s are distinct irreducible representations of $GL(U)$.
\end{proof}

Note that we can write the above decomposition in a different way in order to obtain the following two relations:

$S^{k -1}U \otimes S^{d -1}U =p_{k -1}(S^{d +k -2}U) \oplus {\displaystyle\sum _{i ,j =0...n}}\xi _{i ,j}(S^{k -2}U \otimes S^{d -2}U)$

$S^{k}U \otimes S^{d}U =p_{k}(S^{d +k}U) \oplus {\displaystyle\sum _{i ,j =0...n}}\xi _{i ,j}(S^{k -1}U \otimes S^{d -1}U) =$

$ =p_{k}(S^{d +k}U) \oplus {\displaystyle\sum _{i ,j =0...n}}\xi _{i ,j}p_{k -1}(S^{d +k -2}U) \oplus {\displaystyle\sum _{i ,j =0...n;r ,s =0...n}}\xi _{i ,j}\xi _{r ,s}(S^{k -2}U \otimes S^{d -2}U)$.

The following Propositions give some relations between the previously introduced operators.

\begin{proposition}
\label{propinter}In the previous notation, for any $1 \leq k \leq d$ we have:
\end{proposition}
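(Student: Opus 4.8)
The plan is to prove the asserted identities among $p_k$, $\delta$, $\theta$, $\xi_{i,j}$, $D_{i,j}$ and $\psi_{k,t}$ by the same two complementary techniques already deployed above: direct computation on the monomial basis, and Schur's Lemma applied summand by summand to the Pieri decomposition of Proposition \ref{propPieri}.

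For the part of the statement that can be checked on generators, I would proceed verbatim as in the proof of Lemma \ref{lemdelta}: expand a general element of $S^{k}U\otimes S^{d}U$ as $\sum c_{I,J}\,x^{I}\otimes x^{J}$ (or, when only the Cartan summand $p_{k}(S^{d+k}U)$ intervenes, start from $p_{k}(q)=\frac{d!}{(d+k)!}\sum_{|I|=k}\binom{k}{I}x^{I}\otimes\partial^{I}q$), apply the operator in question term by term, and reindex the resulting multi-index sums using $\binom{k}{I}i_{r}=\binom{k}{I-1_{r}}$ and the Euler relation $x^{H}=\frac{1}{|H|}\sum_{j}x_{j}\partial_{j}(x^{H})$. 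This mechanical bookkeeping disposes of every relation of the shape ``operator composed with a polarization equals a scalar times another polarization'', and in particular of any claim that $\delta\circ\theta$ or $\theta\circ\delta$ acts as an explicit scalar on the Cartan piece.

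For the relations asserting that a composite operator is a prescribed scalar on each piece $A_{t}=\sum\xi_{i,j}\cdots\xi_{r,s}\,p_{k-t}(S^{d+k-2t}U)$ of the Pieri decomposition, I would use that, by Proposition \ref{propPieri}, each $A_{t}$ is an \emph{irreducible} $GL(U)$-representation and that all operators in play ($p_{k}$, $\delta$, $\theta$, $\xi_{i,j}$, $D_{i,j}$, $\psi_{k,t}$) are $GL(U)$-linear; hence Schur's Lemma forces the composite to act on $A_{t}$ as a scalar, and one only has to compute that scalar on one convenient vector of $A_{t}$. The natural test vectors are the ``diagonal'' ones: $l^{d+k}$, its polarization $p_{k}(l^{d+k})=l^{k}\otimes l^{d}$, and $\xi_{i,j}\cdots\xi_{r,s}$ applied to such vectors; on these, the normalizations of Lemma \ref{lemdelta} ($\theta[p_{k}(q)]=d\,p_{k+1}(q)$, $\delta[p_{k+1}(q)]=(k+1)p_{k}(q)$, $\delta\circ\theta=\theta\circ\delta=d(k+1)$ on the relevant polarized spaces), the commutations $[\delta,\xi_{i,j}]=[\theta,\xi_{i,j}]=0$, and the vanishing $D_{r,s}\circ p_{k}=0$ proved in item 8) make every evaluation immediate. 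If the statement also pins down $\operatorname{Im}(\psi_{k,t})$ or relates $\psi_{k,t}$ to $p_{k}$ and $\delta$, I would first locate $\operatorname{Im}(\psi_{k,t})$ inside the decomposition by noting that $\psi_{k,t}$ sends the Veronese-type vector $l\otimes l^{t}$ to $l^{k}\otimes l^{t-k+1}$, so that $\operatorname{Im}(\psi_{k,t})$ contains $p_{k}(S^{t+1}U)$ and, being $GL(U)$-invariant, is a sum of Pieri summands, which a dimension count then identifies; intersection statements such as $p_{k}(S^{d+k}U)\cap\sum\xi_{i,j}(\cdots)=0$ would instead be handled, as in the proof of Proposition \ref{propPieri}, by exhibiting an operator (the multiplication map $m$, or an appropriate $D_{i,j}$) that vanishes on one subspace and is injective on the other.

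The main obstacle is not conceptual but clerical: controlling the multi-index reindexings so that the surviving terms (those with $i_{r}\ge 1$, and so on) are accounted for correctly, and, on the representation-theoretic side, making sure that the piece on which Schur's Lemma is invoked is genuinely irreducible — which is exactly what Proposition \ref{propPieri} supplies — and that the proportionality constant is extracted from a vector on which each operator has an obviously computable value. Once the constants are fixed on the diagonal vectors $l^{d+k}$ and $l^{k}\otimes l^{d}$, the identities extend to all of $S^{k}U\otimes S^{d}U$ by linearity and $GL(U)$-equivariance.
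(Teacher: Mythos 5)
Your proposal never actually engages with what Proposition \ref{propinter} asserts, namely the two identifications $p_{k}(S^{d+k}U)=\bigcap_{i,j}\ker(D_{i,j})$ and $p_{k}(S^{d+k}U)\oplus\sum\xi_{i,j}p_{k-1}(S^{d+k-2}U)=\bigcap_{i,j;r,s}\ker(D_{i,j}\circ D_{r,s})$; it reads as a menu of techniques for several hypothetical identities (scalars of $\delta\circ\theta$, the image of $\psi_{k,t}$, vanishing of intersections), none of which is the statement at hand. More seriously, the representation-theoretic half of your strategy rests on the claim that $\xi_{i,j}$ and $D_{i,j}$ are $GL(U)$-linear. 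They are not, for $n\geq 2$: each is contraction or multiplication by a \emph{fixed} element of $\bigwedge^{2}U^{\vee}$ or $\bigwedge^{2}U$, and $GL(U)$ moves that element. Only the full families are equivariant, which is why the paper's Lemma \ref{leminters kern} proves invariance of $\bigcap_{i,j}\ker(D_{i,j})$ while explicitly warning that a single $\ker(D_{i,j})$ is not invariant. Consequently Schur's Lemma cannot be applied to $D_{i,j}\circ D_{r,s}$ restricted to an irreducible summand, and the ``compute the scalar on one diagonal vector'' shortcut is unavailable.

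Two substantive steps of the actual proof are therefore missing. First, the inclusion $\sum\xi_{i,j}p_{k-1}(S^{d+k-2}U)\subseteq\bigcap\ker(D_{i,j}\circ D_{r,s})$ is not ``clerical'': one must push $D_{r,s}$ past $\xi_{i,j}$ (a six-case commutator computation) and then recognize, via Lemma \ref{lemmapk}, that the correction terms $a\otimes x_{p}\partial_{q}b+x_{p}\partial_{q}a\otimes b$ reassemble into $p_{k-1}(x_{p}\partial_{q}f)$, hence are killed by the second $D_{u,v}$. Second, for the reverse inclusions you must exclude every higher Pieri summand $A_{t}$ from the kernel intersection; the paper does this by observing that the intersections are $GL(U)$-invariant, hence sums of Pieri summands, and that $A_{t}\subseteq\ker(D_{0,1})$ (resp.\ $\ker(D_{0,1}^{2})$) would survive specialization of all variables except $x_{0},x_{1}$, contradicting the known $n=1$ case (Corollary 3.2 of \cite{A-R1}). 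Your proposal contains no counterpart to either step, so as written it does not prove the proposition.
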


$\operatorname{1)\,p}_{k}(S^{d +k}U) ={\displaystyle\bigcap _{i ,j =0...n}}\ker (D_{i ,j})$

$\operatorname{2)\,p}_{k}(S^{d +k}U) \oplus {\displaystyle\sum _{i ,j =0...n}}\xi _{i ,j}p_{k -1}(S^{d +k -2}U) ={\displaystyle\bigcap _{i ,j =0...n;r ,s =0...n}}\ker (D_{i ,j} \circ D_{r ,s})$.

Before proving Proposition \ref{propinter} we need the following two Lemmas.

\begin{lemma}
\label{lemmapk}Let $f \in S^{t +\varepsilon }U$ with $t \geq 1$ and $\varepsilon  \geq 1$. Let us consider $p_{t}(f) =\frac{\varepsilon  !}{(\varepsilon  +t) !}\sum _{\vert I\vert  =t}\binom{t}{I}x^{I} \otimes  \partial ^{I}f \in S^{t}U \otimes S^{\varepsilon }U;$ if we transform any element $a \otimes b$ of the sum with respect to $\vert I\vert  =t$ in the following way, for a fixed pair of integers $(p,q) \in [0 ,n]$:

\begin{center}$a \otimes b \rightarrow $ $a \otimes x_{p} \partial _{q}b +x_{p} \partial _{q}a \otimes b$\end{center}\par
\noindent then we get the element $p_{t}(x_{p} \partial _{q}f) \in S^{t}U \otimes S^{\varepsilon }U .$
\end{lemma}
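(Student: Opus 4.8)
The plan is to prove the identity by a direct computation, using the explicit formula for $p_{t}$ recalled earlier together with the Leibniz rule for the operators $\partial^{I}$; no representation theory is needed, the statement being a pure polynomial identity.

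First I would write the left-hand side explicitly. Applying the prescribed substitution $a\otimes b\mapsto a\otimes x_{p}\partial_{q}b+x_{p}\partial_{q}a\otimes b$ termwise to
$$p_{t}(f)=\frac{\varepsilon!}{(\varepsilon+t)!}\sum_{\vert I\vert=t}\binom{t}{I}x^{I}\otimes\partial^{I}f$$
produces
$$\frac{\varepsilon!}{(\varepsilon+t)!}\sum_{\vert I\vert=t}\binom{t}{I}\Bigl(x^{I}\otimes x_{p}\partial_{q}\partial^{I}f+x_{p}\partial_{q}(x^{I})\otimes\partial^{I}f\Bigr).$$
Next I would expand the target
$$p_{t}(x_{p}\partial_{q}f)=\frac{\varepsilon!}{(\varepsilon+t)!}\sum_{\vert I\vert=t}\binom{t}{I}x^{I}\otimes\partial^{I}(x_{p}\partial_{q}f),$$
using the Leibniz identity $\partial^{I}(x_{p}g)=x_{p}\partial^{I}g+i_{p}\partial^{I-1_{p}}g$ with $g=\partial_{q}f$, so that $\partial^{I}(x_{p}\partial_{q}f)=x_{p}\partial_{q}\partial^{I}f+i_{p}\,\partial_{q}\partial^{I-1_{p}}f$.

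The summand $x^{I}\otimes x_{p}\partial_{q}\partial^{I}f$ in the expansion of the target coincides verbatim with the first summand on the left, so it only remains to match
$$\sum_{\vert I\vert=t}\binom{t}{I}\,x_{p}\partial_{q}(x^{I})\otimes\partial^{I}f\qquad\text{and}\qquad\sum_{\vert I\vert=t}\binom{t}{I}\,i_{p}\,x^{I}\otimes\partial_{q}\partial^{I-1_{p}}f.$$
In the left sum I would substitute $\partial_{q}(x^{I})=i_{q}x^{I-1_{q}}$ and reindex by $J:=I-1_{q}+1_{p}$, which is a bijection between $\{I:\vert I\vert=t,\ i_{q}\ge1\}$ and $\{J:\vert J\vert=t,\ j_{p}\ge1\}$; the excluded multi-indices carry a vanishing coefficient, and this also covers the degenerate case $p=q$, where $J=I$. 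Combining the reindexing with two applications of the identity $\binom{t}{I}i_{r}=\binom{t}{I-1_{r}}$ noted after the definition of $p_{k}$ — once as $\binom{t}{I}i_{q}=\binom{t}{I-1_{q}}$ and once as $\binom{t}{J-1_{p}}=\binom{t}{J}j_{p}$, together with $x_{p}x^{J-1_{p}}=x^{J}$ and $\partial^{J-1_{p}}\partial_{q}=\partial_{q}\partial^{J-1_{p}}$ — transforms the left sum precisely into the right sum, completing the proof.

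I expect the only genuine obstacle to be the multi-index bookkeeping in the reindexing step, in particular checking that the terms indexed by multi-indices with a negative entry are harmless (each is annihilated either by the corresponding partial derivative or by a zero combinatorial factor) and that the coefficient identity $\binom{t}{I}i_{r}=\binom{t}{I-1_{r}}$ is being applied on multi-indices of the correct total degree; everything else is routine.
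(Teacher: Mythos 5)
Your proof is correct and follows essentially the same route as the paper's: a direct expansion of both sides using the explicit formula for $p_{t}$, the Leibniz rule $\partial^{I}(x_{p}g)=x_{p}\partial^{I}g+i_{p}\partial^{I-1_{p}}g$, and a multi-index reindexing together with the identity $\binom{t}{I}i_{r}=\binom{t}{I-1_{r}}$. The only (immaterial) difference is that the paper reindexes both leftover sums to a common expression over $\vert J\vert=t-1$, whereas you match them directly via the bijection $J=I-1_{q}+1_{p}$.
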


\begin{proof}
Let us apply the above transformation to any addend of $p_{t}(f)$ and, by forgetting the coefficient $\frac{\varepsilon  !}{(\varepsilon  +t) !}$ , we get:

$\sum _{\vert I\vert  =t}\binom{t}{I}x^{I} \otimes (x_{p} \partial ^{I +1_{q}}f) +\sum _{\vert I\vert  =t;i_{q} \geq 1}\binom{t}{I -1_{q}}x^{I +1_{p} -1_{q}} \otimes  \partial ^{I}f =$

$\sum _{\vert I\vert  =t}\binom{t}{I}x^{I} \otimes (x_{p} \partial ^{I +1_{q}}f) +\sum _{\vert J\vert  =t -1}\binom{t -1}{J}x^{J +1_{p}} \otimes  \partial ^{J +1_{q}}f .$

On the other hand, if we determine $p_{t}(x_{p} \partial _{q}f)$ and we forget the same coefficient $\frac{\varepsilon  !}{(\varepsilon  +t) !}$, we have:

$\sum _{\vert I\vert  =t}\binom{t}{I}x^{I} \otimes  \partial ^{I}(x_{p} \partial _{q}f)$$ =$

$ =\sum _{\vert I\vert  =t;i_{p} \geq 1}\binom{t}{I}x^{I} \otimes (i_{p} \partial ^{I -1_{p} +1_{q}}f +x_{p} \partial ^{I +1_{q}}f) +\sum _{\vert I\vert  =t;i_{p} =0}\binom{t}{I}x^{I} \otimes x_{p} \partial ^{I +1_{q}}f =$

$ =\sum _{\vert I\vert  =t}\binom{t}{I}x^{I} \otimes x_{p} \partial ^{I +1_{q}}f +\sum _{\vert I\vert  =t;i_{p} \geq 1}\binom{t}{I -1_{p}}x^{I} \otimes  \partial ^{I -1_{p} +1_{q}}f =$

$ =\sum _{\vert I\vert  =t}\binom{t}{I}x^{I} \otimes x_{p} \partial ^{I +1_{q}}f +\sum _{\vert J\vert  =t -1}\binom{t -1}{J}x^{J +1_{p}} \otimes  \partial ^{J +1_{q}}f$

which is exactly the previous element in $S^{t}U \otimes S^{\varepsilon }U .$
\end{proof}

\begin{lemma}
\label{leminters kern} Let us consider the operators $D_{i ,j} :S^{k}U \otimes S^{d}U \rightarrow S^{k -1}U \otimes S^{d -1}U$ for any  $0 \leq i <j \leq n$ with $k \geq 2 ,$ $d \geq 2.$ 

Then ${\displaystyle\bigcap _{i ,j =0...n}}\ker (D_{i ,j})$ and ${\displaystyle\bigcap _{i ,j =0...n;r ,s =0...n}}\ker (D_{i ,j} \circ D_{r ,s})$ are subspaces $GL(U)$-invariant.
\end{lemma}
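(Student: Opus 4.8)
The plan is to realise each operator $D_{i,j}$ as one coordinate of a single $GL(U)$-linear map, so that the intersections in the statement become kernels of $GL(U)$-linear maps and are thus automatically $GL(U)$-invariant. The reason a single $D_{i,j}$ fails to be $GL(U)$-linear is precisely that the element $\partial_i\otimes\partial_j-\partial_j\otimes\partial_i$ spans a line of $\bigwedge^{2}U^{\vee}$ which is not fixed by $GL(U)$; but the collection $\{\partial_i\wedge\partial_j\}_{0\le i<j\le n}$ spans all of $\bigwedge^{2}U^{\vee}$, and this space \emph{is} $GL(U)$-invariant, which is what makes the assembled map equivariant.

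First I would introduce the natural contraction $\Phi\colon\bigwedge\nolimits^{2}U^{\vee}\otimes(S^{k}U\otimes S^{d}U)\to S^{k-1}U\otimes S^{d-1}U$ defined on generators by $\Phi((\omega_{1}\wedge\omega_{2})\otimes a\otimes b)=\omega_{1}(a)\otimes\omega_{2}(b)-\omega_{2}(a)\otimes\omega_{1}(b)$; it is well defined (bilinear and alternating in $\omega_{1},\omega_{2}$) and $GL(U)$-linear, being built from the natural contractions $U^{\vee}\otimes S^{m}U\to S^{m-1}U$, and by construction $\Phi((\partial_i\wedge\partial_j)\otimes v)=D_{i,j}(v)$. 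Currying $\Phi$ in its first slot and using the canonical $GL(U)$-equivariant isomorphism $\operatorname{Hom}(\bigwedge^{2}U^{\vee},W)\cong\bigwedge^{2}U\otimes W$, I obtain a $GL(U)$-linear map $\widehat\Phi\colon S^{k}U\otimes S^{d}U\to\bigwedge\nolimits^{2}U\otimes S^{k-1}U\otimes S^{d-1}U$ which, in the dual bases $\{x_i\wedge x_j\}$ of $\bigwedge^{2}U$ and $\{\partial_i\wedge\partial_j\}$ of $\bigwedge^{2}U^{\vee}$, is $v\mapsto\sum_{i<j}(x_i\wedge x_j)\otimes D_{i,j}(v)$. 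Since the $x_i\wedge x_j$ are linearly independent, $\ker\widehat\Phi=\bigcap_{i<j}\ker(D_{i,j})$, and this is $GL(U)$-invariant as the kernel of a $GL(U)$-linear map; this settles the first intersection. (One can avoid the duality step and instead define $\widehat\Phi$ as $p_{1}\otimes p_{1}$ followed by the antisymmetrisation $U\otimes U\to\bigwedge^{2}U$ of the two $U$-factors, since the polarisations are $GL(U)$-linear; evaluating on the generators $\ell^{k}\otimes m^{d}$ then shows the coordinates are proportional to the $D_{i,j}$.)

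For the second intersection I would iterate: letting $\widehat\Phi'\colon S^{k-1}U\otimes S^{d-1}U\to\bigwedge\nolimits^{2}U\otimes S^{k-2}U\otimes S^{d-2}U$ be the analogous map with $(k,d)$ replaced by $(k-1,d-1)$ (here the hypotheses $k,d\ge 2$ enter), the composition $(\operatorname{id}_{\bigwedge^{2}U}\otimes\widehat\Phi')\circ\widehat\Phi$ is again $GL(U)$-linear and sends $v$ to $\sum_{i<j}\sum_{r<s}(x_i\wedge x_j)\otimes(x_r\wedge x_s)\otimes D_{r,s}(D_{i,j}(v))$; since the tensors $(x_i\wedge x_j)\otimes(x_r\wedge x_s)$ are linearly independent, its kernel equals $\bigcap_{i,j;r,s}\ker(D_{i,j}\circ D_{r,s})$, which is therefore $GL(U)$-invariant.

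I do not expect a serious obstacle. The only slightly delicate points are checking that $\Phi$ descends to $\bigwedge^{2}U^{\vee}$ and is $GL(U)$-linear, and the routine bookkeeping identifying the coordinates of $\widehat\Phi$ (resp. of the iterated map) with the $D_{i,j}$ (resp. the $D_{i,j}\circ D_{r,s}$) up to harmless nonzero scalars — all of which is immediate from the definitions.
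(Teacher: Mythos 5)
Your proposal is correct and rests on the same key observation as the paper: the family $\{D_{i,j}\}$ is exactly the contraction against the $GL(U)$-stable space $\bigwedge^{2}U^{\vee}$, so the common kernel is $GL(U)$-invariant. The paper verifies this directly from the equivariance identity $g_{2}[\omega(v)]=g_{3}(\omega)[g_{1}(v)]$ rather than currying into a single map $\widehat\Phi$, but this is only a difference in packaging, not in substance.
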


\begin{proof}
 Note that $K : ={\displaystyle\bigcap _{i ,j =0...n}}\ker (D_{i ,j})$ can be described as follows:

$K =\{p \otimes q \in S^{k}U \otimes S^{d}U\vert $ $\omega (p \otimes q) =0$ for any $\omega  \in \bigwedge ^{2}U^{ \vee }\}$.

For any $g \in GL(U)$, let $g_{1} ,g_{2} ,g_{3}$ be the induced actions on $S^{k}U \otimes S^{d}U ,$ $S^{k -1}U \otimes S^{d -1}U$ and $\bigwedge ^{2}U^{ \vee } ,$ respectively. We know that, for any $p \otimes q \in S^{k}U \otimes S^{d}U ,$ for any $g \in SL(U)$ and for any $\omega  \in \bigwedge ^{2}U^{ \vee } ,$ we have: $g_{2}[\omega (p \otimes q)] =g_{3}(\omega )[g_{1}(p \otimes q)] .$ Hence, if $p \otimes q \in K ,$ we have $g_{3}(\omega )[g_{1}(p \otimes q)] =0$. 

Let $p \otimes q$ be any element in $K$ and let $g \in GL(U);$ we have to show that $g_{1}(p \otimes q) \in K ,$ i.e. that, for any $\omega  \in \bigwedge ^{2}U^{ \vee }$, we have $\omega [g_{1}(p \otimes q)] =0.$ It suffices to remark that there is a suitable $\omega ^{ \prime } \in \bigwedge ^{2}U^{ \vee }$ such that $\omega  =g_{3}(\omega ^{ \prime })$ (it suffices to choose $\omega ^{ \prime } =g_{3}^{ -1}(\omega )$); hence $\omega [g_{1}(p \otimes q)] =g_{3}(\omega ^{ \prime })[g_{1}(p \otimes q)] =0$.

For the second part of the Lemma we can argue in the same way. Note that $\ker (D_{i ,j})$ is not $GL(U)$-invariant for a fixed pair $i <j$ when $n \geq 2.$
\end{proof}

\begin{proof}
(of Proposition \ref{propinter}) Let us recall that Proposition \ref{propinter} is true when $n =1 :$ see Corollary 3.2 of \cite{A-R1} ; let us assume $n \geq 2.$

1) By the definition of $D_{i ,j}$ it is obvious that $p_{k}(S^{d +k}U) \subseteq {\displaystyle\bigcap _{i ,j =1...n}}\ker (D_{i ,j})$. On the other hand ${\displaystyle\bigcap _{i ,j =0...n}}\ker (D_{i ,j})$ is a $GL(U)$-representation of $S^{k}U \otimes S^{d}U$ by Lemma \ref{leminters kern}, hence, by Proposition \ref{propPieri}, it is the direct sum of $p_{k}(S^{d +k}U)$ and some vector space $A_{t} : =\sum \xi _{i ,j} . . .\xi _{r ,s}p_{k -t}(S^{d +k -2t}U)$ for $t =1 , . . . ,k$. This fact would imply that $A_{t}$ $ \subseteq \ker (D_{0 ,1})$ for some $t \in [1 ,k] ,$ also in case $n =1 ,$ as one can see by setting equal to $0$ all the variables except $x_0,x_1$. This is not possible because Proposition \ref{propinter} is true when $n =1.$

2) Let us consider, more generically, the action of any operator $D_{r ,s}$$ \circ \xi _{i ,j}$ acting on $a \otimes b \in S^{p}U \otimes S^{q}U .^{}$ Let $a_{r}$ be $ \partial _{r}(a) ,$ let $a_{s}$ be $ \partial _{s}(a)$ and so on. Then we have:

$D_{r ,s}[\xi _{i ,j}(a \otimes b)] =D_{r ,s}(x_{i}a \otimes x_{j}b -x_{j}a \otimes x_{i}b) =$

$ =2[ \partial _{r}(x_{i}) \partial _{s}(x_{j}) - \partial _{s}(x_{i}) \partial _{r}(x_{j})]a \otimes b +$

$ +a \otimes [( \partial _{s}(x_{j})x_{i} - \partial _{s}(x_{i})x_{j})b_{r} +( \partial _{r}(x_{i})x_{j} - \partial _{r}(x_{j})x_{i})b_{s}] +$

$ +[( \partial _{s}(x_{j})x_{i} - \partial _{s}(x_{i})x_{j})a_{r} +( \partial _{r}(x_{i})x_{j} - \partial _{r}(x_{j})x_{i})a_{s}] \otimes b +\xi _{i ,j}[D_{r ,s}(a \otimes b)] .$

Now, by recalling that $r <s$ and $i <j ,$ we can distinguish six cases:

\medskip

$i$$)$ $i \neq r ,$ $j \neq s ,$ $i \neq s ,$ $j \neq r ,$ then

$D_{r ,s}[\xi _{i ,j}(a \otimes b)] =\xi _{i ,j}[D_{r ,s}(a \otimes b)]$

\medskip

$ii)$ $i =r ,$ $j =s ,$ $i \neq s ,$ $j \neq r ,$ then 

$D_{r ,s}[\xi _{i ,j}(a \otimes b)] =2a \otimes b +a \otimes (x_{i}b_{r} +x_{j}b_{s}) +(x_{i}a_{r} +x_{j}a_{s}) \otimes b +\xi _{i ,j}[D_{r ,s}(a \otimes b)]$

\medskip

$iii$$)$ $i =r ,$ $j \neq s ,$ $i \neq s ,$ $j \neq r ,$ then

$D_{r ,s}[\xi _{i ,j}(a \otimes b)] =a \otimes x_{j}b_{s} +x_{j}a_{s} \otimes b +\xi _{i ,j}[D_{r ,s}(a \otimes b)]$

\medskip

$iv)$ $i \neq r ,$ $i =s ,$ $j \neq s ,$ $j \neq r ,$ then

$D_{r ,s}[\xi _{i ,j}(a \otimes b)] = -a \otimes x_{j}b_{r} -x_{j}a_{r} \otimes b +\xi _{i ,j}[D_{r ,s}(a \otimes b)]$

\medskip

$v)$ $i \neq r$, $i \neq s ,$ $j \neq s ,$ $j =r ,$ then

$D_{r ,s}[\xi _{i ,j}(a \otimes b)] = -a \otimes x_{i}b_{s} -x_{i}a_{s} \otimes b +\xi _{i ,j}[D_{r ,s}(a \otimes b)]$

\medskip

$vi)$ $i \neq r ,$ $i \neq s ,$ $j =s ,$ $j \neq r ,$ then

$D_{r ,s}[\xi _{i ,j}(a \otimes b)] =a \otimes x_{i}b_{r} +x_{i}a_{r} \otimes b +\xi _{i ,j}[D_{r ,s}(a \otimes b)]$.

\medskip

Now, let us apply $D_{r ,s} \circ \xi _{i ,j}$ to the generic element $p_{k -1}(f) \in p_{k -1}(S^{d +k -2}U)$,  i.e. let $f$ be any polynomial in $S^{d +k -2}U .$ Let us use Lemma \ref{lemmapk} and let us recall that $D_{r ,s}(p_{k -1}(f)) =0$ for any $0 \leq r <s \leq n$  and $k \geq 1.$ We get:

$i)$ $D_{r ,s}[\xi _{i ,j}(p_{k -1}(f))] =0$

$ii)$ $D_{r ,s}[\xi _{i ,j}(p_{k -1}(f))] =2p_{k -1}(f) +p_{k -1}(x_{i}f_{r})$ $ +p_{k -1}(x_{j}f_{s})$

$iii)$ $D_{r ,s}[\xi _{i ,j}(p_{k -1}$$(f)] =p_{k -1}(x_{j}f_{s})$

$iv)$ $D_{r ,s}[\xi _{i ,j}(p_{k -1}(f)] = -p_{k -1}(x_{j}f_{r})$

$v)$ $D_{r ,s}[\xi _{i ,j}(p_{k -1}(f)] = -p_{k -1}(x_{i}f_{s})$

$vi)$ $D_{r ,s}[\xi _{i .j}(p_{k -1}(f)] =p_{k -1}(x_{i}f_{r})$ .

It follows that, in any case, $D_{u ,v}\{D_{r ,s}[\xi _{i ,j}(p_{k -1}(f)]\} =0$ for any pair of integers $u <v ,$ $r <s ,$ $i <j$ and this fact proves that

 $p_{k}(S^{d +k}U) \oplus {\displaystyle\sum _{i ,j =0...n}}\xi _{i ,j}p_{k -1}(S^{d +k -2}U) \subseteq {\displaystyle\bigcap _{i ,j =0...n;r ,s =0...n}}\ker (D_{i ,j} \circ D_{r ,s})$.

For the other inclusion we can argue as in item 1).

\end{proof}

\begin{proposition}
\label{propIm-ker}Let us consider the maps $\psi _{k ,d +k -1} :U \otimes S^{d +k -1}U \rightarrow S^{k}U \otimes S^{d}U$ for any $k \geq 1$ and $d \geq k \geq 1$ then we have:

\begin{center}$\ensuremath{\operatorname*{Im}}(\psi _{k ,d +k -1}) ={\displaystyle\bigcap _{i ,j =0...n;r ,s =0...n}}\ker (D_{i ,j} \circ D_{r ,s})$.\end{center}\par
\end{proposition}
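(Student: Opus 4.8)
The plan is to reduce the statement to the second part of Proposition \ref{propinter}, which already identifies $\bigcap_{i,j,r,s}\ker(D_{i,j}\circ D_{r,s})$ with $W:=p_k(S^{d+k}U)\oplus\sum_{i,j}\xi_{i,j}p_{k-1}(S^{d+k-2}U)$; thus it suffices to prove $\operatorname{Im}(\psi_{k,d+k-1})=W$. Recall from the proof of Proposition \ref{propPieri} that the two summands of $W$ are the irreducible $GL(U)$-representations $\mathbb{S}_{(d+k,0)}U$ and $\mathbb{S}_{(d+k-1,1)}U$, so that $W=\mathbb{S}_{(d+k,0)}U\oplus\mathbb{S}_{(d+k-1,1)}U$ is the sum of the first two summands in the Pieri decomposition $S^kU\otimes S^dU=\bigoplus_{t=0}^{k}\mathbb{S}_{(d+k-t,t)}U$. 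Since $\psi_{k,d+k-1}$ is the composition of $1\otimes p_k\colon U\otimes S^{d+k-1}U\to U\otimes S^kU\otimes S^{d-1}U$ with the multiplication of the first and third factors, the explicit formula for $p_k$ gives, for $l\in U$ and $g\in S^{d+k-1}U$,
\[
\psi_{k,d+k-1}(l\otimes g)=\frac{(d-1)!}{(d+k-1)!}\sum_{|I|=k}\binom{k}{I}\,x^{I}\otimes l\,\partial^{I}g .
\]

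To prove $\operatorname{Im}(\psi_{k,d+k-1})\subseteq W$ I would argue representation-theoretically. As $\psi_{k,d+k-1}$ is $GL(U)$-linear, its image is simultaneously a subrepresentation of $S^kU\otimes S^dU$ and a quotient of $U\otimes S^{d+k-1}U=\mathbb{S}_{(d+k,0)}U\oplus\mathbb{S}_{(d+k-1,1)}U$. Both of these modules are multiplicity free by the Pieri rule; hence every irreducible constituent of $\operatorname{Im}(\psi_{k,d+k-1})$ is isomorphic to $\mathbb{S}_{(d+k,0)}U$ or to $\mathbb{S}_{(d+k-1,1)}U$, while, being a subrepresentation of a multiplicity-free module, $\operatorname{Im}(\psi_{k,d+k-1})$ is the direct sum of the summands $\mathbb{S}_{(d+k-t,t)}U$ that actually occur in it. Therefore $\operatorname{Im}(\psi_{k,d+k-1})\subseteq\mathbb{S}_{(d+k,0)}U\oplus\mathbb{S}_{(d+k-1,1)}U=W$.

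For the opposite inclusion I would check that the image meets each of the two irreducible summands of $W$. From the defining property $p_k(l^{\,m+k})=l^{k}\otimes l^{\,m}$ one obtains $\psi_{k,d+k-1}(l\otimes l^{\,d+k-1})=p_k(l^{\,d+k})$ for every $l\in U$; since the powers $l^{\,d+k}$ span $S^{d+k}U$ and $p_k$ is linear, $p_k(S^{d+k}U)\subseteq\operatorname{Im}(\psi_{k,d+k-1})$. Next, let $m\colon U\otimes S^{d+k-1}U\to S^{d+k}U$ be the multiplication map, so that $\ker m=\mathbb{S}_{(d+k-1,1)}U$, and evaluate $\psi_{k,d+k-1}$ on $\xi:=x_0\otimes x_1^{\,d+k-1}-x_1\otimes x_0x_1^{\,d+k-2}\in\ker m$. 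Applying the Leibniz rule $\partial^{I}(x_rh)=x_r\partial^{I}h+i_r\partial^{I-1_r}h$ and the identity $\binom{k}{I}i_r=\binom{k}{I-1_r}$ (both recalled in Section \ref{sec:background}) to the formula above, the terms containing the factor $x_0x_1-x_1x_0$ cancel and only the multi-indices $I=(0,k,0,\dots,0)$ and $I=(1,k-1,0,\dots,0)$ contribute, giving
\[
\psi_{k,d+k-1}(\xi)=\frac{k}{d+k-1}\bigl(x_1^{\,k}\otimes x_0x_1^{\,d-1}-x_0x_1^{\,k-1}\otimes x_1^{\,d}\bigr)\neq 0 ,
\]
because the two pure tensors are distinct elements of a monomial basis of $S^kU\otimes S^dU$. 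Hence $\psi_{k,d+k-1}$ is nonzero on the irreducible representation $\ker m=\mathbb{S}_{(d+k-1,1)}U$, so by Schur's lemma $\psi_{k,d+k-1}(\ker m)$ is an isomorphic copy of $\mathbb{S}_{(d+k-1,1)}U$ inside the multiplicity-free module $S^kU\otimes S^dU$, and therefore coincides with $\sum_{i,j}\xi_{i,j}p_{k-1}(S^{d+k-2}U)$. Combining the two observations, $\operatorname{Im}(\psi_{k,d+k-1})\supseteq p_k(S^{d+k}U)\oplus\sum_{i,j}\xi_{i,j}p_{k-1}(S^{d+k-2}U)=W$; together with the previous paragraph this gives $\operatorname{Im}(\psi_{k,d+k-1})=W$, and a reference to the second part of Proposition \ref{propinter} concludes the proof.

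The only step that is not formal representation theory is the non-vanishing $\psi_{k,d+k-1}(\xi)\neq 0$: it relies on the elementary but slightly delicate multi-index bookkeeping indicated above, and the whole point of choosing the test element with $g=x_1^{\,d+k-2}$ is that it collapses the sum to two non-cancelling monomial terms.
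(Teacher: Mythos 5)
Your proof is correct and follows essentially the same route as the paper's: both decompose $U\otimes S^{d+k-1}U$ and $S^kU\otimes S^dU$ via Pieri, use $GL(U)$-equivariance and the multiplicity-freeness of these decompositions to pin down $\operatorname{Im}(\psi_{k,d+k-1})$ as $p_k(S^{d+k}U)\oplus\sum\xi_{i,j}p_{k-1}(S^{d+k-2}U)$, and then invoke Proposition \ref{propinter}(2). The only difference is that you substantiate with explicit test elements (and a correct multi-index computation) the nonvanishing of $\psi_{k,d+k-1}$ on each irreducible summand of the source, which the paper asserts as immediate.
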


\begin{proof}
According to Pieri decompositions, we have:

\begin{center}$U \otimes S^{d +k -1}U =p_{1}(S^{d +k}U) \oplus {\displaystyle\sum _{i ,j =0...n}}\xi _{i ,j}(S^{d +k -2}U)$\end{center}\par

\begin{center}$S^{k}U \otimes S^{d}U =p_{k}(S^{d +k}U) \oplus {\displaystyle\sum _{i ,j =0...n}}\xi _{i ,j}p_{k -1}(S^{d +k -2}U) \oplus ...$\end{center}\par
If we restrict $\psi _{k ,d +k -1}$ to the two components of the Pieri decomposition of $U \otimes S^{d +k -1}U$ we see immediately that

\begin{center}$0 \neq \psi _{k ,d +k -1}[p_{1}(S^{d +k}U)] \subseteq p_{k}(S^{d +k}U)$\end{center}\par
\begin{center}$0 \neq \text{}\psi _{k ,d +k -1}[{\displaystyle\sum _{i ,j =0...n}}\xi _{i ,j}(S^{d +k -2}U)] \subseteq {\displaystyle\sum _{i ,j =0...n}}\xi _{i ,j}p_{k -1}(S^{d +k -2}U)$\end{center}\par
\noindent moreover, as $Im(\psi _{k ,d +k -1})$  is a subspace $GL(U)$-invariant giving rise to a $GL(U)$-representation of $U \otimes S^{d +k -1}U$ and the components of the Pieri decomposition are all the irreducible $GL(U)$-representations of the vector spaces $S^{a}U \otimes S^{b}U$, we get that

\begin{center}$\psi _{k ,d +k -1}[p_{1}(S^{d +k}U)] =p_{k}(S^{d +k}U)$\end{center}\par
\begin{center}$\psi _{k ,d +k -1}[{\displaystyle\sum _{i ,j =0...n}}\xi _{i ,j}(S^{d +k -2}U)] ={\displaystyle\sum _{i ,j =0...n}}\xi _{i ,j}p_{k -1}(S^{d +k -2}U)$.\end{center}\par
Hence, by Proposition \ref{propinter}, we get:

\begin{center}$\ensuremath{\operatorname*{Im}}(\psi _{k ,d +k -1}) ={\displaystyle\bigcap _{i ,j =0...n;r ,s =0...n}}\ker (D_{i ,j} \circ D_{r ,s})$. 
\end{center}\par
\end{proof}

\section{Rational varieties as projections of Veronese embeddings and cohomology of the restricted tangent bundle  
}\label{Section restricted tg}
\label{Section tangent bundle}
Given any $\mathbb{C}$-vector space $W$, let us denote by $\mathbb{P}(W)$ the projective space of $1$-dimensional subspaces of $W .$ If $E \subset W$ is a $(e +1)$-dimensional subspace of $W ,$ let us denote by $\mathbb{P}(E)$ the corresponding projective subspace; if $w \in W$ is a non zero vector, $[w]$ will be the associated point in $\mathbb{P}(W) .$ 

As in section 2, let $U \simeq \mathbb{C}^{n +1}$ be a $(n +1)$-dimensional vector space and $\mathbb{P}^{n} =\mathbb{P}(U)$ the associated projective space. Let us assume $d \geq 2$ and let $S^{d}$$U$  be the $d$-symmetric product of $U$ and let $\nu _{d} :\mathbb{P}^{n} \rightarrow \mathbb{P}(S^{d}U) =\mathbb{P}^{N}$ be the usual $d$-embedding of Veronese; of course $N =\binom{n +d}{d} -1.$

Let us consider a map $f :\mathbb{P}(U) \rightarrow \mathbb{P}^{s}$, defined by $s+1$ homogeneous polynomials of degree $d$. Denoting $X=f(\mathbb{P}(U))$, then $X$ is always the projection in $\mathbb{P}^{s}$ of $\nu _{d}(\mathbb{P}^{n}) \subset \mathbb{P}^{N}$ by a suitable projective subspace $\mathbb{P}(T)$ such that $\dim (T) = e +1 = N - s$, where $\mathbb{P}^{s} : =\mathbb{P}(V)$, $n \geq 1$. We have that $f$ is in fact defined by an injection

\begin{center}$f^{ \ast } :H^{0}(\mathbb{P}^{s} ,\mathcal{O}_{\mathbb{P}^{s}}(1)) =V^{ \vee } \hookrightarrow H^{0}(\mathbb{P}^{n} ,\mathcal{O}_{\mathbb{P}^{n}}(d)) =S^{d}U^{ \vee }$\end{center}\par

\noindent such that the polynomials in  $f^{ \ast }(V^{ \vee })$ (a basis of which gives the polynomials defining $f$) have no common roots. We have $T =f^{ \ast }(V^{ \vee })^{ \bot } \subset S^{d}U .$ Then one sees that $f^{ \ast }$ can be identified with the dual of the natural projection $S^{d}U \rightarrow S^{d}U/T \simeq V$, hence $\mathbb{P}(V) \simeq \mathbb{P}($$S^{d}U/T)$ and $f$ can be identified with $\pi _{T} \circ \nu _{d}$ where $\pi _{T} :\mathbb{P}^{N} \rightarrow \mathbb{P}^{s}$ is the projection with vertex $\mathbb{P}(T) .$ Note that $\dim (T^{ \vee }) =s +1 =N - e $.  Moreover the natural inclusion $(S^{d}U/T)^{ \vee } \subset S^{d}U^{ \vee }$ identifies $(S^{d}U/T)^{ \vee }$ with $T^{\perp }$, hence $S^{d}U/T \simeq (T^{\perp })^{ \vee } .$

We have the following fundamental exact sequence:

\begin{equation}\label{eq:exactTN}
0 \rightarrow \mathcal{T}_{\mathbb{P}^{n}} \rightarrow f^{ \ast }\mathcal{T}_{\mathbb{P}^{s}} \rightarrow \mathcal{N}_{f} \rightarrow 0
\end{equation}
 where  $f^{ \ast }\mathcal{T}_{\mathbb{P}^{s}}$ is the pull back of the tangent bundle of $\mathbb{P}^{s}$, the first map is the differential of $f$ and $\mathcal{N}_{f }$ is its cokernel. Similarly, the pull back $f^{ \ast }\mathcal{T}_{\mathbb{P}^{s}}$ of the tangent bundle of $\mathbb{P}^{s}$ will be often denoted by $\mathcal{T}_{f}$.

\medskip 

\begin{remark} The fact that $f:\PP^n=\PP(U)\to \PP^s=\PP(S^dU/T)$ is everywhere defined, and the identification $f=\pi_T\circ\nu_d$ discussed above, amounts to the condition
$$\mathbb{P}(T) \cap \nu _{d}(\mathbb{P}(U) = \varnothing.$$
\end{remark}

\medskip 

\begin{remark}
If moreover we assume that $f$ is an embedding, identifying $\PP^n$ with its image $X$, which is a smooth rational, projective, $n$-dimensional variety embedded in $\mathbb{P}^{s}$, then $$\mathbb{P}(T) \cap Sec(\nu _{d}(\mathbb{P}(U)) = \varnothing.$$
Note that in particular one must have $s \geq 2n +1$.

\medskip 

 Let $\mathcal{I}_{X}$ be the ideal sheaf of $X$ in $\mathbb{P}^{s}$. The normal bundle of $X$ in $\mathbb{P}^{s}$ is defined as $\mathcal{N}_{X} : =Hom(\mathcal{I}_{X}/\mathcal{I}_{X}^{2} ,$$\mathcal{O}_{X})$.  When $f$ is an embedding, $\T_f$ can be identified with the restriction of $\mathcal{T}_{\mathbb{P}^{s}}$ to $X $. Moreover $f$ identifies the pull back $f^{ \ast }\mathcal{N}_{X}$ of the normal bundle with $\mathcal{N}_{X}$ and we also have $f^{ \ast }\mathcal{N}_{X} = \mathcal{N}_{f}$. Hence, in our assunptions, the sequence (\ref{eq:exactTN}) becomes:

\begin{equation}\label{eq:exactTNembedding}
0 \rightarrow \mathcal{T}_{\mathbb{P}^{n}} \rightarrow \mathcal{T}_{f} \rightarrow \mathcal{N}_{X} \rightarrow 0.
\end{equation}
\end{remark}

Now let us go back to the general case, where $f$ is just a map.
Taking into account the previous notation and considering the Euler sequence for $\mathbb{P}^{n}$ and the pull back of the Euler sequence for $\mathbb{P}^{s}$ we get the following commutative diagram:

\begin{equation}\label{eq:diagramnormal}
\begin{array}{ccccccccc}\, & \, & 0 & \, & 0 & \, & \, & \, & \, \\
\, & \, & \downarrow  & \, & \downarrow  & \, & \, & \, & \, \\
0 &  \rightarrow  & \mathcal{O}_{\mathbb{P}^{n}}\mathbb{}^{} &  \rightarrow  & \mathcal{O}_{\mathbb{P}^{n}}\mathbb{}^{} &  \rightarrow  & 0 & \, & \, \\
\, & \, & \downarrow  & \, & \downarrow  & \, & \downarrow  & \, & \, \\
0 &  \rightarrow  & U \otimes \mathcal{O}_{\mathbb{P}^{n}}(1) &  \rightarrow  & (T^{\perp })^{ \vee } \otimes _{}\mathcal{O}_{\mathbb{P}^{n}}(d) &  \rightarrow  & \mathcal{N}_{f} &  \rightarrow  & 0 \\
\, & \, & \downarrow  & \, & \downarrow  & \, & \downarrow  & \, & \, \\
0 &  \rightarrow  & \mathcal{T}_{\mathbb{P}^{n}} &  \rightarrow  & \mathcal{T}_{f} &  \rightarrow  & \mathcal{N}_{f} &  \rightarrow  & 0 \\
\, & \, & \downarrow  & \, & \downarrow  & \, & \downarrow  & \, & \, \\
\, & \, & 0 & \, & 0 & \, & 0 & \, & \end{array}
\end{equation}
Let us consider the central vertical exact sequence, twisted by $\mathcal{O}_{\mathbb{P}^{n}}( -k)$ where $k$ is any integer:

\begin{equation}\label{eq:tgsheaves} 0 \rightarrow \mathcal{O}_{\mathbb{P}^{n}}( -k) \rightarrow (T^{\perp })^{ \vee } \otimes _{}\mathcal{O}_{\mathbb{P}^{n}}(d -k) \rightarrow \mathcal{T}_{f}( -k) \rightarrow 0. \end{equation}
The one has the following immediate result.
\begin{proposition}
The dimensions of the cohomology vector spaces $H^{i}\mathcal{T}_{f}( -k)$  are uniquely determined by the ranks and the degrees of the sheaves in the exact sequence (\ref{eq:tgsheaves}), with the only exceptions for $i \in \{n -1 ,n\}$ and $k \geq d +n +1.$
\end{proposition}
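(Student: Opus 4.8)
The plan is to run the long exact cohomology sequence of (\ref{eq:tgsheaves}) against Bott's formula for line bundles on $\PP^n$. Recall that the cohomology of $\OO_{\PP^n}(m)$ is concentrated in degrees $0$ and $n$, that $H^0(\OO_{\PP^n}(m))=0$ for $m<0$ while $H^n(\OO_{\PP^n}(m))=0$ for $m>-n-1$, and that in all cases $\dim H^i(\OO_{\PP^n}(m))$ depends only on $n$ and $m$. In (\ref{eq:tgsheaves}) the two outer sheaves are $\OO_{\PP^n}(-k)$ and the direct sum of $s+1$ copies of $\OO_{\PP^n}(d-k)$, so their cohomology dimensions are functions of $n,s,d,k$ alone, i.e. of the ranks and degrees appearing in (\ref{eq:tgsheaves}) and not of the subspace $T$; moreover the first arrow of (\ref{eq:tgsheaves}) is the inclusion of a sub-line-bundle, hence injective on $H^0$, so the map it induces on $H^0$ has rank $h^0\OO_{\PP^n}(-k)$, again determined by $n,k$. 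Since $\chi(\T_f(-k))$ is likewise determined by Riemann--Roch, an $h^i\T_f(-k)$ can fail to be determined only through a connecting homomorphism between cohomology groups of the two outer sheaves whose rank is not forced, so the proof amounts to locating, in the long exact sequence, exactly where such a map can occur.

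First I would dispose of the middle range $0<i<n-1$ (nonempty only for $n\ge 3$): there $H^i$ and $H^{i+1}$ of both outer sheaves vanish by Bott, whence $H^i\T_f(-k)=0$. For $i=0$, the long exact sequence reads $0\to H^0\OO_{\PP^n}(-k)\to H^0((T^\perp)^\vee\otimes\OO_{\PP^n}(d-k))\to H^0\T_f(-k)\to H^1\OO_{\PP^n}(-k)\to H^1((T^\perp)^\vee\otimes\OO_{\PP^n}(d-k))$; if $n\ge 2$ the fourth term vanishes and $h^0\T_f(-k)=(s+1)\,h^0\OO_{\PP^n}(d-k)-h^0\OO_{\PP^n}(-k)$, while if $n=1$ the fifth term vanishes exactly for $k\le d+1=d+n$ and in that range one reads off $h^0\T_f(-k)$ and $h^1\T_f(-k)=0$ from the same sequence. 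In all these cases the answer depends only on $n,s,d,k$.

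Finally, for $n\ge 2$, I would handle $i\in\{n-1,n\}$: since $H^{n-1}((T^\perp)^\vee\otimes\OO_{\PP^n}(d-k))=0$, the tail of the long exact sequence is $0\to H^{n-1}\T_f(-k)\to H^n\OO_{\PP^n}(-k)\xrightarrow{\,\varphi\,}H^n((T^\perp)^\vee\otimes\OO_{\PP^n}(d-k))\to H^n\T_f(-k)\to 0$, where $\varphi$ is induced by the first arrow of (\ref{eq:tgsheaves}); thus $h^{n-1}\T_f(-k)=\dim\ker\varphi$ and $h^n\T_f(-k)=\dim\operatorname{coker}\varphi$. By Bott, the target of $\varphi$ vanishes exactly for $d-k\ge -n$, i.e. $k\le d+n$, and then $h^{n-1}\T_f(-k)=h^n\OO_{\PP^n}(-k)$ and $h^n\T_f(-k)=0$, both determined by $n,k$. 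Together with the previous step this covers every $(i,k)$ except $i\in\{n-1,n\}$ with $k\ge d+n+1$, which is precisely the asserted exceptional set.

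I do not expect a genuine obstacle: this is bookkeeping with the long exact sequence and Bott vanishing, the only delicate point being to keep the cases $n=1$ and $n\ge 2$ apart, since the potentially non-universal connecting map sits between the $H^0$'s and the $H^1$'s when $n=1$ but between the $H^n$'s when $n\ge 2$. If one wishes to see that these exceptions are real, Serre duality identifies $\varphi$ with the transpose of the multiplication map $T^\perp\otimes S^{k-d-n-1}U^\vee\to S^{k-n-1}U^\vee$, whose image is the degree $k-n-1$ graded piece of the ideal generated by the $s+1$ forms defining $f$; its rank, hence $h^{n-1}\T_f(-k)$ and $h^n\T_f(-k)$, genuinely depends on $f$ in the range $k\ge d+n+1$.
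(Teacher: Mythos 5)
Your proof is correct and is exactly the argument the paper has in mind: the paper declares the result ``immediate'' from the long exact cohomology sequence of (\ref{eq:tgsheaves}) together with Bott vanishing, and then isolates the same four-term tail $0\to H^{n-1}\T_f(-k)\to H^n\OO_{\PP^n}(-k)\to (T^\perp)^\vee\otimes H^n\OO_{\PP^n}(d-k)\to H^n\T_f(-k)\to 0$ for the exceptional range. Your closing identification of $\varphi$ with the dual of the multiplication map $T^\perp\otimes S^{k-d-n-1}U^\vee\to S^{k-n-1}U^\vee$ is precisely how the paper proceeds in Theorem \ref{teotg}.
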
 
In the cases not covered by the previous proposition we can consider the following exact sequence:

$$0 \rightarrow H^{n -1}\mathcal{T}_{f}( -k) \rightarrow H^{n}\mathcal{O}_{\mathbb{P}^{n}}( -k) \rightarrow H^{n}\mathcal{O}_{\mathbb{P}^{n}}(d -k) \otimes (T^{\perp })^{ \vee } \rightarrow H^{n}\mathcal{T}_{f}( -k) \rightarrow 0.$$

Of course, it suffices to calculate $h^{n -1}(\mathcal{T}_{f}( -k))$ toobtain also $h^{n}(\mathcal{T}_{f}( -k))$. We have the following

\begin{theorem}
\label{teotg} Let $f :\mathbb{P}^n \rightarrow \mathbb{P}^{s}$ be a map and let $\mathbb{P}(T)\subset\PP^n=\PP(U)$ be a suitable projective subspace such that, if $\nu_d\colon \PP(U)\to \mathbb{P}(S^dU)=\PP^N$ is the $d$-Veronese embedding and $\pi_T:\PP(S^dU)\dashrightarrow \PP^s=\PP(S^dU/T)$ is the projection with vertex $\PP(T)$, then $f=\pi_T\circ \nu_d$. 

Let us set $\dim (T) =e +1 =N -s$ and let us denote $\mathcal{T}_{f}=f^{ \ast }\mathcal{T}_{\mathbb{P}^{s}}$, the pull-back of the tangent bundle of $\mathbb{P}^{s}$. For any integer $k\geq d+n+1$, let us set $\chi  : =k -d -n -1 \geq 0.$ 
Then we have
$$h^{n -1}(\mathcal{T}_{f}( -k)) =\dim ( \partial ^{ -\chi }T) .$$
\end{theorem}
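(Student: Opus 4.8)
The plan is to extract $h^{n-1}(\mathcal{T}_f(-k))$ from the long exact sequence in cohomology associated to the twisted Euler-type sequence (\ref{eq:tgsheaves}). Since $k \geq d+n+1$, both $H^n\mathcal{O}_{\PP^n}(-k)$ and $H^n\mathcal{O}_{\PP^n}(d-k)\otimes(T^\perp)^\vee$ are nonzero while all lower and all other higher cohomology of the line bundles vanishes, so the only interesting piece is
$$0 \rightarrow H^{n-1}\mathcal{T}_f(-k) \rightarrow H^n\mathcal{O}_{\PP^n}(-k) \xrightarrow{\ \alpha\ } H^n\mathcal{O}_{\PP^n}(d-k)\otimes(T^\perp)^\vee \rightarrow H^n\mathcal{T}_f(-k)\rightarrow 0.$$
Thus $h^{n-1}(\mathcal{T}_f(-k)) = \dim\ker\alpha$, and the whole problem reduces to identifying the map $\alpha$ and computing the dimension of its kernel.

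Next I would make the map $\alpha$ explicit using Serre duality. We have canonical isomorphisms $H^n\mathcal{O}_{\PP^n}(-k) \simeq (S^{k-n-1}U)^\vee$ and $H^n\mathcal{O}_{\PP^n}(d-k)\simeq (S^{k-d-n-1}U)^\vee = (S^\chi U)^\vee$ (using $H^n\mathcal{O}_{\PP^n}(-m)\simeq H^0(\mathcal{O}_{\PP^n}(m-n-1))^\vee$ together with the chosen perfect pairings on symmetric powers). Under these identifications the map in (\ref{eq:tgsheaves}) coming from $\mathcal{O}_{\PP^n}(-k)\to (T^\perp)^\vee\otimes\mathcal{O}_{\PP^n}(d-k)$ is, up to the standard Euler-sequence description, induced by the inclusion $(T^\perp)^\vee = (S^dU/T)^\vee \hookrightarrow S^dU^\vee$, i.e. by multiplication by the generators of $T^\perp\subset S^dU^\vee$ acting as differential operators. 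Dualizing, $\alpha^\vee \colon (T^\perp)^\vee \otimes S^\chi U \to S^{k-n-1}U$ is the multiplication/comultiplication pairing $T^\perp$-part times $S^\chi U \to S^{d+\chi}U = S^{k-n-1}U$, so that the image of $\alpha^\vee$ is precisely $T^\perp \cdot S^\chi U$ inside $S^{k-n-1}U^\vee$ — or rather, transposing once more, $\ker\alpha$ is the annihilator of that image. The clean way to say it: $\dim\ker\alpha = \dim H^n\mathcal{O}_{\PP^n}(-k) - \dim\operatorname{Im}\alpha$, and $\operatorname{Im}\alpha$ is dual to $S^{k-d-n-1}U^\vee / (\text{something})$; carefully chasing the pairings, $\ker\alpha \simeq \big(S^{k-n-1}U / (T\cdot S^\chi U)^{?}\big)$ — and here is where the operator $\partial^{-\chi}T$ enters.

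The key identification to nail down is that $\partial^{-\chi}T$, defined in Section \ref{sec:background} as $\bigcap_{\omega\in U^\vee}(\partial^{-\chi})(T)$ (iterated preimage under the derivation operators), is exactly the annihilator inside $S^{d+\chi}U$ of the subspace $T^\perp\cdot S^\chi U^\vee \subseteq S^{d+\chi}U^\vee$, equivalently $\partial^\chi$ maps it into $T$. Indeed $q\in\partial^{-\chi}T$ iff $\partial^I q\in T$ for all $|I|=\chi$, iff $\langle \partial^I q, \omega\rangle = 0$ for all $|I|=\chi$ and all $\omega\in T^\perp$, iff $q$ pairs to zero with $\omega\cdot S^\chi U^\vee$ for all $\omega\in T^\perp$. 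So $\partial^{-\chi}T = (T^\perp\cdot S^\chi U^\vee)^\perp$ in $S^{d+\chi}U$, and hence $\dim\partial^{-\chi}T = \dim S^{d+\chi}U - \dim(T^\perp\cdot S^\chi U^\vee)$. Matching this against $\dim\ker\alpha$ after translating through Serre duality (noting $d+\chi = k-n-1$) will give the theorem; the main obstacle, and the step I expect to require genuine care rather than routine bookkeeping, is getting the Euler-sequence connecting maps and the multiple layers of duality to line up without a sign or transpose error, so that $\operatorname{Im}\alpha$ is correctly identified with $T^\perp\cdot S^\chi U^\vee$ (as a subspace of $S^{k-n-1}U^\vee \simeq H^n\mathcal{O}_{\PP^n}(d-k)\otimes(T^\perp)^\vee$-related space) and not with some other combinatorially similar but inequivalent space. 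Once that is pinned down, the count $h^{n-1}(\mathcal{T}_f(-k)) = \dim S^{k-n-1}U - \dim(T^\perp\cdot S^\chi U^\vee) = \dim\partial^{-\chi}T$ follows immediately.
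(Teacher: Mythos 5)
Your proposal is correct and follows essentially the same route as the paper: extract $h^{n-1}(\mathcal{T}_f(-k))$ as the kernel of the connecting map in the long exact sequence of (\ref{eq:tgsheaves}), identify that map via Serre duality as the dual of the multiplication $\varphi\colon T^\perp\otimes S^{\chi}U^\vee\to S^{k-n-1}U^\vee$, and conclude via the identity $(\partial^{-\chi}T)^\perp=T^\perp\cdot S^{\chi}U^\vee$, which is exactly the paper's Lemma \ref{lemJPAA} and which you re-derive correctly. The only blemishes are the transposition slips you yourself flag (e.g.\ writing $\alpha^\vee$ on $(T^\perp)^\vee\otimes S^\chi U$ rather than $T^\perp\otimes S^\chi U^\vee$), which do not affect the final dimension count.
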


Before giving the proof of Theorem \ref{teotg}, we need

\begin{lemma}
$( \partial ^{ -t}T)^{ \bot } =T^{ \bot } \cdot S^{t}U^{ \vee }$ for any integer $t \geq 0.$\label{lemJPAA}

\begin{proof}
 For $t =0$ the equality is obvious. Let us assume that $t =1.$ Let us consider any two elements $\sigma  \in T^{\perp } \subset S^{d}U^{ \vee }$ and $\omega  =\alpha _{0} \partial _{0} + . . . +\alpha _{n} \partial _{n} \in U^{ \vee };$ we have that $\sigma \omega  \in ( \partial ^{ -1}T)^{ \bot }$, in fact, for any $q \in  \partial ^{ -1}T \subset S^{d +1}U$ we have: $ \langle q ,\sigma \omega  \rangle  =\sigma [\omega (q)] =0$ because $\omega (q) \in T$ , by definition of $ \partial ^{ -t}T$ and $\sigma  \in T^{\perp };$ therefore $T^{ \bot } \cdot U^{ \vee }$ $ \subseteq ( \partial ^{ -1}T)^{ \bot }$. Vice versa, let $q$ be any element of $[T^{ \bot } \cdot U^{ \vee }]^{\perp }$ $ \subseteq S^{d +1}U ,$ then, for any $\sigma  \in T^{\perp }$ and for any $i =0 , . . . ,n ,$ we have $ \langle q ,\sigma  \partial _{i} \rangle  =0 ,$ hence $\sigma [ \partial _{i}(q)] = \langle  \partial _{i}(q) ,\sigma  \rangle  =0 ,$ for any $i =0 , . . . ,n;$ this implies that $ \partial _{i}(q) \in T$ for any $i =0 , . . . ,n ,$ hence $q \in  \partial ^{ -1}T$ and therefore $[T^{ \bot } \cdot U^{ \vee }]^{\perp }$ $ \subseteq  \partial ^{ -1}T .$ This implies $T^{ \bot } \cdot U^{ \vee } \supseteq ( \partial ^{ -1}T)^{ \bot }$ and, as the other inclusion was proved before, we get $( \partial ^{ -1}T)^{ \bot } =T^{ \bot } \cdot U^{ \vee }$.

When $t \geq 2$ we can use recursion on $t$, starting from the case $t =1 ,$ since $T^{ \bot } \cdot S^{t +1}U^{ \vee } =(T^{ \bot } \cdot S^{t}U^{ \vee }) \cdot U^{ \vee }$ and $ \partial ^{ -t -1}T = \partial ^{ -1}( \partial ^{ -t}T) .$
\end{proof}

\end{lemma}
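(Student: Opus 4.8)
The plan is to prove the identity by reducing to the case $t=1$ and then bootstrapping by induction, the whole argument resting on a single adjunction: with respect to the perfect pairing $S^{m+1}U\times S^{m+1}U^{\vee}\to\mathbb{C}$, multiplication by an element $\omega\in U^{\vee}$ on the dual side is the transpose of differentiation $\partial_{\omega}$ on $S^{\bullet}U$. Concretely, for $q\in S^{m+1}U$, $\sigma\in S^{m}U^{\vee}$ and $\omega\in U^{\vee}$ one has $\langle q,\sigma\omega\rangle=\langle\omega(q),\sigma\rangle$. Both sides of the asserted equality are subspaces of $S^{d+t}U^{\vee}$, so it suffices to match them there. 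The case $t=0$ is trivial, so the content is the base case $t=1$ together with the inductive step.

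For $t=1$ I would argue by a double inclusion, the forward one being formal and the reverse one being the real point. Packaging the $n+1$ partial derivatives into one map $\Phi\colon S^{d+1}U\to (S^{d}U)^{\oplus(n+1)}$, $q\mapsto(\partial_{0}q,\dots,\partial_{n}q)$, we have $\partial^{-1}T=\Phi^{-1}\big(T^{\oplus(n+1)}\big)$, while the adjunction identifies the transpose $\Phi^{t}$ with $(\sigma_{0},\dots,\sigma_{n})\mapsto\sum_{i}\sigma_{i}\partial_{i}$, so that $\Phi^{t}\big((T^{\bot})^{\oplus(n+1)}\big)=T^{\bot}\cdot U^{\vee}$. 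The claim for $t=1$ is then exactly the general linear-algebra identity $(\Phi^{-1}W)^{\bot}=\Phi^{t}(W^{\bot})$ applied to $W=T^{\oplus(n+1)}$ (the nontrivial inclusion of which follows from the formal one by a dimension count). If one prefers to avoid invoking that identity as a black box, the inclusion $T^{\bot}\cdot U^{\vee}\subseteq(\partial^{-1}T)^{\bot}$ follows immediately from $\langle q,\sigma\partial_{i}\rangle=\sigma[\partial_{i}(q)]=0$ for $q\in\partial^{-1}T$ and $\sigma\in T^{\bot}$; for the reverse inclusion I would instead show $[T^{\bot}\cdot U^{\vee}]^{\bot}\subseteq\partial^{-1}T$ — an element $q$ annihilating every $\sigma\partial_{i}$ satisfies $\sigma[\partial_{i}(q)]=0$ for all $i$ and all $\sigma\in T^{\bot}$, forcing $\partial_{i}(q)\in(T^{\bot})^{\bot}=T$, i.e. $q\in\partial^{-1}T$ — and then take annihilators of both sides.

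For $t\geq 2$ I would induct, using the two compatibilities $\partial^{-t-1}T=\partial^{-1}(\partial^{-t}T)$ and $T^{\bot}\cdot S^{t+1}U^{\vee}=(T^{\bot}\cdot S^{t}U^{\vee})\cdot U^{\vee}$. Applying the base case to the subspace $\partial^{-t}T\subseteq S^{d+t}U$ gives $(\partial^{-t-1}T)^{\bot}=(\partial^{-t}T)^{\bot}\cdot U^{\vee}$, and substituting the inductive hypothesis $(\partial^{-t}T)^{\bot}=T^{\bot}\cdot S^{t}U^{\vee}$ yields the desired equality in degree $t+1$. It is worth noting that the base-case argument is valid for a subspace of any symmetric power, not just of $S^{d}U$, which is precisely what licenses its use with $T$ replaced by $\partial^{-t}T$.

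The main obstacle is the reverse inclusion in the base case. Both the formal direction and the induction are bookkeeping, but closing the base case genuinely requires the nondegeneracy of the pairing twice: once to turn "$\sigma[\partial_{i}(q)]=0$ for all $\sigma\in T^{\bot}$" into "$\partial_{i}(q)\in T$" via $(T^{\bot})^{\bot}=T$, and once more to convert the inclusion $[T^{\bot}\cdot U^{\vee}]^{\bot}\subseteq\partial^{-1}T$ into $(\partial^{-1}T)^{\bot}\subseteq T^{\bot}\cdot U^{\vee}$ by taking annihilators. One should also read $T^{\bot}\cdot U^{\vee}$ as the full linear span of all products $\sigma\omega$ (equivalently, the image of $\Phi^{t}$), so that no elements are missed when flipping inclusions.
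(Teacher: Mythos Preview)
Your argument is correct and follows essentially the same route as the paper: the base case $t=1$ is handled by the adjunction $\langle q,\sigma\omega\rangle=\langle\omega(q),\sigma\rangle$, establishing $T^{\bot}\cdot U^{\vee}\subseteq(\partial^{-1}T)^{\bot}$ directly and the reverse inclusion via $[T^{\bot}\cdot U^{\vee}]^{\bot}\subseteq\partial^{-1}T$ followed by taking annihilators, and the general case is obtained by the same induction using $\partial^{-t-1}T=\partial^{-1}(\partial^{-t}T)$ and $T^{\bot}\cdot S^{t+1}U^{\vee}=(T^{\bot}\cdot S^{t}U^{\vee})\cdot U^{\vee}$. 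Your packaging via the map $\Phi$ and the identity $(\Phi^{-1}W)^{\bot}=\Phi^{t}(W^{\bot})$ is a pleasant conceptual rephrasing, but it is not a different argument: unwinding it recovers exactly the paper's double inclusion.
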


Now we can prove Theorem \ref{teotg}.

\begin{proof}
 (of Theorem \ref{teotg}) Note that the map

\begin{center}$H^{n}\mathcal{O}_{\mathbb{P}^{n}}( -k) \rightarrow H^{n}\mathcal{O}_{\mathbb{P}^{n}}(d -k) \otimes (T^{\perp })^{ \vee }$\end{center}\par
\noindent in the above sequence is the dual of the linear map

$$\varphi  :T^{\perp } \otimes H^{0}\mathcal{O}_{\mathbb{P}^{n}}(k -d -n -1) \rightarrow H^{0}\mathcal{O}_{\mathbb{P}^{n}}(k -n -1),$$
which is the same as $\varphi  :T^{\perp } \otimes S^{k -d -n -1}U^{ \vee } \rightarrow S^{k -n -1}U^{ \vee }$, which is a multiplication map, so that $\ensuremath{\operatorname*{Im}}(\varphi ) =T^{\perp } \cdot S^{k -d -n -1}U^{ \vee }$ and $h^{n -1}(\mathcal{T}_{f}( -k)) =\dim (S^{k -n -1}U^{ \vee }) -\dim (\ensuremath{\operatorname*{Im}}\varphi ) .$

By Lemma \ref{lemJPAA} we have:

$\dim (S^{k -n -1}U^{ \vee }) -\dim (\ensuremath{\operatorname*{Im}}\varphi ) =\dim (S^{k -n -1}U^{ \vee }) -\dim (( \partial ^{ -\chi }T)^{ \bot }) =\dim ( \partial ^{ -\chi }T)^{}$.
\end{proof}

\begin{corollary}
In the previous notation: if $\dim ( \partial ^{ -\chi }T) =0 ,$ then $h^{n -1}(\mathcal{T}_{f}( -k)) =0.$
\end{corollary}

\section{Cohomology of the normal bundle 
}
\label{Section normal bundle}
Now we want to consider the cohomology of the sheaf $\N_f$ defined as in (\ref{eq:exactTN}).
Recall, by Remark 2 of the previous section, that when $f$ is an embedding and in particular $X$ is smooth,  this is also the cohomology of the normal bundle $\N_X$ of $X$.

Let us consider the middle horizontal exact sequence in the diagram \ref{eq:diagramnormal}, twisted by $\mathcal{O}_{\mathbb{P}^{n}}( -k) ,$ where $k$ is any integer:

\begin{equation}\label{eq:normalsheaves}
0 \rightarrow U \otimes \mathcal{O}_{\mathbb{P}^{n}}(1 -k) \rightarrow (T^{\perp })^{ \vee } \otimes _{}\mathcal{O}_{\mathbb{P}^{n}}(d -k) \rightarrow \mathcal{N}_{f}( -k) \rightarrow 0.
\end{equation}
Similarly as in the previous section, where we studied the cohomology of the restricted tangent bundle, we have the following immediate result.
\begin{proposition}\label{prop:cohomfacile} The dimensions of the cohomology vector spaces $H^{i}\mathcal{N}_{f}( -k)$ are uniquely determined by the ranks and degrees of the sheaves appearing in the sequence (\ref{eq:normalsheaves}), with the only exceptions for $i \in \{n -1 ,n\}$ and $k \geq d +n +1.$  
\end{proposition}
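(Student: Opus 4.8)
The plan is to take the long exact sequence in cohomology of (\ref{eq:normalsheaves}) and to exploit the vanishing of the intermediate cohomology of direct sums of line bundles on $\PP^n$. Write $\mathcal{A}:=U\otimes\OO_{\PP^n}(1-k)$, $\mathcal{B}:=(T^{\perp})^{\vee}\otimes\OO_{\PP^n}(d-k)$ and $\mathcal{C}:=\N_f(-k)$, so that (\ref{eq:normalsheaves}) is $0\to\mathcal{A}\to\mathcal{B}\to\mathcal{C}\to 0$. For a single line bundle one has $H^{i}(\OO_{\PP^n}(\ell))=0$ for $0<i<n$, while $h^{0}$ (nonzero exactly for $\ell\geq 0$) and $h^{n}$ (nonzero exactly for $\ell\leq -n-1$) are explicit binomial coefficients in $\ell$ and $n$. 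Tensoring by the fixed vector spaces $U$ and $(T^{\perp})^{\vee}$ multiplies these dimensions by $n+1=\rk\mathcal{A}$ and $s+1=\rk\mathcal{B}$, so every $h^{i}(\mathcal{A})$ and $h^{i}(\mathcal{B})$ is a function of the ranks and of the twists $1-k$, $d-k$ only, i.e.\ of the ranks and degrees of the sheaves appearing in (\ref{eq:normalsheaves}).

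Suppose first $n\geq 2$. As $H^{i}(\mathcal{A})=H^{i}(\mathcal{B})=0$ for $1\leq i\leq n-1$, the long exact sequence splits into the piece $0\to H^{0}\mathcal{A}\to H^{0}\mathcal{B}\to H^{0}\mathcal{C}\to 0$ (using $H^{1}\mathcal{A}=0$), whence $h^{0}\mathcal{C}=h^{0}\mathcal{B}-h^{0}\mathcal{A}$; the isomorphisms forcing $H^{i}\mathcal{C}=0$ for $1\leq i\leq n-2$; and the tail
\[
0\to H^{n-1}\mathcal{C}\to H^{n}\mathcal{A}\stackrel{\mu}{\longrightarrow}H^{n}\mathcal{B}\to H^{n}\mathcal{C}\to 0 .
\]
Hence for $i\notin\{n-1,n\}$ the dimension $h^{i}(\N_f(-k))$ is read off from the outer terms, while $h^{n-1}\mathcal{C}=h^{n}\mathcal{A}-\rk(\mu)$ and $h^{n}\mathcal{C}=h^{n}\mathcal{B}-\rk(\mu)$. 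These last two are determined by the ranks and degrees as soon as $\mu$ is forced to vanish, and this fails only when $H^{n}\mathcal{A}\neq 0$ and $H^{n}\mathcal{B}\neq 0$ simultaneously, i.e.\ $1-k\leq -n-1$ and $d-k\leq -n-1$, i.e.\ $k\geq n+2$ and $k\geq d+n+1$. Since $d\geq 2$ the condition $k\geq d+n+1$ is the binding one, so for $k\leq d+n$ one of the two groups vanishes, $\mu=0$, and $h^{n-1}\mathcal{C}$, $h^{n}\mathcal{C}$ are again determined; only for $k\geq d+n+1$ can $\rk(\mu)$ vary with $f$, which is precisely the exceptional range asserted. (The actual value of $\rk(\mu)$ in terms of $T$ is the content of Theorem \ref{teo normale}.)

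For $n=1$ the long exact sequence reads $0\to H^{0}\mathcal{A}\to H^{0}\mathcal{B}\to H^{0}\mathcal{C}\to H^{1}\mathcal{A}\to H^{1}\mathcal{B}\to H^{1}\mathcal{C}\to 0$, and the same reasoning applies: $H^{1}\mathcal{B}=0$ unless $d-k\leq -2$, i.e.\ $k\geq d+2=d+n+1$, so for $k\leq d+1$ the connecting map $H^{0}\mathcal{C}\to H^{1}\mathcal{A}$ is surjective, $H^{1}\mathcal{C}=0$, and an Euler characteristic count gives $h^{0}\mathcal{C}=h^{0}\mathcal{B}-h^{0}\mathcal{A}+h^{1}\mathcal{A}$; all of these depend only on ranks and degrees. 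Only for $k\geq d+n+1$ does the rank of $H^{1}\mathcal{A}\to H^{1}\mathcal{B}$ enter, affecting exactly $i\in\{0,1\}=\{n-1,n\}$.

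The whole argument is a routine diagram chase; the one point that needs attention is the precise location of the exceptional range. One must check that in the window $n+2\leq k\leq d+n$ (where $H^{n}\mathcal{A}\neq 0$ but $H^{n}\mathcal{B}=0$) the relevant map is forced to vanish by the vanishing of its target, so those cases are \emph{not} exceptional, and that the hypothesis $d\geq 2$ guarantees $d+n+1>n+2$, so that the two boundary inequalities do not overlap. The genuinely non-formal part — the explicit computation of $\rk(\mu)$, equivalently of $h^{n-1}(\N_f(-k))$, in terms of the subspace $T$ — is deliberately left to Theorem \ref{teo normale}.
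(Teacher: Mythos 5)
Your proof is correct and is exactly the argument the paper has in mind (the paper calls the result ``immediate'' and omits the proof): the long exact sequence of (\ref{eq:normalsheaves}) together with the vanishing of intermediate cohomology of line bundles on $\PP^n$, ending in the four-term sequence $0\to H^{n-1}\N_f(-k)\to U\otimes H^n\OO_{\PP^n}(1-k)\to (T^{\perp})^{\vee}\otimes H^n\OO_{\PP^n}(d-k)\to H^n\N_f(-k)\to 0$ that the paper displays immediately afterwards. Your localization of the exceptional range, including the observation that in the window $n+2\le k\le d+n$ the map $\mu$ vanishes because its target does, is accurate.
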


In the cases not covered by the previous proposition, we consider the following exact sequence:

$$0 \rightarrow H^{n -1}\mathcal{N}_{f}( -k) \rightarrow U \otimes H^{n}\mathcal{O}_{\mathbb{P}^{n}}(1 -k)) \rightarrow (T^{\perp })^{ \vee } \otimes H^{n}\mathcal{O}_{\mathbb{P}^{n}}(d -k) \rightarrow H^{n}\mathcal{N}_{f}( -k) \rightarrow 0.$$

Of course, it suffices to calculate $h^{n -1}(\mathcal{N}_{f}( -k))$ to obtain also $h^{n}(\mathcal{N}_{f}( -k))$. We have

\begin{theorem}
\label{teo normale}  Let $f :\mathbb{P}^n \rightarrow \mathbb{P}^{s}$ be a map and let $\mathbb{P}(T)\subset\PP^n=\PP(U)$ be a suitable projective subspace such that, if $\nu_d\colon \PP(U)\to \mathbb{P}(S^dU)=\PP^N$ is the $d$-Veronese embedding and $\pi_T:\PP(S^dU)\dashrightarrow \PP^s=\PP(S^dU/T)$ is the projection with vertex $\PP(T)$, then $f=\pi_T\circ \nu_d$.  

Let $\N_f$ be the normal sheaf to the map $f$, as defined as in (\ref{eq:exactTN}). For any integer $k \geq d-n-1$, let us set $\chi  : =k -d -n -1 \geq 0$ and let us consider the map

\begin{center}$\psi _{\chi  ,k -n -2} :U \otimes S^{k -n -2}U \rightarrow S^{\chi }U \otimes S^{d}U.$\end{center}\par

 If $\chi  \geq 1$, then

\begin{eqnarray*}
H^{n -1}(\mathcal{N}_{f}( -k)) &=&\operatorname{Im}(\psi _{\chi  ,k -n -2}) \cap (S^{\chi }U \otimes T)\\
&=&(S^{\chi}U \otimes T) \cap \left(\bigcap_{i ,j =0...n;r ,s =0...n}\ker (D_{i ,j} \circ D_{r ,s})\right).\end{eqnarray*}

If $\chi  =0$, then $h^{n -1}(\mathcal{N}_{f}( -k)) =dim(\mu ^{ -1}(T))$ where $\mu  :U \otimes S^{d -1}U \rightarrow S^{d}U$ is the multiplication map.
\end{theorem}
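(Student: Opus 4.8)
The plan is to compute $H^{n-1}(\mathcal{N}_f(-k))$ from the long exact cohomology sequence attached to (\ref{eq:normalsheaves}). Since $k \geq d+n+1$, all the relevant cohomology lives in top degree: $H^n\mathcal{O}_{\PP^n}(1-k)$ and $H^n\mathcal{O}_{\PP^n}(d-k)$ are nonzero, while the $H^{n-1}$ terms of the first two sheaves vanish, so $H^{n-1}(\mathcal{N}_f(-k))$ is exactly the kernel of the connecting-induced map
$$U\otimes H^n\mathcal{O}_{\PP^n}(1-k)\longrightarrow (T^\perp)^\vee\otimes H^n\mathcal{O}_{\PP^n}(d-k).$$
By Serre duality this map is dual to a multiplication-type map between spaces of polynomials in $U^\vee$. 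First I would identify this dual map carefully: $H^n\mathcal{O}_{\PP^n}(1-k)^\vee \cong S^{k-n-2}U^\vee$ and $H^n\mathcal{O}_{\PP^n}(d-k)^\vee\cong S^{k-d-n-1}U^\vee=S^\chi U^\vee$, while $(T^\perp)^\vee$ dualizes to $T^\perp\subset S^dU^\vee$; the map
$$T^\perp\otimes S^\chi U^\vee \longrightarrow U^\vee\otimes S^{k-n-2}U^\vee$$
should be the composite of multiplication $T^\perp\otimes S^\chi U^\vee\to S^{d+\chi}U^\vee = S^{k-n-1}U^\vee$ followed by (a multiple of) the polarization $p_1\colon S^{k-n-1}U^\vee\to U^\vee\otimes S^{k-n-2}U^\vee$. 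Then $H^{n-1}(\mathcal{N}_f(-k))$ is the annihilator, inside $U\otimes S^{k-n-2}U$, of the image of this dual map, i.e. $(\operatorname{Im})^\perp$; equivalently its dimension is $\dim(U\otimes S^{k-n-2}U)-\operatorname{rk}$ of that map.

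Next I would reinterpret this annihilator using the $\psi$ and $D_{i,j}$ machinery. The key observation is that $\psi_{\chi,k-n-2}\colon U\otimes S^{k-n-2}U\to S^\chi U\otimes S^d U$ should be (up to scalars) the \emph{transpose} of the dual map just described: polarizing then multiplying on one side, versus multiplying then polarizing, are adjoint operations under the perfect pairings on symmetric powers. Granting this, the annihilator of $\operatorname{Im}$(dual map) inside $U\otimes S^{k-n-2}U$ equals $\psi_{\chi,k-n-2}^{-1}$ of the annihilator of $T^\perp$ inside $S^\chi U\otimes S^d U$, which is $S^\chi U\otimes T$ (using $(T^\perp)^\perp = T$ and that the first factor $S^\chi U$ is paired with itself trivially cut out). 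Since $\psi_{\chi,k-n-2}$ restricted to its image is injective enough on the relevant piece — more precisely, because $\operatorname{Im}(\psi_{\chi,k-n-2})$ is a $GL(U)$-subrepresentation and one can transport the preimage to the image — I would conclude $H^{n-1}(\mathcal{N}_f(-k))\cong \operatorname{Im}(\psi_{\chi,k-n-2})\cap(S^\chi U\otimes T)$. The second equality in the theorem is then immediate from Proposition \ref{propIm-ker}, which identifies $\operatorname{Im}(\psi_{\chi,k-n-2})=\operatorname{Im}(\psi_{\chi,(d+\chi)-1+\cdots})$ — here with $d$ replaced appropriately so that the indices match $\psi_{\chi,d+\chi-1}$ — with $\bigcap \ker(D_{i,j}\circ D_{r,s})$; one must double-check the numerology $k-n-2 = d+\chi-1$, which holds exactly because $\chi = k-d-n-1$.

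For the case $\chi=0$, the sheaf $\mathcal{O}_{\PP^n}(d-k)$ has $d-k=-(n+1)$, so $H^n\mathcal{O}_{\PP^n}(d-k)\cong\mathbb{C}$ (dual to $S^0U^\vee=\mathbb{C}$), and $H^n\mathcal{O}_{\PP^n}(1-k)^\vee\cong S^{d-2}U^\vee$. The dual map becomes $T^\perp\otimes\mathbb{C}\to U^\vee\otimes S^{d-2}U^\vee$, essentially the polarization $p_1$ restricted to $T^\perp\subset S^{d-1+1}U^\vee$... no: $T^\perp\subset S^dU^\vee$, so it is $p_1|_{T^\perp}\colon T^\perp\to U^\vee\otimes S^{d-1}U^\vee$ wait the target should be $U^\vee\otimes S^{d-2}U^\vee$ — I need $k-n-2 = d-2$ when $\chi=0$, consistent. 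Since $p_1$ is injective, $\operatorname{rk}$ equals $\dim T^\perp$, so $h^{n-1} = \dim(U\otimes S^{d-2}U) - \dim T^\perp$; rewriting via the multiplication map $\mu\colon U\otimes S^{d-1}U\to S^dU$ and its dual (polarization), this equals $\dim \mu^{-1}(T)$. I would verify this last identity by a dimension count using $\dim\mu^{-1}(T)=\dim\ker\mu + \dim(T\cap\operatorname{Im}\mu)=\dim\ker\mu+\dim T$ (as $\mu$ is surjective for $d\geq 1$) and matching it against $(n+1)\binom{d+n-2}{n} - (N+1-s-1)$-type bookkeeping.

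The main obstacle I expect is the adjointness claim: precisely identifying the connecting map in the long exact sequence (equivalently the dual of the $H^n$-level map) with the transpose of $\psi_{\chi,k-n-2}$, and correctly tracking which factor of $S^\chi U\otimes S^dU$ is dualized against $(T^\perp)^\vee$ versus which is the "free" polarization factor. Sign and scalar normalizations of $p_k$ and the perfect pairing must be handled so that "$(T^\perp)^\perp = T$" lands in the right tensor slot; getting $\operatorname{Im}(\psi)\cap(S^\chi U\otimes T)$ rather than, say, $S^\chi U\otimes T$ intersected with the wrong subspace hinges entirely on this bookkeeping. Once the adjointness is pinned down, the rest is Serre duality plus the already-established Propositions \ref{propinter} and \ref{propIm-ker}.
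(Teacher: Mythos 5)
Your proposal is correct and follows essentially the same route as the paper: long exact sequence of (\ref{eq:normalsheaves}), Serre duality to realize $H^{n-1}(\N_f(-k))$ as the kernel of a map $U\otimes S^{k-n-2}U\to (T^\perp)^\vee\otimes S^{\chi}U$ defined by the degree-$(d-1)$ differential operators $\partial_{\partial_j}(g_i)$, identification of that kernel with $\operatorname{Im}(\psi_{\chi,k-n-2})\cap(S^{\chi}U\otimes T)$ via the adjunction between contraction and multiplication, and then Proposition \ref{propIm-ker}. The adjointness you flag as the main obstacle is exactly what the paper pins down, via the elementary identity $\partial_{\partial_j}(g)(h)=g(x_jh)$ for $g\in S^dU^{\vee}$, $h\in S^{d-1}U$, together with the commutation of the contraction by $p\in S^{\chi}U^{\vee}$ with the operators $\partial_{\partial_j}(g)$; the implicit injectivity of $\psi_{\chi,k-n-2}$ needed to identify the kernel with its image follows from the irreducible-decomposition argument in the proof of Proposition \ref{propIm-ker}.
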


\begin{proof}
Firstly let us assume $\chi  \geq 1$ and let us choose a base for $T^{\perp }$ $ \subset S^{d}U^{ \vee }$ and for $(T^{\perp })^{ \vee } \simeq S^{d}U/T ,$ say $T^{\perp } = \langle g_{0} , . . . ,g_{s} \rangle $ and $(T^{\perp })^{ \vee } = \langle g_{0}^{ \vee } , . . . ,g_{s}^{ \vee } \rangle  .$ Then the sheaf map

\begin{center}$U \otimes \mathcal{O}_{\mathbb{P}^{n}}(1) \rightarrow (T^{\perp })^{ \vee } \otimes _{}\mathcal{O}_{\mathbb{P}^{n}}(d)$\end{center}\par
\noindent can be described as follows:

$x_{1} \otimes l_{1} +x_{2} \otimes l_{2} + . . . +x_{n} \otimes l_{n}$ $ \rightarrow $

$ \rightarrow {\displaystyle\sum _{i =0...s}}g_{i}^{ \vee } \otimes l_{1} \partial _{ \partial _{1}}(g_{i}) +{\displaystyle\sum _{i =0...s}}g_{i}^{ \vee } \otimes l_{2} \partial _{ \partial _{2}}(g_{i}) + . . . . +{\displaystyle\sum _{i =0...s}}g_{i}^{ \vee } \otimes l_{n} \partial _{ \partial _{n}}(g_{i}$)

\noindent where $l_{j}$ are generic local sections of $\mathcal{O}_{\mathbb{P}^{n}}(1);$ recall that $g_{i} \in S^{d}U^{ \vee }$ hence it is a degree $d$ polynomial in $ \partial _{0} , . . . , \partial _{n} ,$ so that every $ \partial _{ \partial _{j}}(g_{i})$  is a degree $d -1$ polynomial in $ \partial _{0} , . . . , \partial _{n} .$ By Serre's duality we can identify 

$H^{n}(\mathbb{P}^{n} ,_{}\mathcal{O}_{\mathbb{P}^{n}}(1 -k)) \simeq H^{0}(\mathbb{P}^{n} ,_{}\mathcal{O}_{\mathbb{P}^{n}}(k -n -2))^{ \vee } \simeq S^{k -n -2}U$ and 

$H^{n}(\mathbb{P}^{n} ,_{}\mathcal{O}_{\mathbb{P}^{n}}(d -k)) \simeq H^{0}(\mathbb{P}^{n} ,_{}\mathcal{O}_{\mathbb{P}^{n}}(k -d -n -1))^{ \vee } \simeq S^{k -d -n -1}U$. 

In our case the sheaf map $\mathcal{O}_{\mathbb{P}^{n}}(1 -k) \rightarrow _{}\mathcal{O}_{\mathbb{P}^{n}}(d -k)$ is defined by global sections belonging to $H^{0}(\mathbb{P}^{n} ,_{}\mathcal{O}_{\mathbb{P}^{n}}(d -1)) \simeq S^{d -1}U^{ \vee } ,$ i.e. by the elements $ \partial _{ \partial _{j}}(g_{i})$ when we tensor by $(T^{\perp })^{ \vee }$, hence the induced map $H^{n}(\mathbb{P}^{n} ,_{}\mathcal{O}_{\mathbb{P}^{n}}(1 -k)) \rightarrow H^{n}(\mathbb{P}^{n} ,_{}\mathcal{O}_{\mathbb{P}^{n}}(d -k))$ can be considered as a linear map $S^{k -n -2}U \rightarrow S^{k -d -n -1}$$U$ acting as a differential operator of degree $d -1$ and the map $U \otimes H^{n}(\mathbb{P}^{n} ,_{}\mathcal{O}_{\mathbb{P}^{n}}(1 -k)) \rightarrow (T^{\perp })^{ \vee } \otimes H^{n}(\mathbb{P}^{n} ,_{}\mathcal{O}_{\mathbb{P}^{n}}(d -k))$ can be viewed as defined by the $(n +1)(s +1)$ differential operators $ \partial _{ \partial _{0}}(g_{i}) , . . . , \partial _{ \partial _{n}}(g_{i})$ for $i =0 , . . . ,s$.

Therefore we have that $H^{n -1}(\mathcal{N}_{f}( -k))$ is the kernel of the linear map

\begin{center}$\mu  :U \otimes S^{k -n -2}U \rightarrow (T^{\perp })^{ \vee } \otimes S^{k -d -n -1}U$\end{center}\par
\noindent defined by:

\begin{center}$x_{j} \otimes q \rightarrow {\displaystyle\sum _{i =0...s}}g_{i}^{ \vee } \otimes  \partial _{ \partial _{j}}(g_{i})(q)$ for any $q \in S^{k -n -2}U$, for any $j =0 , . . .n .$\end{center}\par
$\ker (\mu )$$ =\{$ $x_{0} \otimes q_{0} + . . . +x_{n} \otimes q_{n}$$ \in U \otimes S^{k -n -2}U$ \textbar{} $ \partial _{ \partial _{0}}(g_{i})(q_{0}) + . . . + \partial _{ \partial _{n}}(g_{i})(q_{n}) =0$ for any $i =0 , . . . ,s\} .$

This is equivalent to say that

$\{ \partial _{ \partial _{0}}(g)(q_{0}) + . . . + \partial _{ \partial _{n}}(g)(q_{n}) =0 ,$ as element of $S^{\chi }U ,$  for any $g \in T^{\perp }\} \Longleftrightarrow $\_

$\{p[ \partial _{ \partial _{0}}(g)(q_{0}) + . . . + \partial _{ \partial _{n}}(g)(q_{n})] =0$ for any $g \in T^{\perp }$, for any $p \in S^{\chi }U^{ \vee }\} \Longleftrightarrow $

$\{ \partial _{ \partial _{0}}(g)(p(q_{0})) + . . . + \partial _{ \partial _{n}}(g)(p(q_{n})) =0$ for any $g \in T^{\perp }$, for any $p \in S^{\chi }U^{ \vee }\} .$

Recall that $ \partial _{ \partial _{j}}(g)$ and $p$ act as derivations, hence they are commutative. 

Let us remark that $ \partial _{ \partial _{j}}(g)(h) =g(x_{j}h)$ for any $h \in S^{d -1}U$ if $g \in S^{d -1}U^{ \vee }$ as in this case; (note that $p(q_{i}) \in S^{d -1}U$), hence the above set coincides with

$\{g[x_{0}p(q_{0}) + . . . +x_{n}p(q_{n})] =0$ for any $g \in T^{\perp }$, for any $p \in S^{k -d -n -1}U^{ \vee }$ $\} \Longleftrightarrow $

$\{x_{0}p(q_{0}) + . . . +x_{n}p(q_{n}) \in T$ for any $p \in S^{k -d -n -1}U^{ \vee }$$\}$.

In conclusion: 

$H^{n -1}(\mathcal{N}_{f}( -k)) =\{x_{0} \otimes q_{0} + . . . +x_{n} \otimes q_{n} \in U \otimes S^{k -n -2}U\vert x_{0}p(q_{0}) + . . . +x_{n}p(q_{n}) \in T$ for any $p \in S^{\chi }U^{ \vee }\} =$

$ =\{x_{0} \otimes q_{0} + . . . +x_{n} \otimes q_{n} \in U \otimes S^{k -n -2}U\vert x_{0}p(q_{0}) + . . . +x_{n}p(q_{n}) \in T$ for any monomial $p \in S^{\chi }U^{ \vee }\} .$

Then $H^{n -1}(\mathcal{N}_{f}( -k)) =$

$ =\{x_{0} \otimes q_{0} + . . . +x_{n} \otimes q_{n} \in U \otimes S^{k -n -2}U\vert \psi _{\chi  ,k -n -2}(x_{0} \otimes q_{0} + . . . +x_{n} \otimes q_{n}) \in S^{\chi }U \otimes T\} =$

$ =\ensuremath{\operatorname*{Im}}(\psi _{\chi  ,k -n -2}) \cap (S^{\chi }U \otimes T)$.

In fact, if $x_{0}p(q_{0}) + . . . +x_{n}p(q_{n}) \in T$ for any $p \in S^{k -d -n -1}U^{ \vee } ,$ obviously $\psi _{\chi  ,k -n -2}(x_{0} \otimes q_{0} + . . . +x_{n} \otimes q_{n}) \in S^{\chi }U \otimes T .$ For the other direction recall that if $T = \langle \tau _{0} , . . . ,\tau _{e} \rangle $ then $\{\frac{d !}{(d +k) !}\binom{k}{I}x^{I} \otimes \tau _{0} , . . . ,\frac{d !}{(d +k) !}\binom{k}{I}x^{I} \otimes \tau _{e}\}$ with $\vert I\vert  =k ,$ is a base for $S^{\chi }U \otimes T ,$ hence if $\psi _{\chi  ,k -n -2}(x_{0} \otimes q_{0} + . . . +x_{n} \otimes q_{n}) \in S^{\chi }U \otimes T$ then $x_{0}p(q_{0}) + . . . +x_{n}p(q_{n}) \in T ,$ for any monomial $p \in S^{k -d -n -1}U^{ \vee }$, because $x_{0}p(q_{0}) + . . . +x_{n}p(q_{n})$ is a linear combination of $\tau _{0} , . . . ,\tau _{e} .$

\noindent By Proposition \ref{propIm-ker} we know that

\begin{center}$\ensuremath{\operatorname*{Im}}(\psi _{\chi  ,k -n -2}) ={\displaystyle\bigcap _{i ,j =0...n;r ,s =0...n}}\ker (D_{i ,j} \circ D_{r ,s}) .$\end{center}\par
Then $H^{n -1}(\mathcal{N}_{f}( -k)) =\ensuremath{\operatorname*{Im}}(\psi _{\chi  ,k -n -2}) \cap (S^{\chi }U \otimes T) =$

$ =(S^{\chi }U \otimes T) \cap \left({\displaystyle\bigcap _{i ,j =0...n;r ,s =0...n}}\ker (D_{i ,j} \circ D_{r ,s})\right) .$

Obviously if $\chi  =0$ (hence $k =d +n +1)$ the above formula cannot hold because $\psi \chi  ,k -n -2$ cannot be defined (recall 6) at {\textsection}2). However, by following the above proof, it is easy to see that $h^{n -1}(\mathcal{N}_{f}( -k)) =dim(\mu ^{ -1}(T))$ where $\mu  :U \otimes S^{d -1}U \rightarrow S^{d}U$ is the multiplication map.
\end{proof}

\begin{corollary}
If ${\displaystyle\bigcap _{i ,j =0...n;r ,s =0...n}}\ker (D_{i ,j} \circ D_{r ,s}) =0$ then $H^{n -1}(\mathcal{N}_{f}( -k)) =0.$
\end{corollary}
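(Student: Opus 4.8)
The plan is to read the statement off directly from Theorem \ref{teo normale}, which already realizes $H^{n-1}(\mathcal{N}_{f}(-k))$ as an intersection one of whose factors is precisely $\bigcap_{i,j;r,s}\ker(D_{i,j}\circ D_{r,s})$. So essentially nothing is left to do except observe an inclusion of subspaces: all the substantive work — the Pieri-type decomposition of Proposition \ref{propPieri}, the identification of the double kernels in Proposition \ref{propinter}, and the description of $\operatorname{Im}(\psi_{\chi,k-n-2})$ in Proposition \ref{propIm-ker} — has been carried out in the build-up to and proof of Theorem \ref{teo normale}.

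Concretely, assume first $\chi\geq 1$, the range in which the explicit formula of Theorem \ref{teo normale} is stated. That theorem gives, inside $S^{\chi}U\otimes S^{d}U$,
$$H^{n-1}(\mathcal{N}_{f}(-k))=(S^{\chi}U\otimes T)\cap\Bigl(\bigcap_{i,j=0...n;\,r,s=0...n}\ker(D_{i,j}\circ D_{r,s})\Bigr).$$
In particular $H^{n-1}(\mathcal{N}_{f}(-k))\subseteq\bigcap_{i,j;r,s}\ker(D_{i,j}\circ D_{r,s})$, so if the right-hand side is $0$ then $H^{n-1}(\mathcal{N}_{f}(-k))=0$, which is the assertion. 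Equivalently, Proposition \ref{propIm-ker} lets one rewrite the hypothesis as $\operatorname{Im}(\psi_{\chi,k-n-2})=0$, and then the first equality of Theorem \ref{teo normale}, namely $H^{n-1}(\mathcal{N}_{f}(-k))=\operatorname{Im}(\psi_{\chi,k-n-2})\cap(S^{\chi}U\otimes T)$, gives the conclusion at once.

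For the boundary value $\chi=0$ (that is, $k=d+n+1$) the map $\psi_{\chi,k-n-2}$ is not defined, and one instead invokes the last assertion of Theorem \ref{teo normale}, $h^{n-1}(\mathcal{N}_{f}(-k))=\dim\mu^{-1}(T)$ with $\mu\colon U\otimes S^{d-1}U\to S^{d}U$ the multiplication map; the same containment argument then applies with $\mu^{-1}(T)$ sitting inside $U\otimes S^{d-1}U$ and its Pieri decomposition from Proposition \ref{propPieri}. I expect no real obstacle anywhere: the only points requiring attention are keeping track of which tensor space the operators $D_{i,j}$ act on and the separate treatment of $\chi=0$. One should also bear in mind that, by the computation in the proof of Proposition \ref{propIm-ker}, $\operatorname{Im}(\psi_{\chi,k-n-2})$ always contains the nonzero subspace $p_{\chi}(S^{d+\chi}U)$, so this particular vanishing criterion is quite restrictive; the more robust quantitative consequence of Theorem \ref{teo normale} is the $T$-dependent containment $H^{n-1}(\mathcal{N}_{f}(-k))\subseteq S^{\chi}U\otimes T$, giving $h^{n-1}(\mathcal{N}_{f}(-k))\le\binom{\chi+n}{n}(e+1)$.
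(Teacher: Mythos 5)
Your proof is correct and is exactly the argument the paper intends: the corollary is an immediate consequence of the identity $H^{n-1}(\mathcal{N}_{f}(-k))=(S^{\chi}U\otimes T)\cap\bigl(\bigcap_{i,j;r,s}\ker(D_{i,j}\circ D_{r,s})\bigr)$ from Theorem \ref{teo normale}, for which the paper supplies no separate proof. Your closing observation that $\operatorname{Im}(\psi_{\chi,k-n-2})$ always contains the nonzero subspace $p_{\chi}(S^{d+\chi}U)$ (so the hypothesis is in fact never satisfied and the criterion is essentially vacuous) is a fair and worthwhile remark, but it does not affect the validity of the deduction.
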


\begin{remark}
With the same technique one can also prove that 
$$H^{n -1}(\mathcal{T}_{f}( -k)) =(S^{\chi }U \otimes T) \cap \left({\displaystyle\bigcap _{i ,j =0...n}}\ker (D_{i ,j})\right) .$$
\end{remark}

Finally, in the case of an embedding $f:\PP^n\to\PP^s$, taking into account the discussion in Remark 2 of section 3, we get the following result.
\begin{corollary} 
\label{corip} Let $f:\PP^n\to \PP^s$ be an embedding, and let $X=\operatorname{Im}(f)$. Then the normal bundle $\N_X$ is identified with $\N_f$ and then Proposition \ref{prop:cohomfacile} and Theorem \ref{teo normale} provide the dimensions of the cohomology spaces $H^i\N_X(-k)$, for any $i,k$.
\end{corollary}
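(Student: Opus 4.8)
The plan is to treat this corollary as a formal consequence of Remark 2 of Section \ref{Section restricted tg}, of the fact that $f$ is an isomorphism onto $X$, and of the already proved Proposition \ref{prop:cohomfacile} and Theorem \ref{teo normale}; no new computation is involved. First I would recall from Remark 2 that, identifying $\PP^n$ with its image $X$, the bundle $\T_f=f^{\ast}\T_{\PP^s}$ is the restriction $\T_{\PP^s}|_X$, the differential $df$ appearing in (\ref{eq:exactTN}) becomes the inclusion $\T_X\hookrightarrow\T_{\PP^s}|_X$, and hence its cokernel $\N_f$ is canonically identified with $f^{\ast}\N_X\cong\N_X$. This establishes the first assertion of the corollary.

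Next I would use that $f\colon\PP^n\to X$ is an isomorphism of schemes, so that $H^i(X,\mathcal F)=H^i(\PP^n,f^{\ast}\mathcal F)$ for every coherent sheaf $\mathcal F$ on $X$ and every $i$. Writing $\mathcal O_X(1)$ for the ample generator of $\Pic(X)\cong\Pic(\PP^n)\cong\Z$, which corresponds to $\mathcal O_{\PP^n}(1)$ under $f$ (so that the hyperplane bundle $\mathcal O_{\PP^s}(1)|_X$ equals $\mathcal O_X(d)$, consistently with $f$ being given by degree-$d$ forms), one has $f^{\ast}\big(\N_X\otimes\mathcal O_X(-k)\big)=\N_f(-k)$, and therefore $H^i(X,\N_X(-k))=H^i(\PP^n,\N_f(-k))$ for all $i$ and all $k$.

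Finally I would invoke the computations of the right-hand side already carried out in this section: Proposition \ref{prop:cohomfacile} reads off $h^i(\N_f(-k))$ from the ranks and degrees of the sheaves in (\ref{eq:normalsheaves}) in all cases except $i\in\{n-1,n\}$ with $k\geq d+n+1$, while Theorem \ref{teo normale} computes $h^{n-1}(\N_f(-k))$ in those remaining cases as the dimension of the explicit intersection $(S^{\chi}U\otimes T)\cap\bigcap_{i,j,r,s}\ker(D_{i,j}\circ D_{r,s})$ with $\chi:=k-d-n-1$ (resp.\ as $\dim\mu^{-1}(T)$ when $\chi=0$), after which $h^n(\N_f(-k))$ follows from the four-term exact sequence displayed just before Theorem \ref{teo normale}. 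The only point that requires any care — and the place where one could slip — is the identification of the twisting line bundle, i.e.\ checking that the $(-k)$-twist natural on $X$ matches the $(-k)$-twist on $\PP^n$ occurring in Proposition \ref{prop:cohomfacile} and Theorem \ref{teo normale}; this is exactly the content of the Picard-group identification above, and everything else is immediate.
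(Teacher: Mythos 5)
Your proposal is correct and follows the same route as the paper, which states this corollary with no further argument beyond the pointer to Remark 2 of Section 3 (the identifications $\T_f=\T_{\PP^s}|_X$ and $\N_f=f^{\ast}\N_X\cong\N_X$) combined with Proposition \ref{prop:cohomfacile} and Theorem \ref{teo normale}. Your extra care about matching the $(-k)$-twist on $X$ with the one on $\PP^n$ is a reasonable clarification but does not change the argument.
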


\section{The monomial case for $n =2.$
}
\label{Section n=2}

In this section we will assume that $f$ is embedding, hence in particular $X$ smooth, as in Corollary \ref{corip}. Moreover we will assume that $n =2 ,$ i.e. $X$ is a rational surface, and we will also assume that the map $f$ is {\em defined by monomials}.  In particular the vector space $T$ will be assumed to be generated by monomials. 

We remark that the same hypotheses in the case of curves, i.e. $\dim(X)=1$, have led to interesting applications (see \cite{A-R2} and \cite{A-R-T}).

When $n =2$ it is convenient to use different names for the three variables: $U = \langle x ,y ,u \rangle $ and $U^{ \vee } = \langle  \partial _{x} , \partial _{y} , \partial _{u} \rangle ;$ in this way $T$ is generated by $e +1$ distinct monomials $x^{\alpha }y^{\beta }u^{\gamma } \leftrightarrow (\alpha  ,\beta  ,\gamma )$ such that $\alpha  +\beta  +\gamma  =d$ and $\alpha  ,\beta  ,\gamma  \in [0 ,d]$.  As any one of these monomials is in fact identified by a triple of integers having the above properties, we can say that any vector space $T$ under consideration can be identified with a choice of $e +1$ triples $(\alpha  ,\beta  ,\gamma )$ such that $\alpha  +\beta  +\gamma  =d $ and $\alpha  ,\beta  ,\gamma  \in [0 ,d]$. Note that $f$ and $X$ are completely defined by choosing a suitable set $T$ of monomials as above.

To get a simple way to calculate the cohomology by hand, in most cases, it is useful to consider a graph $T\Delta$ whose vertices are the triples of integers such that $\alpha  +\beta  +\gamma  =d$ and $\alpha  ,\beta  ,\gamma  \in [0 ,d]$ and whose edges join every triple $(\alpha  ,\beta  ,\gamma )$ with $(\alpha +1 ,\beta  ,\gamma )$, $(\alpha  ,\beta+1  ,\gamma )$, $(\alpha  ,\beta  ,\gamma+1 )$ whenever this is possible taking into account that $\alpha  ,\beta  ,\gamma  \in [0 ,d]$.
This graph naturally assumes a triangular shape; we will call $\Delta$ this triangle: the three corners of $\Delta$ are $(d ,0 ,0) ,$ $(0 ,d ,0) ,$ $(0 ,0 ,d)$ and the three sides of $\Delta$ are the sets:
$\{(0 ,\beta  ,\gamma )| \beta+\gamma=d\}$, 
$\{(\alpha  ,0 ,\gamma )| \alpha+\gamma=d\}$, 
$\{(\alpha  ,\beta  ,0)| \alpha+\beta=d\}$.

In the next proposition we will show that, as $f$ has no base points, the set of triples of $\Delta$ defining $T$ is "far" from the corners of the triangle.

\begin{proposition}
\label{propP^2}Let $T$ $ \subset S^{d}U$ be an $(e +1)$-dimensional vector space defining a smooth projection of $\nu _{d}(\mathbb{P}^{2})$ in $\mathbb{P}^{s}$ as explained at the beginning of section \ref{Section tangent bundle}. Assume that the corresponding map $f$ is an embedding. Then, if we identify $T$ with a set of triples ${(\alpha  ,\beta  ,\gamma )}$ in $\Delta $  as above, we have: $\alpha  \leq d -2 ,$ $\beta  \leq d -2 ,$ $\gamma  \leq d -2$.
\end{proposition}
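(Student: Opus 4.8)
The plan is to deduce the statement from the geometric form of the embedding hypothesis recorded in Remark 2 of Section~\ref{Section tangent bundle}. Since $f=\pi_T\circ\nu_d$ is an embedding it is in particular an immersion, so the linear projection $\pi_T$ may not annihilate any tangent direction of $\nu_d(\mathbb{P}(U))$; a direct computation of the differential of a linear projection shows that this is equivalent to requiring that $\mathbb{P}(T)$ be disjoint from the embedded tangent space to $\nu_d(\mathbb{P}(U))$ at each of its points. (Alternatively one may simply invoke $Tan(Y)\subseteq Sec(Y)$ for smooth $Y$ together with $\mathbb{P}(T)\cap Sec(\nu_d(\mathbb{P}(U)))=\varnothing$.) In either form the conclusion to be used is
$$\mathbb{P}(T)\cap Tan(\nu_d(\mathbb{P}(U)))=\varnothing.$$

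Next I would make the tangent variety explicit. Differentiating the parametrization $\ell\mapsto[\ell^d]$ of $\nu_d(\mathbb{P}(U))$ shows that the affine cone over the embedded tangent space at $[\ell^d]$ is the $(n+1)$-dimensional subspace $\ell^{d-1}U=\langle\,\ell^{d-1}m\mid m\in U\,\rangle\subseteq S^dU$. Hence the displayed disjointness translates into the purely algebraic statement
$$T\cap(\ell^{d-1}U)=0\qquad\text{for every }\ell\in U\setminus\{0\}.$$

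I would then argue by contradiction. Suppose $T$ contains a monomial $x^{\alpha}y^{\beta}u^{\gamma}$ with $\alpha\ge d-1$. Since $\alpha+\beta+\gamma=d$ this forces $\beta+\gamma\le 1$, so $x^{\alpha}y^{\beta}u^{\gamma}=x^{d-1}\ell$ for some linear form $\ell\in\{x,y,u\}$; in particular $x^{\alpha}y^{\beta}u^{\gamma}\in x^{d-1}U$, the affine tangent cone of $\nu_d(\mathbb{P}(U))$ at $[x^d]$. Thus $0\ne x^{\alpha}y^{\beta}u^{\gamma}\in T\cap(x^{d-1}U)$, contradicting the previous display with $\ell=x$. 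Therefore every triple $(\alpha,\beta,\gamma)$ representing a generator of $T$ satisfies $\alpha\le d-2$; running the same argument with $y^{d-1}U$ and $u^{d-1}U$ in place of $x^{d-1}U$ (equivalently, using the $S_3$-symmetry of the whole construction under permutations of the basis $\{x,y,u\}$ of $U$) yields $\beta\le d-2$ and $\gamma\le d-2$ as well.

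The only step requiring genuine care is the first one, namely passing from ``$f$ is an embedding'' to the disjointness of $\mathbb{P}(T)$ from $Tan(\nu_d(\mathbb{P}(U)))$. This is classical — either through $Tan\subseteq Sec$ as already noted in Remark 2, or through the characterization of an embedding as a closed immersion, whose differential is fibrewise injective — and once it is in place the rest of the argument is an immediate one-line computation, so I do not anticipate any real obstacle beyond identifying the correct geometric input.
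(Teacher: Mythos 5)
Your proof is correct, and it takes a genuinely different (more conceptual) route than the paper's. The paper argues in two separate steps: it first rules out $\alpha=d$ by noting that $x^{d}\in T$ would put $\nu_{d}(1:0:0)$ inside the centre of projection $\mathbb{P}(T)$, contradicting base-point-freeness; it then rules out $\alpha=d-1$ by an explicit computation in the affine chart $x\neq 0$ with coordinates $v=y/x$, $w=u/x$, checking that if $(d-1,1,0)\in T$ then every monomial $v^{\beta}w^{\gamma}$ actually occurring in $f$ has vanishing $v$-derivative at the origin, so $df$ drops rank at $(1:0:0)$. Your argument packages both steps into the single geometric statement $\mathbb{P}(T)\cap Tan(\nu_{d}(\mathbb{P}^{2}))=\varnothing$, combined with the identification of the affine cone over the embedded tangent plane at $[\ell^{d}]$ with $\ell^{d-1}U$; for $\ell=x$ this subspace is spanned by exactly the monomials with $\alpha\geq d-1$ (including $x^{d}$ itself, which is why your single contradiction also absorbs the paper's first step). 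The two proofs are ultimately two faces of the same fact — injectivity of $d(\pi_{T}\circ\nu_{d})$ at a coordinate point — but yours is shorter and coordinate-free once the standard inputs are granted (either $Tan\subseteq Sec$ together with the disjointness from $Sec$ recorded in Remark 2, or the differential criterion for linear projections, plus the description of the tangent cone of the Veronese), whereas the paper's is entirely self-contained and elementary. The one step you rightly flag as needing care, the passage from ``$f$ is an embedding'' to $\mathbb{P}(T)\cap Tan(\nu_{d}(\mathbb{P}^{2}))=\varnothing$, is indeed classical and both of your suggested justifications are valid, so there is no gap.
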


\begin{proof}
 The property is symmetric with respect to $\alpha  ,\beta  ,\gamma  ,$ so we can prove it only for $\alpha  .$ We have to show that $\alpha \leq d-2$ for any triple $(\alpha,\beta,\gamma) \in T$. First, let us show that $\alpha \leq d-1$.

Let us choose $(x^{d},x^{d -1}y , . . . ,u^{d})$ as a basis for $S^{d}U$ and let us choose coordinates in $\mathbb{P}(S^{d}U)$ with respect to this basis.  By contradiction, let us assume that $(d ,0 ,0) \in T ,$ hence $x^{d} \in T \subset S^{d}U ,$ and hence $(1 :0 : . . . :0) \in \mathbb{P}(T) \subset \mathbb{P}^{N}$$ =\mathbb{P}(S^{d}U)$ (here $N =\binom{d +2}{d}-1$). But this is not possible because $\mathbb{P}(T) \cap \nu _{d}(\mathbb{P}^{2}) = \varnothing $, as $X$ is smooth, while $(1 :0 : . . . :0) \in \nu _{d}(\mathbb{P}^{2})$ because $(1 :0 : . . . :0) = \nu _{d}(1 :0 :0)$ (recall that $(1 :0 :0)$ cannot be a base point by assumption).

Now let us show that $\alpha \leq d-2$. Let us assume, always by contradiction, that $(d -1 ,1 ,0) \in T .$ We know that $f(x :y :u) =( . . . :\lambda (\alpha  ,\beta  ,\gamma )x^{\alpha }y^{\beta }u^{\gamma } : . . .)$ is defined by all triples $(\alpha  ,\beta  ,\gamma )$ in $\Delta \backslash T ,$ with $\lambda (\alpha  ,\beta  ,\gamma )$ suitable non zero coefficients. By our assumption $(\alpha  ,\beta  ,\gamma ) \neq (d -1 ,1 ,0) .$ Let us consider the restriction $\phi$ of $f$ to the affine plane for which $x \neq 0$. In this affine plane we can choose coodinates $(v,w)$ by putting $v : =y/x$ and $w : =u/x$. In this  setting $\phi$ is an affine map such that $\phi(v ,w) =( . . . ,\lambda (\alpha = d-\beta-\gamma  ,\beta  ,\gamma )v^{\beta }w^{\gamma } , . . .)$. We have that:

if $\beta  \geq 2 ,$ then $\gamma  \geq 0$

if $\beta  =1 ,$ then $\gamma  \geq 1$

if $\beta  =0 ,$ then $\gamma  \geq 1$

\noindent because $(\beta  ,\gamma ) \neq (1 ,0) .$ Of course $(\beta  ,\gamma ) \neq (0 ,0)$ by the first part of the proof. 

Let us compute $\frac{ \partial \phi}{ \partial v} .$ Taking into account the above three possibilities, we get  that $\frac{ \partial \phi}{ \partial v}(0 ,0) =\underline{0}$ in any case. But this is not possible, because the smoothness of $X$ at $f(1 :0 :0) = \phi(0,0)$ implies that  $\frac{ \partial \phi}{ \partial v}(0 ,0)$ and $\frac{ \partial \phi}{ \partial w}(0 ,0)$ must be independent vectors, generating the tangent (affine) plane to $X$ at $f(1 :0 :0) = \phi(0,0) .$

The same argument works if we assume $(d -1 ,0 ,1) \in T$, so the Proposition is proved.
\end{proof}

\begin{remark}
Note that there exist examples of smooth surfaces $X$ for which

\begin{center}$T =\{$$(\alpha  ,\beta  ,\gamma )\vert \alpha  = d -2 , $$\beta  = d -2 , $$\gamma  = d -2\}$ \end{center}\par
Hence Proposition \ref{propP^2} is sharp. For instance let $d =3$ and

\begin{center}$f(x :y :u) =(x^{3} :x^{2}y :x^{2}u :xy^{2} :xu^{2} :y^{3} :y^{2}u :yu^{2} :u^{3})$\end{center}\par
\noindent with $T = \langle xyu \rangle $ and $\dim (T) =1.$ In this case $X$ is a smooth surface in $\mathbb{P}^{8}$ of degree $9$, as you can verify by using a computer algebra system as Macaulay.

\end{remark}

In the last part of this section we will state a Proposition showing that, for monomially embedded projective planes, in most cases, the calculation of $\partial T$ and $\partial^{-1} T$ become easier, making easier to compute the cohomology of the restricted tangent bundle.

Recall that the monomials generating $T$ in $\Delta$ are vertices ${\tau_i}$ of the graph $T\Delta$, hence we can define the distance between two monomials generating $T$ as the usual distance $\delta $ on a graph. We have the following

\begin{proposition}
\label{propgrafo} Let $T$ $ \subset S^{d}U$ be an $(e +1)$-dimensional vector space defining a smooth projection of $\nu _{d}(\mathbb{P}^{2})$ in $\mathbb{P}^{s}$ as explained at the beginning of section \ref{Section tangent bundle}. Assume that the corresponding map $f$ is an embedding. Let $T\Delta$ and $\Delta$ be the graphs defined as above. Let us denote $T = \langle \tau _{0} , . . .\tau _{e} \rangle $ and let us assume that $\delta (\tau _{i} ,\tau _{j}) \geq 2$ for any $i ,j$. Then:

i) $\dim ( \partial T) =3a +2b$, where $a$ is the number of generators of $T$ which are not on the sides of $\Delta$ and $b$ is the number of the other generators of $T$ ;

ii) $\dim( \partial ^{ -1}T)=0$.

\end{proposition}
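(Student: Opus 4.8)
The plan is to translate both claims into combinatorics on the graph $T\Delta$. The key fact I would record first is that in $T\Delta$ two distinct degree-$d$ monomials are joined by an edge exactly when one is obtained from the other by moving one unit from one exponent to another, equivalently when they share a common divisor of degree $d-1$, i.e. when $\mu=x_a m$ and $\mu'=x_b m$ for distinct variables $x_a,x_b\in\{x,y,u\}$ and a monomial $m$ of degree $d-1$. Consequently the hypothesis $\delta(\tau_i,\tau_j)\ge 2$ for all $i\ne j$ says precisely that no two of the generators $\tau_0,\dots,\tau_e$ have a common divisor of degree $d-1$. I would also note at the outset that, by Proposition \ref{propP^2}, none of the $\tau_i$ is a corner of $\Delta$, so a generator lying on a side of $\Delta$ has exactly one vanishing exponent (hence is divisible by exactly two of the three variables), while a generator not on any side is divisible by all three.

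For part i), I would write $\partial T=\partial_xT+\partial_yT+\partial_uT$ as the span of the elements $\partial_v\tau_i$ ($v\in\{x,y,u\}$, $0\le i\le e$), each of which is either $0$ (when $x_v\nmid\tau_i$) or a nonzero scalar multiple of the monomial $\tau_i/x_v$. By the remark above, the number of nonzero elements among $\partial_x\tau_i,\partial_y\tau_i,\partial_u\tau_i$ is $3$ for an interior generator and $2$ for a side generator, producing $3a+2b$ monomials in total, so it only remains to check these are pairwise non-proportional. Within a single $\tau_i$ this is immediate, since $\tau_i/x_v=\tau_i/x_w$ forces $x_v=x_w$; for two distinct generators an equality $\tau_i/x_v=\tau_j/x_w$ (up to scalar) would exhibit a common divisor of degree $d-1$ of $\tau_i$ and $\tau_j$, contradicting $\delta(\tau_i,\tau_j)\ge 2$. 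This yields $\dim(\partial T)=3a+2b$.

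For part ii), I would argue by contradiction: take $0\ne q\in\partial^{-1}T\subseteq S^{d+1}U$ and a monomial $M=x^ay^bu^c$ of degree $d+1$ appearing in $q$ with nonzero coefficient. For each variable $x_v\mid M$, the monomial $M/x_v$ appears in $\partial_vq$ with nonzero coefficient and with no possible cancellation, since distinct degree-$(d+1)$ monomials divisible by $x_v$ have distinct quotients by $x_v$; as $\partial_vq\in T$ and $T$ is a span of monomials, $M/x_v$ must be one of the $\tau_i$. If $M$ were a pure power $x_v^{d+1}$, then $M/x_v=x_v^{d}$ would be a corner of $\Delta$ contained in $T$, contradicting Proposition \ref{propP^2}; otherwise $M$ is divisible by two distinct variables $x_a,x_b$, and then $M/x_a$ and $M/x_b$ are distinct generators of $T$ sharing the common degree-$(d-1)$ divisor $M/(x_ax_b)$, so $\delta(M/x_a,M/x_b)=1$, again a contradiction. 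Hence $\partial^{-1}T=0$.

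I expect the only delicate point to be the bookkeeping in part i): pinning down the exact number of nonzero partial derivatives attached to each generator — which is exactly where the absence of corners (Proposition \ref{propP^2}) is used — and then checking that the distance hypothesis rules out every coincidence between monomials coming from different generators. With the dictionary ``common divisor of degree $d-1$ $\leftrightarrow$ edge of $T\Delta$'' in hand, the rest is routine.
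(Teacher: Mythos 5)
Your proposal is correct and follows essentially the same combinatorial approach as the paper, translating the distance hypothesis into the statement that no two generators share a degree-$(d-1)$ divisor and then counting quotient monomials for i) and deriving a contradiction from a degree-$(d+1)$ monomial for ii). If anything, your write-up is more explicit than the paper's at the two points it glosses over — why the monomials $\tau_i/x_v$ coming from different generators cannot coincide, and why a nonzero element of $\partial^{-1}T$ forces either a corner, an adjacent pair on a side, or a mutually adjacent triple.
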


\begin{proof}
i) Obviously, for a single monomial $(\alpha  ,\beta  ,\gamma ) \in T \subset \Delta  ,$ with $(\alpha  ,\beta  ,\gamma )$ not belonging to the three sides of $\Delta$, we have that $\dim ( \partial (\alpha  ,\beta  ,\gamma )) =3$; in fact $ \partial (\alpha  ,\beta  ,\gamma ) = \langle (\alpha  -1$, $\beta  ,\gamma ) ,(\alpha  ,\beta  -1 ,\gamma ) ,(\alpha  ,\beta  ,\gamma  -1) \rangle $.  Proposition \ref{propP^2} shows that $(\alpha,\beta,\gamma)$ cannot be a corner of $\Delta$. For a single monomial $(\alpha  ,\beta  ,\gamma ) \in $ $T$, belonging to any one of the three sides of $\Delta $, one has  $\dim ( \partial (\alpha  ,\beta  ,\gamma )) =2$.

In our assumptions every monomial of $T$ behaves as it were single; in other words: the contribution of action of $\partial$ over any monomial is not affected by the action over the other ones. Hence we have $\dim ( \partial T) =3a +2b$.

ii) For any single monomial $(\alpha  ,\beta  ,\gamma ) \in T \subset \Delta  ,$ we have that $ \partial ^{ -1}(\alpha  ,\beta  ,\gamma ) =0.$ To get examples for which $ \partial ^{ -1}T \neq 0 ,$ we need that, among the generators of $T$, there exists at least:

-  (first case) three monomials: {$(\alpha,\beta,\gamma), (\alpha - 1,\beta + 1,\gamma),(\alpha, \beta + 1,\gamma - 1)$} which are three vertices of $T\Delta$, not belonging to the sides of $\Delta$, whose mutual distance is 1 (giving rise to a subgraph of the triangular shape as the picture ``$\bigtriangledown$");

- (second case) a pair of monomials, whose distance is 1, lying on a side of $\Delta$.

In our assumptions every monomial of $T$ behaves as it were single (as in $i)$) and none of the above two cases can occur, so that $\dim( \partial ^{ -1}T)=0$.
\end{proof}

\begin{remark}
If in $T\Delta$ there exist pairs of generators for which $\delta  =1 ,$ then the calculation of $\dim ( \partial T)$ is more difficult.  In these cases, although the calculation can often be done by hand, a general formula is not available at the moment. As far as $\dim( \partial ^{ -1}T)$ is concerned,  if $T$ is the disjoint union of $h$ triples of the first case quoted in Proposition \ref{propgrafo} and $k$ pairs of the second case, then $\dim ( \partial ^{ -1}T) = h +k$. Otherwise the calculation is more difficult, as we said above. Note that if $T$ is a set of ``sparse'' monomials then $ \partial ^{ -1}T =0$.

\end{remark}

\section{An application to the normal sheaf of maps $f:\PP^2\to\PP^3$}\label{sec:nonsmooth}

In this section we will not assume that the map $f$ is an embedding. In this case $X$ can be singular, hence $\N_f$ cannot be the pull back of the normal bundle of $X$. However we have defined $\N_f$ as the cokernel of the sheaf map $df\colon T_{\PP^2}\to \T_f = f^\ast T_{\PP^3}$, so that there exists an exact sequence as the sequence (\ref{eq:exactTN}):
\begin{equation}\label{eq:exactN1}
0\to T_{\PP^2}\stackrel{df}\longrightarrow f^\ast T_{\PP^3}\to \N_f\to 0.
\end{equation}
Recall that one has the Euler sequences $0\to \OO_{\PP^2}\to\OO_{\PP^2}^3(1)\to T_{\PP^2}\to 0$ and $0\to \OO_{\PP^3}\to\OO_{\PP^2}^4(1)\to T_{\PP^3}\to 0$.
By pulling back the second one to $\PP^2$ by means of $f$, one obtains a simpler presentation of $\N_f$ given by
\begin{equation}\label{eq:exactN2} 
0\to \OO_{\PP^2}^3(1)\stackrel{F}\longrightarrow \OO^4_{\PP^2}(d)\to \N_f\to 0,
\end{equation} 
where $F$ is the map defined by the matrix $\left(\frac{\partial f_i}{\partial x_j}\right)$, (in the sequel we will denote by the same letters maps and matrices defining them). Note that the entries of $F$ are homogeneous polynomials of degree $d-1$ because $f$ is defined by polynomials of degree $d$.

Let us define $Z$ the subscheme of $\PP^2$ where $F$ drops rank, in the sequel $Z'$ will denote the $0$-dimensional part of $Z$, if any, when $dim(Z) = 1$. Note that $Z$ is also the branch locus of $f$.
\begin{example}\label{example:quadric}
Let $f\colon \PP^2\to \PP^3$ be given by $f(x:y:u)=(x^2:y^2:u^2:2xy)$ with $T=\langle xy,yu \rangle$ and $\dim(T)=2$.
Then 
$$F=2\left(\begin{array}{lll} x &0&0\\ 0&y&0\\0&0&u\\y&x&0\end{array}\right).$$
The closed set where $df$ drops rank is the set of common zeros of the $3\times 3$ minors of $F$, that is $Z=V(xyu,x^2u,y^2u)$. By the primary decomposition $(xyu,x^2u,y^2u)=(u)\cap (x,y)^2$, one sees that, as a scheme, $Z$ is the union of the line $l=V(u)$ with the fat point $P^{(2)}$ given by the first infinitesimal  neighborhood of the point $P=(0:0:1)$.  Denoting with $(t:s:v:w)$ the chosen coordinates of $\PP^3$, one sees that the image of $f$ is the cone $Y=V(4ts-w^2)$. 
Then one sees that the degree of $f\colon \PP^2\to Y$ is $\deg(f)=2$. The vertex of $Y$ is the point $Q=(0:0:1:0)$ and the schematic fiber of $f$ over $Q$ is $f^{\ast}(Q)=P^{(2)}$. This accounts for $P$ belonging to the branch locus of $f$. Moreover the pull-back of the conic $C=V(v,4ts-w^2)$ to $\PP^2$ is the line $l=V(u)$ counted with multiplicity $2$, and the restriction of $f$ to $l$ is the Veronese embedding of $l$ in $V(v)\cong\PP^2$, since it is defined by $f(x:y:0)=(x^2:y^2:0:2xy)$. In particular $f|_l\colon l\to C$ is injective, i.e. the cardinality of $f^{-1}(R)$ is $1$ for any point $R\in C$. This accounts for the inclusion of $l$ in the branch locus of $f$.
As the matrix of $F$ can be decomposed in two diagonal blocks, it is not difficult to see that, in this case, 
$\N_f = \OO_{l}(2) \oplus \mathcal{I}_{Z'}(4)$ 
where $Z'$ is the $0$-dimensional part of $Z$.

\end{example}
 
\subsection{Torsion of the normal sheaf}

We start recalling that the Eagon-Northcott complex associated to the map $\OO_{\PP^2}^3(1)\stackrel{F}\longrightarrow \OO^4_{\PP^2}(d)$ has the form
\begin{equation}\label{eq:eagnorth} 0\to \OO_{\PP^2}^3(1)\stackrel{F}\to \OO_{\PP^2}^4(d)\to \OO_{\PP^2}(4d-3)\to \OO_{Z}(4d-3)\to 0,
\end{equation} with the middle map in (\ref{eq:eagnorth}) defined by the $4$-tuple of $3\times 3$ minors of $F$. Here $Z$ is the subscheme defined  by the maximal minors.  Recall that (\ref{eq:eagnorth}) is in general only a complex, but that it is exact if the degeneracy locus has the expected dimension, in this case if $\dim Z=0$ (see \cite{G-P}).

Let $\N_f$ be the normal sheaf of the map $f:\PP^2\to \PP^3$ as in (\ref{eq:exactN1}) and (\ref{eq:exactN2}). Let us denote $\tau(\N_f)$ its torsion subsheaf and let $h_\N$ be the degree of the {\em divisorial part} of $\op{Supp}(\tau(\N_f))$, that is 
$$h_\N=c_1(\tau(N_f)).$$

We have the following result.
\begin{proposition}\label{prop:Ndecomp}
In the above situation there exists some $0$-dimensional subscheme $Z'\subset\PP^2$ such that $\N_f/\tau(\N_f)\cong \II_{Z'}(4d-3-h_\N)$.
\end{proposition}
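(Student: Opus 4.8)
The plan is to analyze the sheaf $\N_f$ through the exact sequence \eqref{eq:exactN2}, namely $0\to \OO_{\PP^2}^3(1)\stackrel{F}\to \OO^4_{\PP^2}(d)\to \N_f\to 0$, together with the Eagon--Northcott complex \eqref{eq:eagnorth}. First I would observe that $\N_f$ has rank $1$ on $\PP^2$, since $F$ has generically maximal rank $3$ (the map $f$ is dominant onto a surface, so $df$ is generically injective). Hence $\N_f/\tau(\N_f)$ is a rank $1$ torsion-free sheaf on the smooth surface $\PP^2$, and such a sheaf is of the form $\II_{Z'}(m)$ for some $0$-dimensional subscheme $Z'\subset\PP^2$ (possibly empty) and some integer $m$: indeed a rank $1$ torsion-free sheaf on a smooth surface is always contained in its double dual, which is a line bundle, with quotient supported in codimension $2$. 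So the content of the statement is the identification of the twist $m$ with $4d-3-h_\N$.

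To pin down $m$, I would compute the first Chern class of both sides. From \eqref{eq:exactN2} we get $c_1(\N_f)=c_1(\OO_{\PP^2}^4(d))-c_1(\OO_{\PP^2}^3(1))=4d-3$ (working with $c_1$ as an integer, i.e. the degree with respect to the hyperplane class). On the other hand, from the torsion exact sequence $0\to \tau(\N_f)\to \N_f\to \N_f/\tau(\N_f)\to 0$ we get $c_1(\N_f)=c_1(\tau(\N_f))+c_1(\N_f/\tau(\N_f))=h_\N + c_1(\II_{Z'}(m))$ by the definition $h_\N=c_1(\tau(\N_f))$. Since $Z'$ is $0$-dimensional, $c_1(\II_{Z'}(m))=c_1(\OO_{\PP^2}(m))=m$. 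Comparing, $4d-3 = h_\N + m$, hence $m=4d-3-h_\N$, which is the claim. The existence of $Z'$ and $m$ with $\N_f/\tau(\N_f)\cong\II_{Z'}(m)$ is the structural fact recalled above, and the Chern class computation then forces $m$ to have the stated value.

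The one point that needs a little care, and which I expect to be the main (mild) obstacle, is justifying that $c_1$ is additive in the relevant short exact sequences when the sheaves involved are not locally free — in particular $c_1(\tau(\N_f))=h_\N$ as an honest divisor class (the divisorial part of $\op{Supp}(\tau(\N_f))$, counted with multiplicities) and $c_1(\N_f/\tau(\N_f))=c_1((\N_f/\tau(\N_f))^{\vee\vee})$ since they differ in codimension $2$. Both are standard: on a smooth surface every coherent sheaf has a well-defined $c_1$ additive in short exact sequences (via finite locally free resolutions, which exist since $\PP^2$ is smooth), and for a rank $1$ torsion-free sheaf $\mathcal{F}$ one has $\mathcal{F}^{\vee\vee}\cong\OO_{\PP^2}(c_1(\mathcal{F}))$ with $\mathcal{F}\hookrightarrow\mathcal{F}^{\vee\vee}$ having $0$-dimensional cokernel. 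Applying this with $\mathcal{F}=\N_f/\tau(\N_f)$ produces the subscheme $Z'$ as the cokernel, and the computation above identifies the twist, completing the proof.
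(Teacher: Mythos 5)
Your proof is correct, but it reaches the conclusion by a genuinely different route than the paper. You establish the existence of $Z'$ abstractly: $\N_f/\tau(\N_f)$ is a rank $1$ torsion-free sheaf on the smooth surface $\PP^2$, hence embeds in its double dual, which is a line bundle $\OO_{\PP^2}(m)$, with $0$-dimensional cokernel; the twist $m=4d-3-h_\N$ then follows from additivity of $c_1$ in the two short exact sequences, exactly as in the paper's final step. The paper instead gets the existence concretely from the Eagon--Northcott complex: the composite $\OO_{\PP^2}^4(d)\to\OO_{\PP^2}(4d-3)$ given by the maximal minors of $F$ kills $\op{Im}(F)$, so it induces a surjection $\N_f\to\II_Z(4d-3)$ onto the twisted ideal of the full degeneracy scheme $Z=V(\text{maximal minors})$; factoring through the torsion-free quotient and comparing ranks shows $\N_f/\tau(\N_f)\cong\II_Z(4d-3)\cong\II_{Z'}(4d-3-h)$ with $Z'$ the residual $0$-dimensional part of $Z$ and $h$ the degree of its divisorial part, and the same $c_1$ count identifies $h=h_\N$. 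Your argument is shorter and self-contained (it does not even need the Eagon--Northcott complex), but it only produces \emph{some} $Z'$; the paper's version additionally identifies $Z'$ as the $0$-dimensional residual of the scheme cut out by the $3\times 3$ minors of $F$, and this concrete description is what gets used later (e.g.\ in Example \ref{ex:moncubic}, where $Z'=V(y^3,x^3,xy^2)$ is read off from the minors). One small point worth making explicit in your write-up: the image of $f$ is necessarily a surface (a non-constant morphism from $\PP^2$ cannot have $1$-dimensional image), which is what guarantees $F$ has generic rank $3$ and hence $\rk(\N_f)=1$.
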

\begin{proof} From the Eagon-Northcott complex (\ref{eq:eagnorth}) we have a map $\N_f\to \II_Z(4d-3)$, with $Z$ the degeneracy locus of $f$ endowed with the scheme structure defined by the maximal minors of $F$. As $\II_Z(4d-3)$ is torsion free, the map $\N_f\to \II_Z(4d-3)$ factors through $\N_f/\tau(\N_f)\to \II_Z(4d-3)$. Since the map $\OO_{\PP^2}^4(d)\to \II_{Z}(4d-3)$ induced by (\ref{eq:eagnorth}) is defined by the maximal minors of $F$ and it is surjective by the definition of $Z$, then also the map $\N_f/\tau(\N_f)\to \II_Z(4d-3)$ is surjective and, since $1=\rk(\N_f)=\rk(\N_f/\tau(\N_f))=\rk(\II_Z)$, the map $\N_f/\tau(\N_f)\to \II_Z(4d-3)$ is also injective, hence it is an isomorphism. Assume that the divisorial part of $Z$ has equation $H=0$, with $\deg H=h$, then one can write 
$$\N_f/\tau(\N_f)=\II_Z(4d-3)\cong \II_{Z'}(4d-3-h),$$
with $Z'$ a $0$-dimensional subscheme of $\PP^2$. To complete the proof of the proposition it is sufficient to show that $h=h_\N=c_1(\tau(\N_f))$. This is an immediate consequence of the exact sequences 
$$0\to\OO_{\PP^2}(1)^3\to\OO_{\PP^2}(d)^{4}\to \N_f\to 0$$
and

\begin{equation}\label{eq:NIZ'}
0\to \tau(\N_f)\to \N_f\to \II_{Z'}(4d-3-h)\to 0,
\end{equation}

\noindent which respectively imply $c_1(\N_f)=4d-3$ and $c_1(\N_f)=c_1(\tau(\N_f))+c_1(\II_{Z'}(4d-3-h))=h_\N+4d-3-h$, this last equality obtained by using that $\dim(Z')=0$. Hence $h_N=h$.  
\end{proof}

The following result relates the cohomology of $\N_f$ and that of $\II_{Z'}$. 
\begin{proposition}\label{prop:cohomcomp} Using notation as in section 4: for any $k\in \Z$ one has
\begin{eqnarray*}h^2\N_f(-k)&=&h^2\II_{Z'}(4d-3-h_\N-k)\\ &=&h^0\OO_{\PP^2}(k+h_\N-4d).\end{eqnarray*}
\end{proposition}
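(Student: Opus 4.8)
The plan is to read off both equalities from the short exact sequence (\ref{eq:NIZ'}), in which $h=h_\N$ by Proposition \ref{prop:Ndecomp}, together with the structure sequence of $Z'$ and Serre duality on $\PP^2$. First I would twist (\ref{eq:NIZ'}) by $\OO_{\PP^2}(-k)$ to get
$$0\to \tau(\N_f)(-k)\to \N_f(-k)\to \II_{Z'}(4d-3-h_\N-k)\to 0,$$
and pass to the long exact sequence in cohomology. Since $\tau(\N_f)$ is a torsion subsheaf of a sheaf on the integral surface $\PP^2$, its support is a proper closed subset, hence of dimension at most $1$; by Grothendieck's vanishing theorem $H^i(\PP^2,\tau(\N_f)(-k))=0$ for all $i\ge 2$. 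As $H^3$ also vanishes on the surface $\PP^2$, the long exact sequence collapses to an isomorphism $H^2(\N_f(-k))\cong H^2(\II_{Z'}(4d-3-h_\N-k))$, which is the first equality.

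For the second equality I would set $m:=4d-3-h_\N-k$ and use the ideal-sheaf sequence $0\to \II_{Z'}(m)\to \OO_{\PP^2}(m)\to \OO_{Z'}(m)\to 0$. Because $Z'$ is $0$-dimensional, $H^1(\OO_{Z'}(m))=H^2(\OO_{Z'}(m))=0$, so the long exact sequence yields $H^2(\II_{Z'}(m))\cong H^2(\OO_{\PP^2}(m))$. Serre duality on $\PP^2$ then gives $h^2(\OO_{\PP^2}(m))=h^0(\OO_{\PP^2}(-m-3))$, and substituting $-m-3=k+h_\N-4d$ produces the stated value.

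No serious obstacle is expected: the whole argument is a chase through three short exact sequences plus Serre duality. The only point deserving explicit mention is the vanishing $H^{\ge 2}(\PP^2,\tau(\N_f)(-k))=0$, which follows from the support of the torsion being at most a curve, consistent with the description in Proposition \ref{prop:Ndecomp} where its divisorial part has degree $h_\N$. It is also worth recording the convention $h^0(\OO_{\PP^2}(j))=0$ for $j<0$, so that the formula correctly returns $h^2\N_f(-k)=0$ whenever $k<4d-h_\N$, in agreement with $H^2(\OO_{\PP^2}(m))=0$ for $m\ge -2$.
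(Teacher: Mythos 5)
Your argument is correct and is essentially the paper's own proof: both use the sequence $0\to \tau(\N_f)(-k)\to \N_f(-k)\to \II_{Z'}(4d-3-h_\N-k)\to 0$ together with $\dim\op{Supp}(\tau(\N_f))\leq 1$ to identify the $H^2$'s, and then the structure sequence of the $0$-dimensional $Z'$ plus Serre duality to evaluate $h^2\II_{Z'}$. Your explicit appeal to Grothendieck vanishing and the sign conventions are just slightly more detailed versions of the same steps.
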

\begin{proof} By taking the long cohomology sequence of $$0\to \II_{Z'}(4d-3-h_\N-k)\to \OO_{\PP^2}(4d-3-h_\N-k)\to\OO_{Z'}(4d-3-h_\N-k)\to 0$$ and using $\dim Z'=0$ one obtains
$$h^2\II_{Z'}(4d-3-h_\N-k)=h^2\OO_{\PP^2}(4d-3-h_\N-k)=h^0\OO_{\PP^2}(k+h_N-4d),$$ the last equality being due to Serre duality. By the exact sequence
$$0\to \tau(\N_f)(-k)\to \N_f(-k)\to \II_{Z'}(4d-3-h_\N-k)\to 0,$$ due to Proposition \ref{prop:Ndecomp}, and using the fact that $\dim\op{Supp}(\tau(\N_f))\leq 1$, one obtains $$h^2\N_f(-k)=h^2\II_{Z'}(4d-3-h_\N-k).$$
\end{proof}

In particular, we can state the following result.
\begin{corollary} \label{cor:h}The function $h^2\N_f(-k)$ determines the degree $h_\N$ of the divisorial part of the branch locus of $f$.
\end{corollary}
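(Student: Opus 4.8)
The plan is to read $h_\N$ off directly from the explicit formula for $h^2\N_f(-k)$ obtained in Proposition \ref{prop:cohomcomp}, which gives, for every $k\in\Z$,
$$h^2\N_f(-k)=h^0\OO_{\PP^2}(k+h_\N-4d)=\binom{k+h_\N-4d+2}{2}$$
when $k+h_\N-4d\geq 0$, and $h^2\N_f(-k)=0$ otherwise. Thus, as a function of $k$, the map $k\mapsto h^2\N_f(-k)$ is the fixed function $m\mapsto \max\{0,\binom{m+2}{2}\}$ precomposed with the translation $k\mapsto k+h_\N-4d$: it vanishes for $k$ small, is eventually a degree-$2$ polynomial in $k$, and the position where it stops being zero records the amount of the translation.

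First I would isolate that position. Set $k_0:=\min\{k\in\Z : h^2\N_f(-k)\neq 0\}$; this is well defined since the values are non-negative, vanish for $k\ll 0$, and are positive for $k\gg 0$. From the formula, $h^2\N_f(-k)\neq 0$ if and only if $k+h_\N-4d\geq 0$, so $k_0=4d-h_\N$, i.e.
$$h_\N=4d-k_0.$$
Equivalently one may use that the unique $k$ with $h^2\N_f(-k)=1$ is the one with $k+h_\N-4d=0$, and again conclude $h_\N=4d-k$. In either formulation, $k_0$ (respectively this distinguished $k$) is intrinsically determined by the function $h^2\N_f(-k)$.

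Finally I would note that $d$ is part of the datum of $f$: by hypothesis $f\colon\PP^2\to\PP^3$ is defined by homogeneous polynomials of degree $d$, so $d$ is known and hence $h_\N=4d-k_0$ is determined by the function $h^2\N_f(-k)$. (If one wishes to recover everything from the function alone, $d$ can in addition be read off, for instance from the normalization of the eventual degree-$2$ polynomial together with $c_1(\N_f)=4d-3$ coming from the presentation (\ref{eq:exactN2}), but this refinement is not needed for the statement as phrased.) I do not expect any real obstacle here: the substance of the corollary is entirely contained in Proposition \ref{prop:cohomcomp}, and the only point to make is the elementary observation that a translate of the Hilbert function of $\PP^2$ remembers the translation by which it was obtained.
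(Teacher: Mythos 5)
Your argument is correct and is exactly the route the paper intends: the corollary is stated as an immediate consequence of Proposition \ref{prop:cohomcomp}, and the paper itself later records the same observation in the form $4d-h_\N-1=\max(k\ |\ h^2\N_f(-k)=0)$, which matches your $h_\N=4d-k_0$. Your remark that $d$ is part of the data of $f$ (or recoverable from the asymptotics) is a reasonable point of care that the paper leaves implicit.
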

Recall also that from the exact sequence (\ref{eq:exactN2}) one obtains the exact sequence
$$0\to H^1\N_f(-k)\to H^2\OO_{\PP^2}^3(1-k)\stackrel{F}\longrightarrow H^2\OO_{\PP^2}^4(d-k)\to H^2\N_f(-k)\to 0$$
and hence 
\begin{eqnarray*} h^2\N_f(-k)&=&4h^2\OO_{\PP^2}(d-k)-3h^2\OO_{\PP^2}(1-k)+h^1\N_f(-k)\\
&=& 4h^0\OO_{\PP^2}(k-d-3)-3h^0\OO_{\PP^2}(k-4)+h^1\N_f(-k).
\end{eqnarray*}

Then, from Theorem \ref{teo normale}, it follows immediately 
\begin{corollary}\label{cor formula h^2}
By using the same notation as in section 4: for any $k\geq d+4$ one has 
\begin{eqnarray*}h^2\N_f(-k)&=&4{k-d-1\choose 2}-3{k-2\choose 2}\\
&&+ \dim\left( (S^{\chi }U \otimes T) \cap [{\displaystyle\bigcap _{i,j=0,1,2;r,s=0,1,2}}\ker (D_{i ,j} \circ D_{r ,s})] \right),\end{eqnarray*}
where $\chi=k -d - 3$.
\end{corollary}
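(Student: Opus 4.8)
The plan is to combine the Euler-sequence computation of $h^2\N_f(-k)$ already recorded just above the corollary with the explicit description of $H^{n-1}(\N_f(-k)) = H^1(\N_f(-k))$ provided by Theorem \ref{teo normale} in the case $n=2$. Recall that from the presentation (\ref{eq:exactN2}) twisted by $\OO_{\PP^2}(-k)$ one obtains the four-term exact sequence displayed before the corollary, which yields
$$h^2\N_f(-k) = 4h^0\OO_{\PP^2}(k-d-3) - 3h^0\OO_{\PP^2}(k-4) + h^1\N_f(-k).$$
So the first step is purely bookkeeping: for $k\geq d+4$ we have $k-d-3\geq 1$ and $k-4\geq d\geq 1$ (since $d\geq 2$), hence both global-section spaces are nonzero and $h^0\OO_{\PP^2}(m) = \binom{m+2}{2}$. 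Substituting $m=k-d-3$ and $m=k-4$ gives the binomial coefficients $\binom{k-d-1}{2}$ and $\binom{k-2}{2}$ appearing in the statement.

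The second step is to identify the term $h^1\N_f(-k)$. Here I invoke Theorem \ref{teo normale} with $n=2$: setting $\chi := k-d-n-1 = k-d-3$, which is $\geq 1$ precisely because $k\geq d+4$, the theorem gives
$$H^{n-1}(\N_f(-k)) = H^1(\N_f(-k)) = (S^{\chi}U\otimes T)\cap\Bigl(\bigcap_{i,j=0\ldots n;\, r,s=0\ldots n}\ker(D_{i,j}\circ D_{r,s})\Bigr).$$
Since $n=2$, the indices $i,j,r,s$ range over $\{0,1,2\}$, which matches the intersection written in the corollary. Taking dimensions of both sides and feeding this into the displayed formula for $h^2\N_f(-k)$ completes the proof.

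There is essentially no obstacle here: the corollary is an immediate consequence of Theorem \ref{teo normale} together with the already-established Euler-sequence identity, and the only thing to check carefully is that the hypothesis $k\geq d+4 = d+n+1$ (for $n=2$) is exactly the condition $\chi\geq 1$ needed to be in the first case of Theorem \ref{teo normale}, and simultaneously guarantees that the two $h^0$ terms are given by the stated binomial coefficients rather than vanishing. Thus the main (and only) "work" is matching up the range of validity and substituting the dimension formula for global sections of line bundles on $\PP^2$.
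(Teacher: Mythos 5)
Your proposal is correct and follows exactly the paper's route: the four-term cohomology sequence displayed just before the corollary gives $h^2\N_f(-k)=4h^0\OO_{\PP^2}(k-d-3)-3h^0\OO_{\PP^2}(k-4)+h^1\N_f(-k)$, the $h^0$ terms become the stated binomials, and Theorem \ref{teo normale} with $n=2$ (where $k\geq d+4$ is precisely $\chi\geq 1$) supplies the last summand. Nothing is missing.
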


\begin{remark} It is well known that a finite morphism $f\colon X\to Y$ between smooth projective varieties of dimension $n$ has a pure $(n-1)$-dimensional branch locus, i.e. a divisor, whose first Chern class is $c_1(f^\ast K_Y^{-1}\otimes K_X)$. This is no longer true when $\dim X\not=\dim Y$, as in the cases  $f\colon\PP^2\to \PP^3$ taken in consideration. As we already saw in Example \ref{example:quadric}, the branch locus of $f$ is in general not pure, it may very well have components of different dimensions. In the next example we will show that it can even have embedded components. Nevertheless, it may be useful to can calculate some invariants of the branch locus of $f$. The above corollary says something on the divisorial part of that locus. When $\chi \leq 1$ the value of $h^2\N_f(-k)$ calculated by Theorem \ref{teo normale} depends only by the dimension of $T$ or by the cohomology of the first two bundles of (\ref{eq:exactTN}), hence it cannot depend on the branch locus of $f$, these values are in fact those predicted by the Eagon-Northcott complex associated to $F$. Hence the branch locus can be characterized by the values of $h^2\N_f(-k)$ only when $\chi \geq 2$.
\end{remark}

\begin{remark} If we consider a generalization of Example \ref{example:quadric} taking $f(x:y:u)=(x^2:y^2:u^2:p_2(x:y:u))$, where $p_2$ is a generic degree two homogeneous polynomial, then the branch locus $Z$ consists of $6$ points not in general position; these points belong to the cubic $xyu = 0 $. In this case $dim(Z) = 0$, the Eagon-Northcott complex is exact and the cohomology of $\N_f  = \mathcal{I}_Z(5)$ can be easily computed.

\end{remark}

\begin{example}\label{ex:moncubic} Consider $f\colon \PP^2\to \PP^3$ defined by $$f(x:y:u)=(x^3:y^3:u^3:3x^2y).$$
Then $d = 3$, $dim(T) = 7$, the homogeneous Jacobian is $$F=3\left(\begin{array}{lll} x^2&0&0\\ 0&y^2&0\\0&0&u^2\\ 2xy&x^2&0\end{array}\right)$$
and the row of the maximal minors defining the map $\OO_{\PP^2}^4(3)\to \OO_{\PP^2}(9)$ in (\ref{eq:eagnorth}) is 
$$(2xy^3u^2,x^4u^2,0,-x^2y^2u^2)=xu^2(2y^3,x^3,0,-xy^2).$$ 

Note that, if we slightly modify the example taking $f(x:y:u)=(x^3:y^3:u^3:3xyu)$ (or $f(x:y:u)=(x^3:y^3:u^3:p_{3}(x:y:u))$ with $p_3$ generic degree three homogeneous polynomial), then $dim(Z) = 0$ and the Eagon-Northcott complex is exact; on the contrary this is not the case in this example. Note also that $F$ can be decomposed into two diagonal blocks, but now this fact does not help in dividing $Z$ as the union of disjoint components of different dimensions, as in Example \ref{example:quadric}, because there are embedded components. This remark could be used to simplify the following diagram, but not in a substantial way. In fact
in this example $9=4d-3$ and the divisorial part of the subscheme $Z$ defined by the maximal minors is $V(xu^2)$, consisting in the divisor $L+2M$ with $L=\op{div}(x)$ a reduced line and $2M=\op{div}(u^2)$ a double line. Moreover $Z'=V(y^3,x^3,xy^2)$, a multiple point supported on the reduced line $L$, is an embedded primary component of $Z$.
Proposition \ref{prop:Ndecomp} predicts that $c_1(\tau(\N))=h_\N=3$ and that there exists an exact sequence $0\to \tau(\N_f)\to \N_f\to \II_{Z'}(6)\to 0$.

Indeed we have a commutative diagram where the vertical complexes are exact:
\[\begin{CD} 0@>>> \OO_{\PP^2}^3(1)@>G>> \OO_{\PP^2}(2)\oplus\OO_{\PP^2}(1)\oplus\OO_{\PP^2}(3)@>>> \OO_L(2)\oplus\OO_{2M}(3)@>>>0\\
&&@V\op{id}VV @VHVV @V\alpha VV\\
0@>>> \OO_{\PP^2}^3(1)@>F>> \OO_{\PP^2}^4(3)@>>> \N_f@>>>0\\
&&@VVV@VVV@VVV\\
&&0 @>>>\op{coker}(H)@>\cong>> \op{coker}(\alpha)@>>> 0
\end{CD}
\]
with $$G={\left(\begin{array}{lll} x&0&0\\0&1&0\\0&0&u^2\end{array}\right)},\quad H={\left(\begin{array}{lll} x&0&0\\0&y^2&0\\0&0&1\\ 2y&x^2&0\end{array}\right)}.$$
Now the complex 
$$0 \to \OO_{\PP^2}(2)\oplus\OO_{\PP^2}(1)\oplus\OO_{\PP^2}(3) \stackrel{H}\longrightarrow \OO_{\PP^2}^4(3)\stackrel{(2y^3,x^3,0,xy^2)}\longrightarrow \OO_{\PP^2}(6)\to\OO_{Z'}(6)\to 0
$$
is the Eagon-Northcott complex associated to $H$ and it is exact, since $Z'$ is $0$-dimensional. Hence $\op{coker}(\alpha)\cong \op{coker}(H)=\II_{Z'}(6)$ and therefore the rightmost column in the diagram above can be identified with $$0\to \tau(\N_f)\to \N_f\to \N_f/\tau(\N_f)\to 0.$$ In particular we see $$\tau(\N_f)\cong \OO_L(2)\oplus\OO_{2M}(3),$$ which verifies $c_1(\tau(\N_f))=c_1(\OO_L(2)\oplus\OO_{2M}(3))=3$. Moreover $deg(Z') = 7$; this number can be calculated directly from the equations of $Z'$ by a computer algebra system as Macaulay or from the cohomolgy of $\II_{Z'}(6)$, computed by using (\ref{eq:NIZ'}) and the cohomology of $\N_f$ computed as in Theorem \ref{teo normale}.
\end{example}

\subsection{Numerical bounds}

In this last subsection we give some numerical bounds for the degree $h_\N$ of the divisorial part of $Z$ and for the degree of its 0-dimensional part $Z'$. First of all note that the integer $h_\N$ can be easily bounded:  

\begin{proposition} \label{prop: h_N} By using the above notation we have: $0\leq h_\N\leq 3(d-1)$.
\end{proposition}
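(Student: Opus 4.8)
The plan is to reduce the estimate to an elementary degree count on the maximal minors of the Jacobian matrix $F=\bigl(\partial f_i/\partial x_j\bigr)$ appearing in the presentation (\ref{eq:exactN2}).

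First I would invoke Proposition \ref{prop:Ndecomp}, which already does the conceptual work: it identifies $h_\N=c_1(\tau(\N_f))$ with the degree $h=\deg H$ of the divisorial part $V(H)$ of the subscheme $Z$, where $Z=V(I)$ and $I$ is the ideal of $\PP^2$ generated by the four maximal ($3\times 3$) minors $m_0,\dots,m_3$ of $F$ — this being exactly the subscheme that enters the Eagon--Northcott complex (\ref{eq:eagnorth}). So it suffices to bound $\deg H$ from above and below.

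Next I would use that the divisorial part of a closed subscheme $V(I)\subset\PP^2$ is cut out by $H=\gcd(m_0,\dots,m_3)$, so in particular $H$ divides each $m_i$. Under the standing assumption of this section that $Z$ is a proper subscheme (equivalently $\dim Z\le 1$, so that the $0$-dimensional part $Z'$ makes sense) at least one $m_i$ is nonzero, hence $\deg H\le\deg m_i$ for any such $i$. Now each $m_i$ is a determinant of a $3\times 3$ submatrix of $F$, whose entries $\partial f_i/\partial x_j$ are homogeneous of degree $d-1$; therefore $\deg m_i=3(d-1)$ whenever $m_i\ne 0$. Combining, $h_\N=\deg H\le 3(d-1)$, while $h_\N=\deg H\ge 0$ is immediate since $H$ is a polynomial (possibly constant). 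This gives $0\le h_\N\le 3(d-1)$.

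There is no genuine obstacle here; the only point that needs a word of care is the degenerate situation in which all four minors vanish identically — that is, $df$ has rank $\le 1$ everywhere and $f(\PP^2)$ is at most a curve, so $Z=\PP^2$ and the notion of ``divisorial part of $Z$'' is vacuous. As throughout this section, this case is excluded from consideration, so the argument above applies.
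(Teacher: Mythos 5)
Your proof is correct and follows essentially the same route as the paper, which simply observes that $Z$ is cut out by the $3\times 3$ minors of $F$ (each of degree $3(d-1)$ when nonzero), so the degree of the divisorial part cannot exceed this. Your version merely spells out the intermediate steps (the identification $h_\N=\deg H$ via Proposition \ref{prop:Ndecomp}, the fact that $H$ divides each minor, and the exclusion of the degenerate case where all minors vanish), which is a harmless and slightly more careful rendering of the same argument.
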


\begin{proof} $Z$ is defined by the $3\times 3$ minors of $F$, hence $h_\N$ cannot exceed the degree of such a minor.
\end{proof}

In Example \ref{example:quadric} one has $h_\N=1$, while $3d-3=3$.  In the following Proposition \ref{prop:d=2} we will show that indeed for $d=2$ the value $h_\N=1$ is the maximum possible value, by showing that, in that case, Example \ref{example:quadric} is the only possibility for $f:\PP^2\to\PP^3$ defined by quadratic polynomials and $h_\N>0$, up to the natural action of $PGL(3)\times PGL(4)$. In Example \ref{ex:moncubic} we have $h_\N=3$ and in that case $3d-3=6$. We currently do not know whether $h_\N=3$ is the maximum possible value when $f:\PP^2\to \PP^3$ is defined by cubic polynomials. This facts leads to the following

\begin{problem} Find an explicit better bound for $h_\N$.
\end{problem}

\begin{proposition} \label{prop:d=2} Let $f:\PP^2\to \PP^3$ be defined by quadratic polynomials and such that $f(\PP^2)$ is a quadric surface $S$, then the branch locus of $f$ has dimension $1$, and $f$ is $PGL(3)\times PGL(4)$-equivalent to the morphism described in Example \ref{example:quadric}. In particular the maximum possible value for $h_\N$ is $1$ for $d=2$.
\end{proposition}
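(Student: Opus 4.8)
The plan is to reduce the statement to a normal form for the four quadrics defining $f$, and then to read off the remaining claims from Example~\ref{example:quadric}. Write $f=(f_0:f_1:f_2:f_3)$ with $f_0,\dots,f_3\in\C[x_0,x_1,x_2]$ homogeneous of degree $2$; these forms are linearly independent (otherwise $f(\PP^2)$ would lie in a hyperplane and could not be a surface of degree $2$) and have no common zero, since $f$ is a morphism. Because $\PP^2$ is irreducible, $S=f(\PP^2)$ is a reduced irreducible surface of degree $2$, hence $S=V(q)$ for an irreducible quadratic form $q$ in the coordinates of $\PP^3$, of rank $3$ or $4$; and the condition $S=V(q)$ is equivalent to the single quartic identity $q(f_0,f_1,f_2,f_3)=0$, an equality in the polynomial ring $\C[x_0,x_1,x_2]$, which is a UFD.

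First I would rule out $\rk q=4$. After a change of homogeneous coordinates on $\PP^3$ (a $PGL(4)$ move) the identity becomes $f_0f_1=f_2f_3$. Comparing factorizations into irreducibles, and using that each $f_i$ has degree $2$ and that $\gcd(f_0,f_1,f_2,f_3)=1$, one checks that the only possibilities are: (a) two of the $f_i$ are proportional; or (b) after rescaling $f_0=\ell\ell'$, $f_1=\ell''\ell'''$, $f_2=\ell\ell''$, $f_3=\ell'\ell'''$ for some linear forms $\ell,\ell',\ell'',\ell'''$. Case (a) is excluded by linear independence of the $f_i$. In case (b), a direct inspection of $V(f_0,f_1,f_2,f_3)$ shows that, in each of the subcases not already excluded by linear independence, two of the four lines $V(\ell),V(\ell'),V(\ell''),V(\ell''')$ meet at a point which is a common zero of all four $f_i$, contradicting the base-point-freeness of $f$. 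Hence $\rk q=4$ cannot occur, so $q$ has rank $3$ and $S$ is a quadric cone.

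Assume now $\rk q=3$. After $PGL(4)$ the identity reads $f_0f_1=f_2^2$, so that $f_0f_1$ is a perfect square in $\C[x_0,x_1,x_2]$. From $\gcd(f_0,f_1,f_2)=1$ one first deduces $\gcd(f_0,f_1)=1$, and then, comparing prime factorizations again, $f_0=\ell_0^2$, $f_1=\ell_1^2$, $f_2=\ell_0\ell_1$ after rescaling, with $\ell_0,\ell_1$ linearly independent linear forms. Taking coordinates $x_0=\ell_0$, $x_1=\ell_1$, $x_2$ on $\PP^2$ (a $PGL(3)$ move) gives $f=(x_0^2:x_1^2:x_0x_1:f_3)$. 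Since $(0:0:1)$ is the unique common zero of $x_0^2,x_1^2,x_0x_1$, base-point-freeness forces the coefficient of $x_2^2$ in $f_3$ to be nonzero, so after rescaling $f_3=x_2^2+x_2\ell+q_0$ with $\ell,q_0\in\C[x_0,x_1]$ homogeneous of degrees $1$ and $2$. Subtracting $q_0\in\langle x_0^2,x_0x_1,x_1^2\rangle=\langle f_0,f_1,f_2\rangle$ from $f_3$ (a $PGL(4)$ move), then applying the coordinate change $x_2\mapsto x_2-\tfrac12\ell$ (a $PGL(3)$ move fixing $x_0,x_1$, hence $f_0,f_1,f_2$), and finally subtracting from $f_3$ the resulting multiple of $\ell^2$, which again lies in $\langle f_0,f_1,f_2\rangle$, one reduces to $f_3=x_2^2$, i.e. $f=(x_0^2:x_1^2:x_0x_1:x_2^2)$. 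Up to a permutation and rescaling of the coordinates of $\PP^3$ this is precisely the morphism of Example~\ref{example:quadric}, which establishes the asserted $PGL(3)\times PGL(4)$-equivalence.

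It remains to collect the consequences. From the computation in Example~\ref{example:quadric}, the branch locus of that morphism is $Z=l\cup P^{(2)}$ with $l$ a line, so $\dim Z=1$; and $\N_f\cong\OO_l(2)\oplus\II_{Z'}(4)$, whence $\tau(\N_f)\cong\OO_l(2)$ and $h_\N=c_1(\tau(\N_f))=1$. Since, up to equivalence, this is the only $f:\PP^2\to\PP^3$ given by quadratic polynomials whose image is a quadric surface, and any other such $f$ is birational onto a quartic surface, a case in which one checks that the degeneracy locus is supported over finitely many points, so $\dim Z=0$ and $h_\N=0$, the value $h_\N=1$ is the maximal one occurring for $d=2$. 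The step I expect to be the real obstacle is the exclusion of $\rk q=4$: one must check with some care that every factorization pattern compatible with $f_0f_1=f_2f_3$ is incompatible either with the linear independence of the $f_i$ or with $f$ being a morphism, which in the "products of linear forms" subcases amounts to locating a base point explicitly; the rest is routine linear algebra and coordinate bookkeeping.
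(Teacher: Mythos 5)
Your proof is correct, but it takes a genuinely different route from the paper's. The paper argues geometrically: writing $f=\pi_L\circ\nu_2$ for a line $L\subset\PP^5$, it studies the position of $L$ relative to $W=Sec(\nu_2(\PP^2))=Tan(\nu_2(\PP^2))$ (the discriminant cubic of symmetric $3\times 3$ matrices), shows via a count of secant lines through a point of $W$ that a degree-$2$ map forces $L\subset W$ with $L\cap\nu_2(\PP^2)=\varnothing$, and then invokes the classification of base-point-free pencils of rank-$2$ conics (a fixed line plus a pencil of lines through a point off it) to land on Example \ref{example:quadric}. You instead work entirely in the UFD $\C[x_0,x_1,x_2]$ with the identity $q(f_0,\dots,f_3)=0$, splitting on $\rk q$: the factorization analysis of $f_0f_1=f_2f_3$ correctly excludes the smooth quadric (every admissible factorization pattern forces either two proportional $f_i$ or a base point at the intersection of two of the linear factors), and the rank-$3$ case $f_0f_1=f_2^2$ with $\gcd(f_0,f_1)=1$ yields $f_0=\ell_0^2$, $f_1=\ell_1^2$, $f_2=\ell_0\ell_1$, after which your normalization of $f_3$ to $x_2^2$ by column operations and the substitution $x_2\mapsto x_2-\ell/2$ is routine and correct. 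Your approach is more elementary and self-contained (no secant-variety geometry), and it makes the exclusion of the smooth-quadric image completely explicit, whereas in the paper this is absorbed into the classification of pencils of singular conics. What the paper's viewpoint buys is the last step: when the image is a quartic (your "any other such $f$"), the finiteness of the branch locus is immediate there because the non-immersive locus is controlled by $L\cap Tan(\nu_2(\PP^2))$, which is at most three points when $L\not\subset W$; in your setup this finiteness is only asserted with "one checks", so if you want the "in particular" clause to be fully self-contained you should supply that check (e.g., by this tangent-variety count, or by showing no linear form can divide all $3\times 3$ minors of $F$ in the birational case). Note also that both your argument and the paper's implicitly restrict to the paper's standing convention that the four quadrics are linearly independent; without it, maps such as $(x^2:y^2:u^2:x^2+y^2)$ onto a plane have $h_\N=3$.
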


\begin{proof}
Let us consider an arbitrary morphism $f\colon\PP^2\to \PP^3$ defined by homogeneous forms $f_0,\ldots,f_3$ of degree $2$. Writing as usual $f=\pi_L\circ\nu_2$, where $L$ is a suitable line in $\PP^5$. In order to study the branch locus of $f$, we need to consider how $L$ can intersect the secant variety of $\nu_2(\PP^2)$. We recall that $W:=Sec(\nu_2(\PP^2))=Tan(\nu_2(\PP^2))$ and it is  a cubic hypersurface in $\PP^5$, with defining polynomial the determinant of the general symmetric $3\times 3$ matrix and with singular locus equal to $\nu_2(\PP^2)$. 

As $f$ is a morphism, $L$ cannot intersect $\nu_2(\PP^2)$.  As the degree of $f$ must be $2$, for any generic point $P$ of $S$ the plane $\langle L,P \rangle$ must intersect $\nu_2(\PP^2)$ at two distinct points, so that, for any $P$, there exists a secant line of $\nu_2(\PP^2)$ cutting $L$. These secant lines cannot cut $L$ at a finite number of points (at most three, of course) because there are only a simple infinity of secant lines to $\nu_2(\PP^2)$ passing through any point $w$ of $W$, $w$ not in $\nu_2(\PP^2)$. Proof of this claim: in a suitable coordinate system $w$ has equation $xy=0$, it belongs only to the secant lines of $\nu_2(\PP^2)$ given by pairs of double lines passing through $(0:0:1)$, these lines correspond to pencils of conics of the following type: $\lambda(ax+by)^2+\mu(cx+dy)^2$, the pencil contains $xy=0$ if and only if $(ad)^2-(bc)^2=0$ and this is only one equation in $\PP^1 \times \PP^1$: the claim is proved.

Therefore $L$ is contained in $W$, $L$ does not intersect $\nu_2(\PP^2)$ and it corresponds to a pencil of singular conics: it is well known that there exists only one possibility: the conics of the pencil are given by a fixed line $l$ and a variable line in a pencil of lines whose center is not on $l$. By choosing suitably the coordinate system in $\PP^2$ we have: $L=\langle xu,yu \rangle$, and by choosing suitably a coordinate system in the target $\PP^3$ we have that $f$ is exactly as in Example \ref{example:quadric}.

Final step: if $L$ is not contained in $W$ then $deg(S)=4$ and it is easy to see that, in this case, $h_\N=0$. If $L$ is contained in $W$, as $L$ cannot intersect $\nu_2(\PP^2)$, the above argument shows that $f$ is exactly as in Example \ref{example:quadric}. There are no other possibilities, hence, in our assunptions, $h_\N$ is at most $1$ in any case.
\end{proof}

\begin{remark}  From Proposition \ref{prop:cohomcomp} we see that 
$$h^2\N(-k)=\left\{\begin{array}{lll} {k+h_\N-4d+2\choose 2} &{\rm for}& k+ h_\N\geq 4d\\
\\
h^2\N(-k)=0 &{\rm for}& k+h_\N<4d.\end{array}\right.$$
It follows that $4d-h_\N-1=\max(k\ |\ h^2\N_f(-k)=0)$. Recall also from Theorem  \ref{teo normale} and its proof that $h^2\N_f(-t)=0$ is equivalent to say that the map $$H^2\OO_{\PP^2}^3(1-t)\to H^2\OO_{\PP^2}^4(d-t)$$ is surjective,
equivalently that the map
$$\mu  :U \otimes S^{k -n -2}U \rightarrow (T^{\perp })^{ \vee } \otimes S^{k -d -3}U$$
is surjective.
This argument, or similar ones based on the formulas above, open the possibility to use the results of the previous sections to study the Problem above.
  
\end{remark}

As for the degree of $Z'$, for any map $f$ we can argue as in Example \ref{ex:moncubic} and write a commutative diagram as follows:

\[\begin{CD} 
0@>>> \OO_{\PP^2}^3(1)@>G>> \mathcal{E} @>>> \tau(\N_f)@>>>0\\
&&@V\op{id}VV @VHVV @V\alpha VV\\
0@>>> \OO_{\PP^2}^3(1)@>F>> \OO_{\PP^2}^4(d)@>>> \N_f@>>>0\\
&&@VVV@VVV@VVV\\
&&0 @>>> \mathcal{I}_{Z'}(4d-3-h_\N)@>\cong>> \mathcal{I}_{Z'}(4d-3-h_\N) @>>> 0
\end{CD}
\]

\noindent where $ \mathcal{E}$ is a suitable sheaf over $\PP^2$ and the rows and columns in the diagram are exact. 
By looking at the Chern classes of the sheaves involved by the diagram (here the Chern classes are identified with integers) and assuming to know $h_\N = c_1(\tau(\N_f))$ by the previous arguments we have:

$c_1( \mathcal{E}) = 3+h_\N$

$c_2(\OO_{\PP^2}^4(d)) = 6d^2 = c_2( \mathcal{E}) + c_2( \mathcal{I}_Z'(4d-3-h_\N)) + c_1( \mathcal{E})c_1(I_{Z'}(4d-3-h_\N)) = $

$ =  c_2( \mathcal{E}) + deg(Z') + (3 + h_\N)(4d-3-h_N).$

Unfortunately, we cannot say that $c_2( \mathcal{E}) \geq 0$ as in Example \ref{ex:moncubic}, so
that we have not a bound for $\deg(Z')$ from the above equations. However, see the following
remark, we can estimate $\deg(Z')$ by the cohomology of $\N_f$ as we have done for $h_\N$ in
Corollary \ref{cor:h}.

\begin{remark}  \label{remZ'} If we define $q:=4d-3-h_\N =d+\eta$ we have the following exact sequence from the above diagram:
$$0\to \tau(\N_f)(-q)\to \N_f(-q)\to \mathcal{I}_Z'\to 0$$

\noindent and we have:
$$deg(Z')=h^1(\mathcal{I}_{Z'})+1=1+h^1(\N_f(-q))-h^1(\tau(\N_f)(-q)) \leq 1+h^1(\N_f(-q)).$$

Note that our method is useful only when $q\geq d+5$ ($\eta \geq 5$), otherwise the calculation of $h^1(\N_f(-q))$ is immediate from \ref{eq:exactN2}.
  
\end{remark}

\subsection*{Acknowledgements}We thank the anonymous referee for her/his detailed and helpful remarks on the first submitted version of this article.

\end{document}